\documentclass[11pt]{amsart}
\usepackage{amsmath}
\usepackage{amsthm}
\usepackage{amscd}
\usepackage{amssymb}
\usepackage{amsfonts}
\usepackage{graphics}
\usepackage{color}
\usepackage{comment}
\usepackage{verbatim}
\usepackage{enumitem}
\usepackage[dvipsnames]{xcolor}
\usepackage[all,cmtip]{xy}
\usepackage[normalem]{ulem}
\usepackage{cancel}

\usepackage{hyperref}

\theoremstyle{plain}
\newtheorem{theorem}{Theorem}[section]
\newtheorem{proposition}[theorem]{Proposition}
\newtheorem{lemma}[theorem]{Lemma}

\newtheorem{corollary}[theorem]{Corollary}
\newtheorem{remark}[theorem]{Remark}

\newcommand{\wmns}{ W_{\mu_{N,s}}(\mathbb{R}^N)}
\newcommand{\W}{W_+}

\newcommand{\mG}{\mathcal G}

\newcommand{\J}{\mathcal{J}_{N,s,T}}

\newcommand{\intrn}{\int_{\mathbb{R}^N}}

\newcommand{\dis}{\displaystyle}
\newcommand{\R}{\mathbb{R}}

\theoremstyle{definition}
\newtheorem{definition}[theorem]{Definition}

\title[Extremals for singular Onofri inequality in weighted Sobolev spaces]{Extremals for singular Onofri type inequality in weighted Sobolev spaces}

\author[N. Borgia]{Natalino Borgia}
\address{Dipartimento di  Matematica  \\ Universit\`{a} degli Studi di Bari Aldo Moro \\ Via Orabona 4\\ 70125 Bari, Italy}
\email{natalino.borgia@uniba.it}
\author[S. Cingolani]{Silvia Cingolani}
\address{Dipartimento di  Matematica  \\ Universit\`{a} degli Studi di Bari Aldo Moro \\ Via Orabona 4\\ 70125 Bari, Italy}
\email{silvia.cingolani@uniba.it}
\author{Rub\'en Fi\~nana}
\address{%
	Department of Mathematics\\ University of Almer\'ia\\
	Ctra. Sacramento s/n, La Ca\~nada de San Urbano\\
	Almer\'ia 04120\\
	Spain}
\email{rfa803@ual.es}
\author[G. Mancini]{Gabriele Mancini}
\address{Dipartimento di  Matematica  \\ Universit\`{a} degli Studi di Bari Aldo Moro \\ Via Orabona 4\\ 70125 Bari, Italy}
\email{gabriele.mancini@uniba.it}

\begin{document}
	\begin{abstract}
		The $N$-dimensional Euclidean Onofri inequality was proved by Del Pino and Dolbeault \cite{dPinDol-2013} for smooth compactly supported functions in $\mathbb{R}^N$ with $N \ge 2$. In this paper, we consider the singular version introduced in \cite{LamLu-2018}, and extend it to a natural weighted Sobolev space. Next, we establish a sharp singular Carleson-Chang type inequality and prove its equivalence with the singular $N$-dimensional Euclidean Onofri inequality. Furthermore, we prove the existence of nontrivial, non-constant optimizers for the functional associated with the singular Euclidean Onofri inequality, answering open questions left by Del Pino and Dolbeault in \cite{dPinDol-2013} in the non-singular case, and by Lam and Lu in \cite{LamLu-2018} in the case with singularities. To this end, we exploit the classification result obtained in \cite{CirEspLi-2026} for a singular Liouville-type equation involving the $N$-Laplacian. We also prove the conformal invariance of the functional, establish the sharpness of the singular $N$-dimensional Euclidean Onofri inequality in terms of optimal constants, and state several consequences, including a sharp Poincaré inequality.

	\end{abstract}
	
	\keywords{$N-$Euclidean Onofri inequality, logarithmic Moser-Trudinger inequality, sharp constants,  weighted Sobolev spaces, higher dimensions, density, Liouville equation, Quasilinear PDE, Classification.}
	
	\subjclass[2020]{26D10, 35B06, 35B08, 35J92, 46E30, 46E35, 58E35}
	
	\date{\today}

	\maketitle

	\tableofcontents
	
	\section{Introduction}

	In \cite{dPinDol-2013} Del Pino and Dolbeault obtained the following $N$-dimensional Euclidean Onofri inequality for any $u \in C_0^{\infty}(\mathbb{R}^N)$:
	\begin{equation}\label{EuclideanOnofriNdimensional}
		\displaystyle  \ln \left( \int_{\mathbb{R}^N} e^u \, d\mu_N \right) \leq  \frac{1}{\widetilde{\omega_N}} \int_{\mathbb{R}^N} H_N(u) \, dx + \int_{\mathbb{R}^N} u \, d\mu_N,
	\end{equation}
	where, denoting by $\omega_{N-1}$ and $V_{N}$ the $(N-1)$-dimensional measure of the unit sphere $\mathbb{S}^{N-1}$ and the volume of the unit ball in $\mathbb{R}^N$ respectively, we use the notation
	\begin{equation*}\label{Not2} \mu_N(x):= \frac{1}{V_N \left( 1+ |x|^{\frac{N}{N-1}} \right)^N},\quad  d \mu_N(x):=\mu_N(x) \, dx,\quad \widetilde{\omega_N}:= N^N \left( \frac{N}{N-1} \right)^{N-1} \omega_{N-1},
	\end{equation*} 
	$$H_N(u) :=R_N(\nabla v_N,\nabla u), \quad \text{with} \quad v_N(x):= \ln \mu_N(x), $$
	 and 
	 $$ \displaystyle R_N(X,Y):=|X+Y|^N - |X|^N  -N|X|^{N-2}X \cdot Y \qquad (X,Y) \in \mathbb{R}^N \times \mathbb{R}^N.$$
	
	\medskip
	In dimension $N=2$, by means of the stereographic projection, inequality \eqref{EuclideanOnofriNdimensional} is equivalent to the classical Onofri inequality, proved in \cite{Ono-1982} for  functions $v$ in the Sobolev space  $H^1(\mathbb{S}^2)$: 
	\begin{equation}\label{Onofri}
		\ln\left(\int_{\mathbb{S}^2} e^v\, d\theta\right)\leq \frac{1}{4}\int_{\mathbb{S}^2}|\nabla v|^2\, d\theta+\int_{\mathbb{S}^2} v \, d\theta, 
	\end{equation}
	where $d\theta$ denotes the standard normalized Lebesgue’s measure on $\mathbb{S}^2$. Inequality \eqref{Onofri} improves the previous Moser-Trudinger inequality, see \cite{Mos-1971,Mos-1973,Tru} and  was later extended by Beckner \cite{Bec-1993} to $\mathbb{S}^N$ for $N\geq 2$. See \cite{BCMsurvey}, along with the references cited there for an overview of the main results involving \eqref{Onofri}. In particular, Onofri inequality  can be obtained via several techniques, including various limiting procedures, mass transportation methods in the radial case, and rigidity methods associated with a related nonlinear flow (we refer to the papers by Dolbeault, Esteban, Jankowiak, and Tarantello \cite{DolEstJan-2015, DET}, and references therein for full proofs and further details). The Onofri inequality also has important applications; for instance, in chemotaxis it appears in the study of the Keller-Segel model (see \cite{CALVCORR, GAJZAK}).

	\medskip
	\noindent
	
	A singular $N$-dimensional Euclidean Onofri inequality was obtained in \cite{LamLu-2018} by Lam and Lu for any $N \geq 2$ and $s \in [0,N)$. Their result can be equivalently formulated for any $u \in C_0^{\infty}(\mathbb{R}^N)$ as
	\begin{equation}\label{OnofriSingularDimensionN}
		\ln\left( \int_{\mathbb{R}^N} e^u \, d\mu_{N,s} \right)\leq \frac{1}{\widetilde{\omega_{N,s}}}\int_{\mathbb{R}^N} H_{N,s}(u)\, dx + \int_{\mathbb{R}^N} u \, d\mu_{N,s},
	\end{equation}
	
	\noindent
	where
	
	$$\dis  \widetilde{\omega_{N,s}} :=\left(\frac{N-s}{N}\right)^{N-1} \widetilde{\omega_{N}}=N^N \left( \frac{N-s}{N-1} \right)^{N-1}\omega_{N-1},$$
	\begin{equation}\label{DefMuns}
		\mu_{N,s}(x):=\frac{N-s}{\omega_{N-1}}\frac{1}{(1+|x|^{\frac{N-s}{N-1}})^N|x|^s},  \qquad g_{N,s}(x):= \ln \left[ \;  |x|^s \mu_{N,s}(x)  \; \right],
	\end{equation}
	and 
	\begin{equation}\label{DefHns}
		H_{N,s}(u):=R_N(\nabla g_{N,s},\nabla u ).
	\end{equation}
	
	For $N=2$ and $s\in [0,2)$, the inequality \eqref{OnofriSingularDimensionN} was established in \cite{DET} (see also \cite{DonLamLu-2018}) for any $u\in L^1(\mathbb{R}^2,d\mu_{2,s})$ satisfying $\vert{}\nabla u\vert{}\in L^2(\mathbb{R}^2,dx)$. Therefore, as a first step in this paper, we identify a natural functional setting where \eqref{OnofriSingularDimensionN} holds for any $N \geq 2$ and $s \in [0,N)$. Precisely, we introduce  the following weighted Sobolev space:
\medskip
\begin{align*}
W_{\mu_{N,s}}(\mathbb{R}^N):=\{ \, u\in L^1(\mathbb{R}^N,d\mu_{N,s}): \; \;  |\nabla u|\in L^{N}(\mathbb{R}^N,dx); \; \;  |\nabla u|^2|\nabla g_{N,s}|^{N-2}\in L^1(\mathbb{R}^N,dx)\},
\end{align*}
	
	\medskip
	\noindent
	equipped with norm 
	\begin{equation*}
		\|u\|_{\mu_{N,s}}:= \int_{\mathbb{R}^N} |u| d{\mu_{N,s}} + \|\nabla u\|_N + \left(\int_{\mathbb{R}^N}|\nabla u|^2 |\nabla g_{N,s}|^{N-2} \, dx\right)^{\frac{1}{2}}.
	\end{equation*}
	
	\noindent
	We first show that smooth, compactly supported functions on $\mathbb{R}^N$ are dense in $W_{\mu_{N,s}}(\mathbb{R}^N)$ with respect to the norm $\Vert{} \cdot \Vert{}_{\mu_{N,s}}$, so that we can extend inequality \eqref{OnofriSingularDimensionN} to $\wmns$:

	\begin{theorem}\label{densitystatementANDextension}	
		Assume $N \geq 2$ and $s\in [0,N)$. Then
		$$ W_{\mu_{N,s}}(\mathbb{R}^N)=\overline{C^{\infty}_0(\mathbb{R}^N)}^{\|\cdot\|_{\mu_{N,s}}}. $$
		Moreover, for any function $u\in W_{\mu_{N,s}}(\mathbb{R}^N)$, it holds that
		\begin{equation}\label{MT-SingularityNewSpace}
			\displaystyle \ln\left( \int_{\mathbb{R}^N} e^u \, d\mu_{N,s} \right)\leq \frac{1}{\widetilde{\omega_{N,s}}}\int_{\mathbb{R}^N} H_{N,s}(u)\, dx + \int_{\mathbb{R}^N} u \, d\mu_{N,s}.
		\end{equation}	
	\end{theorem}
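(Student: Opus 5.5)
The proof has two parts: first density, then the extension of \eqref{OnofriSingularDimensionN} by passing to the limit along a smooth approximating sequence.

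\emph{Density.} Since $g_{N,s}(x)=\ln\frac{N-s}{\omega_{N-1}}-N\ln(1+|x|^{\frac{N-s}{N-1}})$, we have
\[
|\nabla g_{N,s}(x)|=\frac{N(N-s)}{N-1}\,\frac{|x|^{\frac{1-s}{N-1}}}{1+|x|^{\frac{N-s}{N-1}}}.
\]
This is comparable to $|x|^{-1}$ for large $|x|$. Near the origin it behaves like $|x|^{\frac{1-s}{N-1}}$, which is singular only when $s>1$, and then $|x|^{\frac{(1-s)(N-2)}{N-1}}$ is still locally integrable. Given $u\in\wmns$, I will proceed in three steps.

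\emph{Step 1 (truncation in the range).} Set $T_k u:=\max(-k,\min(u,k))$. Then $T_k u\to u$ in all three parts of the norm by dominated convergence, because $|\nabla T_k u|\le|\nabla u|$ and $\nabla T_k u=\nabla u\,\mathbf 1_{\{|u|<k\}}$. So we may assume $u$ is bounded.

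\emph{Step 2 (cutoff at infinity).} Use the logarithmic cutoffs $\eta_R(x)=\min\{1,\max\{0,\,2-\ln|x|/\ln R\}\}$, which equal $1$ on $B_R$ and vanish outside $B_{R^2}$. The key term is $u\nabla\eta_R$:
\[
\int|u|^N|\nabla\eta_R|^N\,dx\le\frac{\|u\|_\infty^N}{(\ln R)^N}\int_{R<|x|<R^2}\frac{dx}{|x|^N}=\frac{C\,\|u\|_\infty^N}{(\ln R)^{N-1}}\to0 .
\]
Similarly, since $|\nabla g_{N,s}|^{N-2}\lesssim |x|^{2-N}$ at infinity,
\[
\int|u|^2|\nabla\eta_R|^2|\nabla g_{N,s}|^{N-2}\,dx\lesssim\frac{\|u\|_\infty^2}{(\ln R)^2}\int_R^{R^2}\frac{dr}{r}=\frac{C\,\|u\|_\infty^2}{\ln R}\to0 .
\]
The remaining terms $\eta_R\nabla u$ and the $L^1(d\mu_{N,s})$ term converge by dominated convergence.

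\emph{Step 3 (mollification).} For bounded, compactly supported $u$ with $\nabla u\in L^N$, mollify. Then $\nabla u_\varepsilon\to\nabla u$ in $L^N$. Because the weight $|\nabla g_{N,s}|^{N-2}$ is locally integrable, and in fact locally in $L^{N/(N-2)}$ (check: $\frac{(1-s)N}{N-1}>-N$ always holds), Hölder's inequality with exponents $\frac N2$ and $\frac N{N-2}$ gives convergence of the weighted term. Finally $\mu_{N,s}\in L^1$ and $u_\varepsilon\to u$ locally in $L^q$ for all $q$, which handles the first term.

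\emph{Extension of the inequality.} Take $u\in\wmns$ and $u_k\in C_0^\infty$ with $u_k\to u$ in $\|\cdot\|_{\mu_{N,s}}$. Convergence of $\int u_k\,d\mu_{N,s}$ is immediate. For the $H_{N,s}$ term I will use the elementary pointwise bound
\[
|R_N(X,Y)-R_N(X,Z)|\le C\left(|X|^{N-2}+|Y|^{N-2}+|Z|^{N-2}\right)\left(|Y|+|Z|\right)|Y-Z|,
\]
which comes from $R_N(X,\cdot)$ being $C^1$ with $|\nabla_Y R_N(X,Y)|\lesssim(|X|^{N-2}+|Y|^{N-2})|Y|$. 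Integrating and applying Hölder's inequality in $L^N$ and in the weighted $L^2$ space, we get $\int H_{N,s}(u_k)\to\int H_{N,s}(u)$, and this also shows that $H_{N,s}(u)\in L^1$. The mixed term $|X|^{N-2}|Y||Y-Z|$ is bounded by the weighted $L^2$ norms via Cauchy--Schwarz, which is exactly why the third component appears in the norm.

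For the left-hand side, pass to a subsequence with $u_k\to u$ a.e. and apply Fatou's lemma: $\int e^u\,d\mu_{N,s}\le\liminf\int e^{u_k}\,d\mu_{N,s}$. Combined with the inequality for each $u_k$ and the convergence of the right-hand side, this yields \eqref{MT-SingularityNewSpace}.

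\emph{Main obstacle.} I expect the delicate step to be the cutoff at infinity. Only the $L^N$ gradient is available there, so the logarithmic cutoff combined with the boundedness from Step 1 is essential. I also need to confirm that the truncation step is compatible with the weighted term, and that $R_N\ge0$ so that the right-hand side is well defined.
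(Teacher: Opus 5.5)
Your proof is correct and follows the paper's overall route: truncate to bounded functions, cut off at infinity with a capacity-type cutoff (your logarithmic $\eta_R$ in place of the paper's power-type $\eta_k$), smooth, and then pass to the limit in the $C_0^\infty$ inequality using convergence of $\int H_{N,s}$ and Fatou's lemma for the exponential term. The differences are only technical: mollifying at fixed compact support avoids the paper's need to beat the $\ln k^*$ growth of $\int_{B_{k^*}}|\nabla g_{N,s}|^N$, and your Lipschitz bound for $R_N(X,\cdot)$ replaces dominated convergence along a subsequence; one small fix is needed in Step 3, since $\mu_{N,s}\in L^1$ together with $L^q$-convergence for finite $q$ is not enough by itself, so justify the $L^1(d\mu_{N,s})$ convergence either by $|u_\varepsilon|\le\|u\|_\infty$ plus dominated convergence or by $\mu_{N,s}\in L^p_{\mathrm{loc}}$ for some $p>1$.
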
	
	
	We observe that the previous theorem recovers the results obtained in \cite{BorCinMan-2025} in the non-singular case, namely for any $N \geq 2$ and $s=0$. Inspired by \cite{BorCinMan-2025}, another step of this paper is to establish a natural connection between inequality \eqref{MT-SingularityNewSpace} and  a singular Carleson-Chang type inequality. To this aim, we recall that Sandeep and Adimurthi in  \cite{SanAdi-2007} proved that if $\Omega$  is a bounded domain in $\mathbb{R}^N$ containing the origin with $N \geq 2$ and $ s \in [0,N)$, then there exists a constant $C_{N,s}>0$ such that
	\begin{equation}\label{MTSingularity}
		\sup_{ \substack{u\in W^{1,N}_0(\Omega) \\ \|\nabla u\|_{L^N(\Omega,dx)}\leq 1}} \frac{1}{|\Omega|^{1-\frac{s}{N}}} \int_\Omega  \frac{e^{\alpha_N (1-\frac{s}{N}) |u|^{\frac{N}{N-1}} }}{|x|^s} \, dx \leq C_{N,s},
	\end{equation}
	where $\alpha_{N}:=N\omega_{N-1}^\frac{1}{N-1}$ is the standard Moser-Trudinger exponent. Moreover, if $\Omega= B_r$, the supremum is attained for any $N \geq 2$ (see \cite{IulManRoy-2017} for $N=2$, and  \cite{CsaNguRoy-2021} for  any $N\geq 2$). As a straightforward consequence of \eqref{MTSingularity} and Young's inequality, one can prove that for every $u\in W^{1,N}_0(\Omega)$,
	\begin{equation*}
		\ln\left(\frac{1}{|\Omega|^{1-\frac{s}{N}}} \int_{\Omega} \frac{e^u }{|x|^s} \, dx \right) \leq \frac{1}{\widetilde{\omega_{N,s}}}\int_\Omega |\nabla u|^N + \ln C_{N,s}.
	\end{equation*}

	
	
	Denoting by $B_1$ the open unit ball of $\mathbb{R}^N$,  we prove that for any $u \in W_0^{1,N}(B_1)$ and $s \in [0,N)$ we have
	\begin{align}\label{Chang-Carleson-Singularity}
		\ln \left(\frac{1}{|B_1|^{1-\frac{s}{N}}} \int_{B_1} \frac{e^u }{|x|^s} \, dx \right)
		< \frac{1}{\widetilde{\omega_{N,s}}}\int_{B_1}| \nabla u|^N \, dx 
		+ \sum_{k=1}^{N-1}\frac{1}{k}  + \ln\left(\frac{N}{N-s} \right)+ \frac{s}{N} \ln V_N. 
	\end{align} 
	
	\noindent
	We notice that for $s=0$ we obtain the classical non-singular Carleson-Chang inequality, proved in  \cite{CarCha-1986}.
	Then, we consider the functional $J_s: W^{1,N}_0(B_1) \to \mathbb{R}$, associated to \eqref{Chang-Carleson-Singularity}, defined as	
	\[J_s(u):=\frac{1}{\widetilde{\omega_{N,s}}}\int_{B_1}|\nabla u|^N \, dx -\ln \left(\frac{1}{|B_1|^{1-\frac{s}{N}}} \int_{B_1} \frac{e^u }{|x|^s} \, dx \right),\]
	showing that
	\[\inf_{W^{1,N}_0(B_1)}{J_s}= -\sum_{k=1}^{N-1}\frac{1}{k} - \ln\left(\frac{N}{N-s}\right)  - \frac{s}{N} \ln V_N .\]
	
	\noindent
	On the other hand, the functional $I_{s} : W_{\mu_{N,s}}(\mathbb{R}^N) \to \mathbb{R}$ associated with \eqref{MT-SingularityNewSpace} is defined by
	\[I_{s}(u):=\frac{1}{\widetilde{\omega_{N,s}}}\int_{\mathbb{R}^N} H_{N,s}(u)\, dx + \int_{\mathbb{R}^N} u \, d\mu_{N,s}- \ln\left( \int_{\mathbb{R}^N} e^u \,  d\mu_{N,s} \right).\]
	
	\noindent
	By  \eqref{MT-SingularityNewSpace} and considering that $I_s(0)=0$, we know that
	$$
	\min_{\wmns} I_s = 0. 
	$$
	
	\noindent
	Thus we obtain an equivalence between  \eqref{MT-SingularityNewSpace} and \eqref{Chang-Carleson-Singularity} in the following sense:
	\begin{theorem}\label{Equivalence} For $N\geq 2$ and $s\in [0, N)$, we have 
		\[\min_{W_{\mu_{N,s}}(\mathbb{R}^N)} I_s= \inf_{W^{1,N}_0(B_1)} J_s + \sum_{k=1}^{N-1} \frac{1}{k}+ \ln\left(\frac{N}{N-s} \right) + \frac{s}{N} \ln V_N  .\]
	\end{theorem}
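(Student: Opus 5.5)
The plan is to convert $I_s$ into a Dirichlet-type functional for the shifted function $w=u+g_{N,s}$, and then pass between $\mathbb{R}^N$ and $B_1$ by truncating and rescaling. The identity we must prove is $\min I_s = \inf J_s + K$, with $K:=\sum_{k=1}^{N-1}\frac1k+\ln\frac{N}{N-s}+\frac sN\ln V_N$. Since $\min_{\wmns} I_s=0$ by Theorem \ref{densitystatementANDextension}, this amounts to $\inf_{W^{1,N}_0(B_1)} J_s = -K$. We prove the two inequalities separately.

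\textbf{Step 1: reformulating $I_s$.} A radial computation shows that $g_{N,s}$ solves a singular Liouville equation
$$-\Delta_N g_{N,s} = \frac{\widetilde{\omega_{N,s}}}{N}\,\mu_{N,s}=c_{N,s}\,\frac{e^{g_{N,s}}}{|x|^s}\quad\text{in }\mathbb{R}^N,$$
for an explicit $c_{N,s}>0$. We use this equation to integrate by parts in the term $-N\int|\nabla g_{N,s}|^{N-2}\nabla g_{N,s}\cdot\nabla u$. For $u\in C_0^\infty(\mathbb{R}^N)$ this gives
$$\frac{1}{\widetilde{\omega_{N,s}}}\int H_{N,s}(u)\,dx+\int u\,d\mu_{N,s}=\frac{1}{\widetilde{\omega_{N,s}}}\int_{\mathbb{R}^N}\big(|\nabla w|^N-|\nabla g_{N,s}|^N\big)dx .$$
Here the right-hand side is finite, because the integrand vanishes outside $\operatorname{supp}u$. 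We also have $\int e^u\,d\mu_{N,s}=\frac{N-s}{\omega_{N-1}}\int e^{w}|x|^{-s}dx$. By the density part of Theorem \ref{densitystatementANDextension}, it is enough to work with compactly supported test functions, both here and below.

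\textbf{Step 2: $\inf J_s\ge -K$.} Fix $v\in C_0^\infty(B_1)$ and $R>1$, and set $v_R(x)=v(x/R)$. Define $w_R=v_R+g_{N,s}(R)$ on $B_R$ and $w_R=g_{N,s}$ outside $B_R$; here $g_{N,s}(R)$ denotes the value of the radial function $g_{N,s}$ at $|x|=R$. Then $u_R=w_R-g_{N,s}$ is Lipschitz with compact support, so $u_R\in\wmns$. Using scale invariance of $\int|\nabla v_R|^N$ and the identity $\int_{B_R}e^{v_R}|x|^{-s}=R^{N-s}\int_{B_1}e^v|x|^{-s}$, Step 1 gives
$$I_s(u_R)=J_s(v)-\frac{1}{\widetilde{\omega_{N,s}}}\int_{B_R}|\nabla g_{N,s}|^N dx - g_{N,s}(R)-(N-s)\ln R+\text{const}+o(1),$$
where the $o(1)$ comes from the tail $\int_{\mathbb{R}^N\setminus B_R}e^{g_{N,s}}|x|^{-s}$.

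The explicit radial computation of $\int_{B_R}|\nabla g_{N,s}|^N$ is the heart of the proof. Here $|\nabla g_{N,s}|=\frac{N(N-s)}{N-1}\frac{r^{a-1}}{1+r^a}$ with $a=\frac{N-s}{N-1}$. Substituting $t=r^a/(1+r^a)$ and expanding $t^{N-1}/(1-t)$, the integral equals a $\ln R$ term plus the harmonic sum $\sum_{k=1}^{N-1}\frac1k$ plus $o(1)$. The logarithmic terms must cancel exactly against $-g_{N,s}(R)-(N-s)\ln R$. What remains should give $I_s(u_R)\to J_s(v)+K$, where the normalising constants $\ln\frac{N}{N-s}$ and $\frac sN\ln V_N$ come from $\frac{N-s}{\omega_{N-1}}$ and $|B_1|^{1-s/N}$. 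Since $I_s\ge0$, we conclude $J_s(v)\ge -K$.

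\textbf{Step 3: $\inf J_s\le -K$.} We run the same correspondence backwards with $u\equiv0$. Take $v_\varepsilon(x)=g_{N,s}(x/\varepsilon)-g_{N,s}(1/\varepsilon)$ on $B_1$. This function lies in $W_0^{1,N}(B_1)$ and is a truncated, concentrating bubble. The computation of Step 2, with $R=1/\varepsilon$, yields $J_s(v_\varepsilon)+K\to I_s(0)=0$. Combining the two steps gives the equality. I expect the main obstacle to be the bookkeeping of constants in the asymptotics of $\int_{B_R}|\nabla g_{N,s}|^N$ and of the tail integrals, so that exactly $K$ survives, together with justifying the integration by parts of Step 1 for non-smooth glued functions.
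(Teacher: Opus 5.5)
Your strategy works, but one asymptotic claim in Step 2 is false. For fixed $v$, the tail $\int_{|x|>R}e^{g_{N,s}}|x|^{-s}\,dx=\int_{|x|>R}\mu_{N,s}\,dx$ is \emph{not} negligible. With $a=\frac{N-s}{N-1}$,
\[
e^{g_{N,s}(R)}R^{N-s}\int_{B_1}\frac{e^{v}}{|x|^s}\,dx\sim\frac{N-s}{\omega_{N-1}}R^{-a}\int_{B_1}\frac{e^{v}}{|x|^s}\,dx,
\qquad
\int_{|x|>R}\mu_{N,s}\,dx=1-\frac{R^{N-s}}{(1+R^{a})^{N-1}}\sim(N-1)R^{-a}.
\]
The two contributions are of the same order, so the tail changes the logarithm by an $O(1)$ amount. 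The correct limit, which is exactly what the paper computes in its ``$\le$'' part, is
\[
\lim_{R\to\infty}I_s(u_R)=\frac{1}{\widetilde{\omega_{N,s}}}\int_{B_1}|\nabla v|^N\,dx-\ln\Big(\frac{1}{|B_1|^{1-\frac sN}}\int_{B_1}\frac{e^{v}}{|x|^s}\,dx+\frac{N(N-1)V_N^{s/N}}{N-s}\Big)+K<J_s(v)+K .
\]
This does not break Step 2, because you only need an upper bound on $I_s(u_R)$. Discarding the positive tail inside the logarithm gives $0\le I_s(u_R)\le J_s(v)+K+o(1)$, hence $J_s(v)\ge-K$ (strictly, once the tail is kept).

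In Step 3 the tail really is negligible, since $e^{g_{N,s}(R)}R^{N-s}\int_{B_1}e^{v_\varepsilon}|x|^{-s}\,dx=\int_{B_R}\mu_{N,s}\,dx\to1$. So $J_s(v_\varepsilon)\to-K$ is correct.

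Two minor points:
\begin{itemize}
\item Since $e^{g_{N,s}}=|x|^s\mu_{N,s}$, one has $\int e^{u}\,d\mu_{N,s}=\int e^{w}|x|^{-s}\,dx$ with no prefactor $\frac{N-s}{\omega_{N-1}}$; that constant already sits in $g_{N,s}(R)$.
\item $u_R$ is not Lipschitz for $s>1$. It is, however, a compactly supported $W^{1,N}$ function, so Lemma~\ref{Sobole-NewSpace} places it in $W_{\mu_{N,s}}(\mathbb{R}^N)$, and you can take $v\in W^{1,N}_0(B_1)$ directly instead of arguing by density.
\end{itemize}

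With these repairs you have a valid proof, by a different route from the paper's. The paper never uses the value of either side:
\begin{itemize}
\item For ``$\ge$'' it pushes an arbitrary $u\in C^\infty_0(\mathbb{R}^N)$ into $W^{1,N}_0(B_r)$ via $u\eta_r+g_{N,s}-g_{N,s}(r)$. It then uses only $J_s\ge\inf J_s$, after the rescaling of Remark~\ref{invariance}.
\item For ``$\le$'' it pulls $u\in W^{1,N}_0(B_1)$ out to $\mathbb{R}^N$ with the same $u_r$ as your Step 2, and uses only $I_s(u_r)\ge\inf I_s$.
\end{itemize}
Each sharp inequality is thus derived from the other, which is the real content of the ``equivalence''.

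You instead import $\min I_s=0$ from Theorem~\ref{densitystatementANDextension} and reprove $\inf J_s=-K$ (Proposition~\ref{PropInfJs}). Your Step 2 is the paper's ``$\le$'' transplantation with $\inf I_s=0$ inserted. Your Step 3 is the paper's ``$\ge$'' transplantation applied only to the known minimizer $u\equiv0$. This removes the cutoff $\eta_r$ and the dominated-convergence step, and yields an explicit minimizing sequence of truncated bubbles for $J_s$.

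The cost is that you only obtain the implication from the Onofri inequality to the sharp singular Carleson--Chang constant, not the converse. Moreover, given Theorem~\ref{densitystatementANDextension} and Proposition~\ref{PropInfJs}, the numerical identity is already immediate. What the paper's proof adds is precisely the value-free transfer in both directions.
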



\noindent
We notice that the weighted Sobolev space $W_{\mu_{N,s}}(\mathbb{R}^N)$ introduced here naturally contains constant functions as trivial minimizers for $I_s$, which we prove to be Fréchet differentiable. 

\smallskip

In this paper we also establish the existence of non-trivial, non-constant minimizers for the functional $I_s$, fully answering a problem left open not only in the singular case (see Lam and Lu in \cite{LamLu-2018}), but also in the case without singularities (see Del Pino and Dolbeault in \cite{dPinDol-2013}).  In addition, we prove that such minimizers are the only critical points.
	These results are strictly related to the conformal invariance of $I_s$. For any $x_0\in \R^N$ and $t>0$, we let $T_{t,x_0}:\R^N\longrightarrow \R^N$ denote the map  
	\begin{equation*}
		T_{t,x_0}(x) := t (x-x_0),
	\end{equation*}
	and we consider the group of transformations
	\begin{equation}\label{G}
		\mG :=\begin{cases}
			\{ T_{t,x_0}\;:\; x_0\in \R^N, t>0\} & \text{ if } s= 0,\\
			\{ T_{t,0}\;:\;  t>0\} & \text{ if } s>0. 
		\end{cases}
	\end{equation}
	For any $T\in \mG$, we let $J_T$ denote the Jacobian matrix of $T$ and we define 
	\begin{equation}\label{notation}
		[T]:= \sqrt[N]{\det J_T} \quad \text{ and } \quad h_T(x):= g_{N,s}(Tx)-g_{N,s}(x) + (N-s) \ln [T].
	\end{equation}
	
	\medskip
	\noindent
	Then, we prove that the functions $h_T$ so defined are minimum points and, up to a constant, the unique critical points of $I_s$, and that the functional $I_s$ is conformally invariant:
	
	\begin{theorem}\label{Invariance-MinimumIs}
		For any $T\in \mG$, we have that $h_T\in \wmns$. Moreover,
		
		\begin{enumerate}[label=$\alph*$), ref=$\alph*$)]
			\item for any $T\in \mG$, $h_T$ is a minimum point for $I_s$, namely $I_s(h_T)=0$;\label{minimumht} \medskip
			\item  all critical points of $I_s$ in $\wmns$ are of the form $h_T+c$ for some $T\in \mG$ and $c\in \R$.\label{allcritical} 
		\end{enumerate}
		
		\smallskip
		Furthermore, $I_s$ is conformally invariant in the following sense: for any $T\in \mG$ and for any $u\in \wmns$, we have $u_T\in \wmns$  and $I_s(u)=I_s(u_T)$,  where 
		$$
		u_T(x) :=u(Tx)+h_T(x).
		$$
	\end{theorem}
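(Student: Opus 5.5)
The plan is to establish the conformal invariance identity first and obtain \ref{minimumht} from it. Part \ref{allcritical} will then follow from the Euler--Lagrange equation combined with the classification result of \cite{CirEspLi-2026}.

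\emph{Step 1: $h_T\in\wmns$.} Since $g_{N,s}(x)=\ln\frac{N-s}{\omega_{N-1}}-N\ln(1+|x|^{\frac{N-s}{N-1}})$, we have $|\nabla g_{N,s}(x)|\sim |x|^{\frac{1-s}{N-1}}$ near $0$ and $|\nabla g_{N,s}(x)|\sim N\frac{N-s}{N-1}|x|^{-1}$ at infinity. For $T=T_{t,x_0}$, the function $h_T$ is smooth away from $0$ and $x_0$ (and $x_0=0$ if $s>0$), and $|\nabla h_T|=O(|x|^{-2})$ at infinity, because the leading logarithmic terms cancel. This gives $|\nabla h_T|\in L^N$, the weighted gradient term is integrable, and $h_T$ is bounded by $C(1+\ln(1+|x|))$, hence lies in $L^1(d\mu_{N,s})$. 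Near the singular points $|\nabla h_T|$ is locally bounded by $C|x|^{\frac{1-s}{N-1}}$, which is in $L^N_{loc}$ since $s<N$.

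\emph{Step 2: invariance.} The key identities are $e^{h_T}\mu_{N,s}=[T]^{N}\mu_{N,s}\circ T$, which is immediate from the definition (note $|Tx|^s=t^s|x|^s$ when $x_0=0$, which is why $x_0=0$ is required for $s>0$), and hence $\int e^{u_T}d\mu_{N,s}=\int e^u d\mu_{N,s}$ by the change of variables $y=Tx$. For the gradient part, I would exploit the algebraic expansion
\[
R_N(\nabla g,\nabla v)=|\nabla(g+v)|^N-|\nabla g|^N-N|\nabla g|^{N-2}\nabla g\cdot\nabla v .
\]
With $g=g_{N,s}$, $v=u_T$ we have $g+v=(g+u)\circ T+\mathrm{const}$, and $\int|\nabla(f\circ T)|^N$ is scale invariant. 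The cross term is handled by integration by parts, using that $g_{N,s}$ solves $-\Delta_N g_{N,s}=c_{N,s}\,\mu_{N,s}|x|^{s}\cdot|x|^{-s}$, i.e.\ $-\Delta_N g_{N,s}=\widetilde{\omega_{N,s}}\,\mu_{N,s}$. I will check the constant by direct computation on this radial profile. Thus
\[
\frac{1}{\widetilde{\omega_{N,s}}}\int H_{N,s}(u)=\frac{1}{\widetilde{\omega_{N,s}}}\int\big(|\nabla(g+u)|^N-|\nabla g|^N\big)-N\int u\,d\mu_{N,s},
\]
formally. Since $|\nabla g|^N\notin L^1$, I would first do the computation for $u\in C^\infty_0$ on large balls $B_R$, tracking boundary terms that vanish as $R\to\infty$. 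Invariance on $\wmns$ then follows by density (Theorem \ref{densitystatementANDextension}), together with continuity of $I_s$ in $\|\cdot\|_{\mu_{N,s}}$ and continuity of $u\mapsto u_T$, which holds since $T$ is affine and $h_T$ has the decay from Step 1. Applying invariance with $u=0$ gives $I_s(h_T)=I_s(0)=0$, which is \ref{minimumht}. I expect this step to be the main obstacle: the renormalized integrals are only conditionally meaningful, so the boundary terms and cancellations on $\partial B_R$ (and near $0,x_0$) require careful control.

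\emph{Step 3: critical points.} By Fréchet differentiability, a critical point $u$ satisfies the weak equation
\[
-\Delta_N(g_{N,s}+u)+\Delta_N g_{N,s}=\widetilde{\omega_{N,s}}\Big(\frac{e^u}{\int e^u d\mu_{N,s}}-1\Big)\mu_{N,s},
\]
so that $w=g_{N,s}+u-\ln\int e^ud\mu_{N,s}$ satisfies $-\Delta_N w=\widetilde{\omega_{N,s}}\frac{N-s}{\omega_{N-1}}|x|^{-s}e^{w}$ in $\R^N$ with $\int|x|^{-s}e^w<\infty$. The growth conditions implied by $u\in\wmns$ should be exactly what the classification theorem of \cite{CirEspLi-2026} needs. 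That theorem gives $w=\ln\big(\lambda^{N-s}/(1+|\lambda(x-x_0)|^{\frac{N-s}{N-1}})^N\big)+\mathrm{const}$, with $x_0=0$ when $s>0$. Comparing with the definition of $h_T$, this says $u=h_T+c$. The remaining checks are that the weak formulation against $C_0^\infty$ test functions suffices, and that $u$ has the regularity required by the classification; both should follow from local $C^{1,\alpha}$ estimates for the $N$-Laplacian.
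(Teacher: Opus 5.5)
Your Step~3 is the paper's argument: Lemma \ref{ConectionLioville} combined with the classification in \cite{CirEspLi-2026}. Steps~1--2, however, reverse the paper's logic.

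The paper first computes $I_s(h_T)=0$ directly, in Proposition \ref{functionalwithu=0} and Corollary \ref{CorImportant}, via a Green-identity computation on $B_r$ with explicit boundary asymptotics. By Theorem \ref{densitystatementANDextension} this makes $h_{T^{-1}}$ a minimizer, hence a critical point. Testing its Euler--Lagrange equation \eqref{criticalpoints} with an arbitrary $u\in\wmns$ gives the cross-term identity \eqref{FinalStepconvergence}. The transformation law (Proposition \ref{VarianceFunctionalLambdas}) then holds for every $u\in\wmns$ at once, with no boundary analysis for general $u$ and no density step.

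You instead prove invariance for $u\in C^\infty_0(\R^N)$ by a renormalized computation on $B_R$. You read off \ref{minimumht} as the case $u=0$, and extend to $\wmns$ by density, using continuity of $I_s$ (Appendix \ref{app}) and of $u\mapsto u_T$ (Remark \ref{remarkuT}). This works and does not need the criticality of $h_{T^{-1}}$. Its core is the same asymptotic computation, though: for $u\in C^\infty_0$ and $R$ large, every boundary term involves only $g_{N,s}$ and $h_T$. So you are essentially redoing Proposition \ref{functionalwithu=0}, and you additionally need the continuity of $I_s$.

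Several details need fixing when you carry this out.

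\emph{The constant.} By Remark \ref{Nlaplaciano}, $-\Delta_N g_{N,s}=\frac{\widetilde{\omega_{N,s}}}{N}\mu_{N,s}$. Hence for $u\in C^\infty_0(\R^N)$ the correct identity is $\frac{1}{\widetilde{\omega_{N,s}}}\int H_{N,s}(u)\,dx=\frac{1}{\widetilde{\omega_{N,s}}}\int\bigl(|\nabla(g_{N,s}+u)|^N-|\nabla g_{N,s}|^N\bigr)\,dx-\int u\,d\mu_{N,s}$. The coefficient $1$ in front of $\int u\,d\mu_{N,s}$ is exactly what cancels the linear term of $I_s$; with your factor $N$ the invariance would not close. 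The same slip, plus a spurious factor $\frac{N-s}{\omega_{N-1}}$, recurs in Step~3. There it is harmless, since a constant in front of $e^w$ can be absorbed into $w$.

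\emph{The boundary terms do not vanish.} Take $T=T_{t,x_0}$ with $t\neq 1$.
\begin{itemize}
\item The volume term satisfies $\frac{1}{\widetilde{\omega_{N,s}}}\bigl(\int_{T(B_R)}-\int_{B_R}\bigr)|\nabla g_{N,s}|^N\,dx\to\frac{N-s}{N-1}\ln t$.
\item On $\partial B_R$ one has $u_T=h_T\to-\frac{N-s}{N-1}\ln t$ uniformly, so the flux term $\frac{N}{\widetilde{\omega_{N,s}}}\int_{\partial B_R}u_T|\nabla g_{N,s}|^{N-2}\nabla g_{N,s}\cdot\nu\,d\sigma$ also tends to $\frac{N-s}{N-1}\ln t$.
\end{itemize}
Invariance comes from the cancellation of these two nonzero limits. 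In particular, you cannot apply the scale invariance of the $N$-Dirichlet integral to the non-integrable $|\nabla g_{N,s}|^N$ on its own.

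\emph{The decay rate in Step~1.} The bound $|\nabla h_T|=O(|x|^{-2})$ is false for $s>1$. For $x_0=0$ the rate is $|x|^{\frac{1+s-2N}{N-1}}$ (Lemma \ref{prophT}). This still suffices, because the exponent is $<-1$ precisely when $s<N$.

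\emph{Regularity in Step~3.} No extra regularity is needed, and $C^{1,\alpha}$ bounds at the origin would in fact fail for $s\ge 1$, as $g_{N,s}$ itself shows. The classification applies to weak solutions $w\in W^{1,N}_{loc}(\R^N)$ with $e^w|x|^{-s}\in L^1(\R^N)$. Both conditions follow from Proposition \ref{inclusionLq} and \eqref{MT-SingularityNewSpace}.
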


	The proof of Theorem \ref{Invariance-MinimumIs} strongly relies on a classification result obtained in  \cite{CirEspLi-2026} (see also \cite{Esp-2018} for the case $s=0$) for the following singular Liouville-type equation for $s \in [0,N)$:
	\begin{equation}\label{LiovilleProblem}
		\begin{cases}
			\dis -\Delta_N z=\frac{e^z}{|x|^s} \quad \text{ in $\mathbb{R}^N$,} \bigskip\\
			\dis \int_{\mathbb{R}^N}\frac{e^z}{|x|^s}\, dx<+\infty.
		\end{cases}
	\end{equation}
Specifically, any weak solution of \eqref{LiovilleProblem} can be written as $g_{N,s}\circ T+(N-s)\ln [T] +  \ln \left( \frac{\widetilde{\omega_{N,s}}}{ N}   \right)$  for some $T\in \mathcal G$. Here, we prove that critical points of $I_s$ are weak solutions to 
	\begin{equation*}
		-\Delta_N(g_{N,s}+u)=\frac{\widetilde{\omega_{N,s}}}{N}\frac{e^{u}\mu_{N,s}}{\int_{\mathbb{R}^N} e^u \, d\mu_{N,s}}= \frac{\widetilde{\omega_{N,s}}}{N}\frac{e^{u + g_{N,s} }/|x|^s}{\int_{\mathbb{R}^N} e^{u + g_{N,s}}/|x|^s \, dx}  \quad \text{ in $\mathbb{R}^N$}. 
	\end{equation*}
	Then, $u$ is a critical point of $I_s$ iff $w = u+g_{N,s}$ is a weak solution to  
	\begin{equation}\label{LiovilleWithIntegral}
		-\Delta_N w =	\frac{\widetilde{\omega_{N,s}}}{N} \frac{e^w/|x|^s}{\int_{\mathbb{R}^N} e^w/|x|^s \, dx} \text{ in $\mathbb{R}^N$},
	\end{equation}
	and the latter equation can be easily reduced to \eqref{LiovilleProblem}.

	In this paper we also investigate the optimality of the constant $1/\widetilde{\omega_{N,s}}$ in the inequality \eqref{MT-SingularityNewSpace}. For this purpose, for any $\lambda>0$, we consider the functional $I_{s,\lambda}: W_{\mu_{N,s}}(\mathbb{R}^N) \to \mathbb{R}$ defined as
	\begin{align}\label{functionalLambda}
		I_{s,\lambda}(u):=\frac{1}{\widetilde{\omega_{N,s}}}\int_{\mathbb{R}^N} H_{N,s}(u)\, dx + \lambda \int_{\mathbb{R}^N} u \, d\mu_{N,s}-\lambda\ln\left( \int_{\mathbb{R}^N} e^u \,  d\mu_{N,s} \right),
	\end{align}
	
	
	\noindent
	and we prove the following:

	\begin{theorem}\label{InfLambdaLess1Bigger1}
		Assume $N \geq 2$, $s \in [0,N)$ and $\lambda>0$. Then
		\begin{enumerate}[label=$\alph*$), ref=$\alph*$)]
			\item  if $0 < \lambda < 1$, we have
			\begin{equation}\label{InfZeroLambdaLeq1}
				\inf_{\wmns} I_{s,\lambda} =  0
			\end{equation} 
			
			and $I_{s,\lambda}(u) = 0 $ iff $u$ is a constant function; \label{lambda<1} \medskip
			\item  if $\lambda >1 $, we have $$\inf_{\wmns} I_{s,\lambda} = -\infty.$$\label{lambda>1}
		\end{enumerate}
	\end{theorem}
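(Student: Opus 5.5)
For part \ref{lambda<1}, I would write $I_{s,\lambda}$ as a combination of $I_s$ and the pure gradient term. For $u\in\wmns$ set $D(u):=\frac{1}{\widetilde{\omega_{N,s}}}\int_{\mathbb{R}^N}H_{N,s}(u)\,dx$. Then
$$I_{s,\lambda}(u)=\lambda I_s(u)+(1-\lambda)D(u).$$
Theorem \ref{densitystatementANDextension} gives $I_s\ge 0$. Pointwise convexity of $X\mapsto|X|^N$ gives $R_N(X,Y)\ge 0$, with equality iff $Y=0$ (strict convexity), so $D(u)\ge0$. Hence $I_{s,\lambda}\ge 0$ for $0<\lambda<1$, and constants give the value $0$, which proves \eqref{InfZeroLambdaLeq1}. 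If $I_{s,\lambda}(u)=0$, then $D(u)=0$, so $\nabla u=0$ a.e. Since $u\in L^1_{loc}$ with zero weak gradient on the connected set $\mathbb{R}^N$, $u$ is constant.

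For part \ref{lambda>1}, I would exploit the extremals $h_T$ from Theorem \ref{Invariance-MinimumIs}. Since $I_s(h_T)=0$, we have
$$I_{s,\lambda}(h_T)=\lambda\cdot 0+(1-\lambda)D(h_T)=-(\lambda-1)D(h_T).$$
It therefore suffices to show $D(h_{T})\to+\infty$ along $T=T_{t,0}$ as $t\to\infty$ (or $t\to0$); this family lies in $\mG$ for every $s$. The first idea is the scaling rule for $H_{N,s}(h_T)=R_N(\nabla g_{N,s},\nabla g_{N,s}\circ T\cdot t-\nabla g_{N,s})$. Here $\nabla(g_{N,s}(tx))=t\nabla g_{N,s}(tx)$, and $|\nabla g_{N,s}(x)|\sim \frac{N(N-s)}{N-1}|x|^{-1}$ at infinity while it is $\sim |x|^{\frac{N-s}{N-1}-1}$ near $0$. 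I would compute $D(h_{T_{t,0}})$ using $\int H_{N,s}(h)=\int|\nabla(g+h)|^N-|\nabla g|^N-N|\nabla g|^{N-2}\nabla g\cdot\nabla h$. The first two terms cancel by dilation invariance of the $L^N$ gradient norm, understood as a regularized difference, since $|\nabla g|^N\sim|x|^{-N}$ is not integrable at infinity. So the key quantity is the cross term
$$-N\int|\nabla g|^{N-2}\nabla g\cdot\nabla h_T\,dx=N\int(-\Delta_N g_{N,s})\,h_T\,dx.$$
Using $-\Delta_N g_{N,s}=\frac{\widetilde{\omega_{N,s}}}{N}\mu_{N,s}$ (the Liouville equation solved by $g_{N,s}$), this becomes $\widetilde{\omega_{N,s}}\int h_T\,d\mu_{N,s}$, up to the boundary terms from the non-integrable cancellation.

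Concretely, I expect
$$D(h_T)=\int h_T\,d\mu_{N,s}\cdot(\text{sign})+\text{const},$$
and in fact from $I_s(h_T)=0$ we get $D(h_T)=\ln\int e^{h_T}d\mu_{N,s}-\int h_T\,d\mu_{N,s}$. Now $\int e^{h_T}d\mu_{N,s}=1$, because $e^{h_T}\mu_{N,s}=\mu_{N,s}(Tx)[T]^N$ is the pushforward probability density (the $|x|^s$ factors match since $(N-s)\ln t$ accounts for the scaling). Hence $D(h_T)=-\int h_T\,d\mu_{N,s}$, and I only need $\int h_T\,d\mu_{N,s}\to-\infty$. Now $h_{T_{t,0}}(x)=-N\ln\frac{1+|tx|^{\beta}}{1+|x|^\beta}+(N-s)\ln t$ with $\beta=\frac{N-s}{N-1}$. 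For large $t$ and fixed $x\ne0$ this is about $-N\beta\ln t-N\beta\ln|x|+N\ln(1+|x|^\beta)+(N-s)\ln t$. The coefficient of $\ln t$ is $(N-s)(1-\frac{N}{N-1})=-\frac{N-s}{N-1}<0$. Dominated convergence in the form $\int h_T\,d\mu\le -c\ln t+C$ then gives divergence, since $\ln|x|$ and $\ln(1+|x|^\beta)$ are $\mu_{N,s}$-integrable. Thus $I_{s,\lambda}(h_T)=-(\lambda-1)D(h_T)\to-\infty$.

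The main obstacle is justifying the uniform bound on $\int h_T\,d\mu_{N,s}$: for $|x|<1/t$ the approximation fails. There, however, $h_T\le (N-s)\ln t+N\ln 2$, and this region has $\mu_{N,s}$-mass $O(t^{-(N-s)})$, which is negligible. Membership $h_T\in\wmns$ is already provided by Theorem \ref{Invariance-MinimumIs}.
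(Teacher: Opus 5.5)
Your proof is correct. It departs from the paper's in how the equality case in (a) is obtained and in the final estimate in (b).

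\textbf{Part (a).} The paper writes $I_{s,\lambda}(u)=I_s(u)+(1-\lambda)\bigl(\ln\int_{\mathbb{R}^N}e^u\,d\mu_{N,s}-\int_{\mathbb{R}^N}u\,d\mu_{N,s}\bigr)$. It then gets the equality case from the equality case of Jensen's inequality for the exponential (Lemma \ref{ComparisionFunctionals}). You split $I_{s,\lambda}=\lambda I_s+(1-\lambda)D$ instead, and use strict positivity of $R_N(X,Y)$ for $Y\neq 0$, i.e.\ strict convexity of $|X|^N$. That is slightly more than Lemma \ref{Lemma 2.1} records, but it is standard. The two arguments are equally short.

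\textbf{Part (b).} Both routes reach $I_{s,\lambda}(h_T)=(\lambda-1)\int_{\mathbb{R}^N}h_T\,d\mu_{N,s}$. The paper gets there through Corollary \ref{CorImportant} and \eqref{integralht}. You get there through $I_s(h_T)=0$ and $\int_{\mathbb{R}^N}e^{h_T}\,d\mu_{N,s}=1$, so you still depend on Proposition \ref{functionalwithu=0}, which is where $I_s(h_T)=0$ comes from.

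The routes then diverge. The paper uses the closed form of $\J$ (Lemma \ref{L1product}, Remark \ref{asymptJ}), which gives exact asymptotics of $I_{s,\lambda}(h_{T_{t,0}})$ both as $t\to+\infty$ and as $t\to0^+$. Your pointwise upper bound avoids that explicit integration and is more elementary. It only gives an upper bound, and only as $t\to+\infty$, but that is all the statement needs.

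\textbf{Two minor points.}

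First, the separate treatment of $|x|<1/t$ is unnecessary. Since $\ln(1+a)\ge\ln a$ for every $a>0$, you have, for all $x\neq0$,
$h_{T_{t,0}}(x)\le-\frac{N-s}{N-1}\ln t+N\ln\bigl(1+|x|^{-\frac{N-s}{N-1}}\bigr)$.
The last term is $\mu_{N,s}$-integrable. Integrating this bound (which is not dominated convergence) gives $\int_{\mathbb{R}^N}h_{T_{t,0}}\,d\mu_{N,s}\le-\frac{N-s}{N-1}\ln t+C$ directly.

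Second, your heuristic that the two $|\nabla\cdot|^N$ terms ``cancel by dilation invariance'' is false. In fact
$\int_{B_r}\bigl(|\nabla(g_{N,s}\circ T_{t,0})|^N-|\nabla g_{N,s}|^N\bigr)dx=\int_{B_{tr}\setminus B_r}|\nabla g_{N,s}|^N\,dx\to\widetilde{\omega_{N,s}}\,\frac{N-s}{N-1}\ln t$ for $t>1$.
This is exactly the $\frac{N-s}{N-1}\ln[T]$ term in Proposition \ref{functionalwithu=0}. Your final argument never uses this heuristic, so the proof is unaffected.
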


	The optimality of the constant $1/\widetilde{\omega_{N,s}}$ in  \eqref{MT-SingularityNewSpace} can also be deduced as a consequence of the following result, inspired by \cite{DolEstJan-2015}.

	\begin{proposition}\label{propN}
		Let $N \geq 2$, $s \in [0,N)$ and  $\lambda>0$. Then
		\[G_\lambda:=\inf_{\substack{u\in W_{\mu_{N,s}}(\mathbb{R}^N) \\ \int_{\mathbb{R}^N}u \, d\mu_{N,s}>0}} \left(\frac{\frac{1}{\widetilde{\omega_{N,s}}}\int_{\mathbb{R}^N}H_{N,s}(u)\, dx+\lambda \int_{\mathbb{R}^N}u \,  d\mu_{N,s} }{\ln \left(\int_{\mathbb{R}^N}e^u \, d\mu_{N,s}\right)} \right)=\min\{\lambda,1\}.\]
	\end{proposition}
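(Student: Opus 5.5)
We prove the two inequalities $G_\lambda\ge\min\{\lambda,1\}$ and $G_\lambda\le\min\{\lambda,1\}$ separately, writing throughout
\[
A(u):=\frac{1}{\widetilde{\omega_{N,s}}}\int_{\mathbb{R}^N}H_{N,s}(u)\,dx,\qquad M(u):=\int_{\mathbb{R}^N}u\,d\mu_{N,s},\qquad L(u):=\ln\Big(\int_{\mathbb{R}^N}e^u\,d\mu_{N,s}\Big).
\]
Since $\mu_{N,s}$ is a probability measure (its normalization makes $\int d\mu_{N,s}=1$), Jensen's inequality gives $L(u)\ge M(u)>0$ on the admissible class, so the quotient is well defined and positive. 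We also need $A(u)\ge 0$. This holds because $R_N(X,Y)\ge 0$ pointwise, by convexity of $|\cdot|^N$.

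\textbf{Lower bound.} If $\lambda\ge1$, then $A+\lambda M\ge A+M\ge L$ by Theorem \ref{densitystatementANDextension}, so the quotient is at least $1$. If $\lambda<1$, write
\[
A+\lambda M=\lambda\,(A+M)+(1-\lambda)A\ge \lambda\,(A+M)\ge \lambda L,
\]
using $A\ge0$ and again \eqref{MT-SingularityNewSpace}. Hence $G_\lambda\ge\min\{\lambda,1\}$.

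\textbf{Upper bound $G_\lambda\le\lambda$.} Take $u=\varepsilon\varphi+c$ with $c>0$ a constant, $\varphi\in C_0^\infty$ fixed and non-constant. The term $A$ is unchanged by adding constants, since $H_{N,s}(u)$ depends only on $\nabla u$. We have $L(u)=c+L(\varepsilon\varphi)$ and $M(u)=c+\varepsilon M(\varphi)$. Letting $c\to+\infty$ with $\varepsilon$ fixed, the quotient tends to $\lambda$. So constants essentially realize the value $\lambda$.

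\textbf{Upper bound $G_\lambda\le 1$.} This is where we need a nontrivial family, and it is the main step. We would use the minimizers $h_T$ from Theorem \ref{Invariance-MinimumIs}, with $T=T_{t,0}$, which satisfy $A(h_T)+M(h_T)=L(h_T)$. Then, for $u=h_T+c$,
\[
\frac{A+\lambda M}{L}=\frac{L(h_T)+c+(\lambda-1)M(h_T)}{L(h_T)+c}.
\]
We need a regime where $(\lambda-1)M(h_T)$ is negligible relative to $L(h_T)+c$ while $M(h_T)+c>0$. The natural approach is to show $L(h_T)\to+\infty$ as $t\to\infty$ or $t\to0$ (concentration) while $M(h_T)$ stays comparable, which would give a ratio $\to$ something $\neq 1$. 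So we must compute the asymptotics explicitly, using
\[
h_T(x)=N\ln\frac{1+|x|^{a}}{1+t^a|x|^a}+(N-s)\ln t,\qquad a=\tfrac{N-s}{N-1}.
\]
One finds $\int e^{h_T}d\mu_{N,s}=1$ by the change of variables $y=tx$, so $L(h_T)=0$, and also $A(h_T)=-M(h_T)$. Hence for $u=h_T+c$ with $c>-M(h_T)$, so that $M(u)>0$, the quotient becomes
\[
\frac{-M(h_T)+\lambda\,(M(h_T)+c)}{c}.
\]
We choose $c=-M(h_T)+\delta$ with $\delta>0$ small and $|M(h_T)|\to\infty$ as $t\to\infty$. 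A direct computation gives $M(h_T)\sim -c_0\ln t\to-\infty$, since $h_T\approx -(N-s)\ln t+N\ln(1+|x|^a)$ away from the origin region. Then $c\approx|M(h_T)|$, and the quotient is
\[
\frac{|M(h_T)|+\lambda\delta}{|M(h_T)|+\delta}\to 1.
\]
Combining the two upper bounds gives $G_\lambda\le\min\{\lambda,1\}$.

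The main obstacle is verifying $M(h_T)\to-\infty$ (or more precisely that $|M(h_T)|\to\infty$) via an explicit integral asymptotic in $t$. We also need to confirm $h_T\in W_{\mu_{N,s}}$, which is guaranteed by Theorem \ref{Invariance-MinimumIs}.
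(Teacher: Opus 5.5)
Your proof is correct. The lower bound is the paper's argument with different bookkeeping. The paper writes $u=v+c$ with $\int_{\mathbb{R}^N} v\,d\mu_{N,s}=0$ and applies \eqref{MT-SingularityNewSpace} to $v$; your splitting $A+\lambda M=\lambda(A+M)+(1-\lambda)A$ with $A\ge 0$ is equivalent. The bound $G_\lambda\le\lambda$ via constants is the same as in the paper.

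For $G_\lambda\le 1$ you take a genuinely different route. The paper tests $F_\lambda$ on $\varepsilon\phi_{N,s}+c$, sends $c\to 0^+$ and then $\varepsilon\to 0^+$. This needs Lemma \ref{existencesolutionN} (zero mean of $\phi_{N,s}$ and the linearized equation, hence \eqref{eqphiN}). It also needs second-order expansions of $\int_{\mathbb{R}^N} H_{N,s}(\varepsilon\phi_{N,s})\,dx$ and of $\ln\int_{\mathbb{R}^N} e^{\varepsilon\phi_{N,s}}\,d\mu_{N,s}$, with an $L^1$ majorant for the third-order remainder.

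You use the exact extremals instead. By Corollary \ref{CorImportant}, $L(h_T)=0$ and $A(h_T)=-M(h_T)$. So at $h_T+c$ with $c=|M(h_T)|+\delta$, the quotient equals $(|M(h_T)|+\lambda\delta)/(|M(h_T)|+\delta)$ exactly, and no expansion or remainder estimate is needed. Since $\phi_{N,s}=\partial_t h_{T_{t,0}}|_{t=1}$, the paper's argument is the infinitesimal version of yours.

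The cost of your route is that it rests on the global identity of Proposition \ref{functionalwithu=0}. That makes it the same mechanism as the case $\lambda>1$ of Theorem \ref{InfLambdaLess1Bigger1}. The paper intends its argument as an independent, purely local proof of sharpness at $u\equiv 0$, and the same expansions also yield the sharp Poincar\'e inequality.

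Two small corrections to your write-up:
\begin{enumerate}
\item What you call the main obstacle is not one. For $T=T_{t,0}$, \eqref{integralht} gives $M(h_T)=\J-\frac{N-s}{N-1}\ln t$, and Remark \ref{asymptJ} gives $\J\to\frac{N}{N-1}$ as $t\to+\infty$, so $|M(h_T)|\to\infty$ is already available. It is not even needed: for any fixed $t\neq 1$, $h_T$ is non-constant, so strict Jensen gives $M(h_T)<L(h_T)=0$, and letting $\delta\to 0^+$ already sends the quotient to $1$.
\item Your heuristic for $h_T$ at large $t$ is wrong as written. For fixed $x\neq 0$ one has $h_T(x)=N\ln\bigl(1+|x|^{-\frac{N-s}{N-1}}\bigr)-\frac{N-s}{N-1}\ln t+o(1)$. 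Also remove the exploratory sentence about $L(h_T)\to+\infty$, since in fact $L(h_T)=0$.
\end{enumerate}
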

	
	Finally, based on \cite{LamLu-2018}, we extend the following Poincaré inequality to $\wmns$, proving also that it is sharp.
	
	\begin{proposition}\label{Poincare}
		Let $u\in W_{\mu_{N,s}}(\mathbb{R}^N)$, and let us denote  $\overline{u}:=\int_{\mathbb{R}^N} u \, d\mu_{N,s}$. Then
		\begin{align*}
			\int_{\mathbb{R}^N}|u-\overline{u}|^2\, d\mu_{N,s}\leq& \frac{N(N-2)}{\widetilde{\omega_{N,s}}} \int_{\mathbb{R}^N}|\nabla g_{N,s}|^{N-4}(\nabla g_{N,s}\cdot \nabla u)^2\, dx\\&+\frac{N}{\widetilde{\omega_{N,s}}}\int_{\mathbb{R}^N}|\nabla g_{N,s}|^{N-2}|\nabla u|^2\, dx.
		\end{align*}
		Moreover, equality is achieved for the function $\phi_{N,s}\in \wmns$ defined by
		\begin{equation}\label{DefPhiNs}
			\phi_{N,s}(x):=\frac{N-s}{N-1}\left(\frac{N-1-|x|^{\frac{N-s}{N-1}}}{1+|x|^{\frac{N-s}{N-1}}}\right).
		\end{equation}
	\end{proposition}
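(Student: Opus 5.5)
The plan is to obtain the Poincaré inequality as the second-order expansion of \eqref{MT-SingularityNewSpace} around the constant minimizer $u\equiv 0$. I first argue for $u\in C^\infty_0(\mathbb{R}^N)$ and then pass to $\wmns$ by the density part of Theorem \ref{densitystatementANDextension}. For $\varepsilon>0$ small I apply \eqref{MT-SingularityNewSpace} to $\varepsilon u$, which gives $I_s(\varepsilon u)\ge 0$. Both sides are then expanded in $\varepsilon$.

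The first expansion is of the gradient term. For fixed $X\neq 0$, $R_N(X,\varepsilon Y)=\frac{\varepsilon^2}{2}\, D^2(|\cdot|^N)(X)[Y,Y]+o(\varepsilon^2)$, and
$$D^2(|\cdot|^N)(X)[Y,Y]=N|X|^{N-2}|Y|^2+N(N-2)|X|^{N-4}(X\cdot Y)^2 .$$
With $X=\nabla g_{N,s}$ and $Y=\nabla u$, this yields
$$\int H_{N,s}(\varepsilon u)\,dx=\frac{\varepsilon^2}{2}\Big(N(N-2)\int|\nabla g_{N,s}|^{N-4}(\nabla g_{N,s}\cdot\nabla u)^2+N\int |\nabla g_{N,s}|^{N-2}|\nabla u|^2\Big)+o(\varepsilon^2).$$
To justify passing the limit under the integral I use the elementary bound $|R_N(X,Y)|\le C(|X|^{N-2}|Y|^2+|Y|^N)$ together with dominated convergence, after dividing by $\varepsilon^2$. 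This works since $u$ has compact support and $|\nabla g_{N,s}|\sim |x|^{\frac{N-s}{N-1}-1}$ near $0$, which is integrable to the required power. The set where $\nabla g_{N,s}=0$ is only $\{0\}$ and is harmless; the exponent $N-4$ appears multiplied by $(X\cdot Y)^2\le|X|^2|Y|^2$.

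The second expansion is of the exponential term. Since $\mu_{N,s}$ is a probability measure,
$$\ln\int e^{\varepsilon u}d\mu_{N,s}=\varepsilon\bar u+\frac{\varepsilon^2}{2}\int|u-\bar u|^2d\mu_{N,s}+o(\varepsilon^2).$$
Substituting both expansions into $I_s(\varepsilon u)\ge0$, the linear terms cancel. Dividing by $\varepsilon^2/2$ and letting $\varepsilon\to0$ gives exactly the claimed inequality.

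For general $u\in\wmns$ I take $u_k\in C^\infty_0$ converging to $u$ in $\|\cdot\|_{\mu_{N,s}}$. The right-hand side is continuous for this norm, because both weighted quadratic forms are controlled by $\int|\nabla u|^2|\nabla g_{N,s}|^{N-2}$, which appears in the norm. For the left-hand side I use Fatou along an a.e.-convergent subsequence, with $\bar u_k\to\bar u$ by $L^1(d\mu_{N,s})$ convergence. The main obstacle is therefore not the inequality itself but making the second-order expansion rigorous. I handle it by working with smooth compactly supported functions, where the exponential is bounded.

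For the equality case, $\phi_{N,s}$ should be (up to sign and scaling) the derivative $\frac{d}{dt}h_{T_{t,0}}\big|_{t=1}$ of the family of minimizers given by Theorem \ref{Invariance-MinimumIs}. Indeed, $h_{T_{t,0}}=g_{N,s}(tx)-g_{N,s}(x)+(N-s)\ln t$, and differentiating at $t=1$ gives $x\cdot\nabla g_{N,s}+(N-s)$. With $r=|x|$ and $a=\frac{N-s}{N-1}$, we have $g_{N,s}=c-N\ln(1+r^a)$, so
$$x\cdot\nabla g_{N,s}+(N-s)=(N-s)-\frac{Nar^a}{1+r^a}=a\,\frac{N-1-r^a}{1+r^a}=\phi_{N,s}.$$
This identifies $\phi_{N,s}$ as the tangent direction to this family of minimizers. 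Differentiating the Euler–Lagrange equation along the family shows that $\phi_{N,s}$ solves the linearized equation
$$-\mathrm{div}\big(A(x)\nabla\phi\big)=\frac{\widetilde{\omega_{N,s}}}{N}(\phi-\bar\phi)\mu_{N,s},$$
where $A=|\nabla g|^{N-2}I+(N-2)|\nabla g|^{N-4}\nabla g\otimes\nabla g$. Testing this equation with $\phi_{N,s}$, after checking $\phi_{N,s}\in\wmns$ and the decay needed to integrate by parts, gives the equality. If the integration by parts is delicate, I can instead verify the equality by a direct radial computation: both sides reduce to one-dimensional integrals in $r$ that are evaluated via the substitution $\rho=r^a$ and Beta-function identities.
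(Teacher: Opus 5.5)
Your proposal is correct and follows essentially the same route as the paper. The inequality comes from the second-order expansion of \eqref{MT-SingularityNewSpace} at the constant minimizer: the paper plugs in $\varepsilon(u-\overline{u})$ directly for $u\in\wmns$, whereas you first work in $C^\infty_0(\mathbb{R}^N)$ and pass to the limit by density and Fatou, which justifies the expansions more carefully. Equality comes from $\phi_{N,s}=\partial_t\bigl(g_{N,s}(tx)+(N-s)\ln t\bigr)\bigr|_{t=1}$ solving the linearized equation \eqref{Ec.LemN}, tested against itself as in \eqref{eqphiN}--\eqref{SPECIALeqphiN}.
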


	This paper is organized as follows: Section \ref{SectionSpace} is devoted to the properties of the space $\wmns$, and we prove Theorem \ref{densitystatementANDextension}. In addition, we obtain a Sobolev-type embedding result.
	Section \ref{SectionLiouville} focuses on the proof of Theorem \ref{Invariance-MinimumIs}, namely that the only critical points of the functional $I_s$ are minimizers, and the conformal invariance of $I_s$. To this end, we study the modified functional $I_{s,\lambda}$ defined in \eqref{functionalLambda} by proving  Theorem \ref{InfLambdaLess1Bigger1}, which allows us to obtain the optimality of the constant $1/\widetilde{\omega_{N,s}}$ in inequality \eqref{MT-SingularityNewSpace}. Such optimality is proved using different techniques in Section \ref{SectionOptimalConstant}, where we also extend the Poincaré inequality to $\wmns$. In Section \ref{SectionEquivalence}, we obtain the singular Carleson-Chang inequality \eqref{Chang-Carleson-Singularity}, and we prove in Theorem \ref{Equivalence} its equivalence with the singular $N$-dimensional Euclidean Onofri inequality \eqref{MT-SingularityNewSpace}. Finally, in Appendix \ref{app}, we detail the $C^1$ regularity of $I_{s,\lambda}$.
	
	\section{Framework space}\label{SectionSpace}
	
	\medskip
	\noindent
Given $N \geq 2$ and $s \in [0,N)$, 	let us define the weighted Sobolev space 
	\begin{align*}
		W_{\mu_{N,s}}(\mathbb{R}^N):=\{ \, u\in L^1(\mathbb{R}^N,d\mu_{N,s}): \; \;  |\nabla u|\in L^{N}(\mathbb{R}^N,dx); \; \;  |\nabla u|^2|\nabla g_{N,s}|^{N-2}\in L^1(\mathbb{R}^N,dx)\},
	\end{align*}
	endowed	with norm 
	\[\|u\|_{\mu_{N,s}}= \int_{\mathbb{R}^N} |u| \, d{\mu_{N,s}} + \|\nabla u\|_N + \left(\int_{\mathbb{R}^N}|\nabla u|^2 |\nabla g_{N,s}|^{N-2} \, dx\right)^{\frac{1}{2}}.\]
	Here and in the rest of the paper $\mu_{N,s}$,  $g_{N,s}$ and $H_{N,s}$ denote the quantities in \eqref{DefMuns} and \eqref{DefHns}. Moreover, $C$ refers to a generic positive constant whose value may change from line to line and its dependence on any specific variable will be indicated in the subscript (e.g. $C_{N}$, $C_{N,s}$).
	Let us recall the following result (see \cite[Lemma 2.1]{BorCinMan-2025}):
	
	\begin{lemma}\label{Lemma 2.1}
		For any $N\in \mathbb{N}$, $N \geq 2$, there exists a constant $C_{N}$ such that 
		$$ 0 \leq R_N(X,Y)\leq C_{N}\left(|Y|^N+|Y|^2|X|^{N-2}\right)  \qquad \forall \, X,Y\in \mathbb{R}^N . $$
		In particular, 
		$$
		0\le H_{N,s}(u)\le C_{N}(|\nabla u|^N + |\nabla u|^2 |\nabla g_{N,s}|^{N-2}).
		$$
	\end{lemma}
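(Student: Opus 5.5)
The statement to prove is Lemma \ref{Lemma 2.1}: $0\le R_N(X,Y)\le C_N(|Y|^N+|Y|^2|X|^{N-2})$. The second assertion follows at once by taking $X=\nabla g_{N,s}$ and $Y=\nabla u$. So the plan reduces to the pointwise inequality for $R_N$.

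\emph{Lower bound.} The function $F(Z)=|Z|^N$ is convex on $\mathbb{R}^N$ for $N\ge 2$, and it is $C^1$ with $\nabla F(X)=N|X|^{N-2}X$. Therefore
\[
R_N(X,Y)=F(X+Y)-F(X)-\nabla F(X)\cdot Y
\]
is exactly the remainder of the first-order Taylor expansion of a convex function. The inequality $R_N\ge 0$ is then the supporting hyperplane inequality.

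\emph{Upper bound.} I split into two regimes according to the size of $|Y|$ relative to $|X|$.

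\emph{Case $|Y|\ge \tfrac12|X|$.} Bound each term crudely:
\[
|X+Y|^N\le 3^N|Y|^N, \qquad \bigl|N|X|^{N-2}X\cdot Y\bigr|\le N|X|^{N-1}|Y|\le N2^{N-1}|Y|^N.
\]
The term $-|X|^N$ is nonpositive and can be dropped. This gives $R_N\le C_N|Y|^N$.

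\emph{Case $|Y|<\tfrac12|X|$.} Here $X\neq 0$, and the segment $X+tY$, $t\in[0,1]$, stays in the annulus $\tfrac12|X|\le |X+tY|\le \tfrac32|X|$, so it avoids the origin. Hence $F$ is $C^2$ along the segment, with
\[
D^2F(Z)=N|Z|^{N-2}I+N(N-2)|Z|^{N-4}\,Z\otimes Z, \qquad \|D^2F(Z)\|\le N(N-1)|Z|^{N-2}.
\]
Taylor's formula with integral remainder gives
\[
R_N(X,Y)=\int_0^1(1-t)\,D^2F(X+tY)[Y,Y]\,dt\le \tfrac12 N(N-1)\bigl(\tfrac32\bigr)^{N-2}|X|^{N-2}|Y|^2.
\]
(For $N=2$ the Hessian is constant and the bound is immediate.) Combining the two cases with $C_N$ the larger of the two constants proves the claim.

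\emph{Main obstacle.} The only delicate point is the possible lack of $C^2$ regularity of $F$ at the origin when $2\le N<3$. This is precisely why the case split is needed: in the second regime the segment avoids $0$, so the second-order Taylor expansion is legitimate. Everything else is routine.
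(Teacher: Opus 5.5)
Your proof is correct and complete. The paper gives no argument for this lemma; it only recalls it from \cite[Lemma 2.1]{BorCinMan-2025}, so your write-up is a self-contained proof rather than a reconstruction of one in the text. The lower bound is exactly the supporting-hyperplane inequality for the convex $C^1$ function $F(Z)=|Z|^N$. Both regimes of your upper bound check out, giving $C_N=\max\{3^N+N2^{N-1},\ \tfrac12 N(N-1)(3/2)^{N-2}\}$.

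One correction to your closing remark: the "main obstacle" does not exist. For every integer $N\ge 2$, and indeed for every real exponent $p\ge 2$, the map $F$ is $C^2$ on all of $\mathbb{R}^N$. For $N=2$ it is a quadratic form. For $N\ge3$ you have $\|D^2F(Z)\|\le N(N-1)|Z|^{N-2}\to 0$ as $Z\to0$, with $D^2F(0)=0$. Loss of $C^2$ regularity only happens for exponents below $2$.

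So the case split can be dropped. Taylor's formula along the whole segment gives in one line
\[
R_N(X,Y)\le \tfrac12 N(N-1)\,\bigl(|X|+|Y|\bigr)^{N-2}|Y|^2\le C_N\bigl(|X|^{N-2}|Y|^2+|Y|^N\bigr).
\]

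A first-order alternative uses only tools already in the paper. Write
\[
R_N(X,Y)=N\int_0^1\bigl(|X+tY|^{N-2}(X+tY)-|X|^{N-2}X\bigr)\cdot Y\,dt
\]
and apply \eqref{EstXY3} from Lemma \ref{EstGatoux} for $N\ge3$; for $N=2$ you simply have $R_2(X,Y)=|Y|^2$. This route avoids second derivatives entirely. Your split version is equally valid, just a few lines longer.
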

	
	The previous lemma directly implies the following integrability condition for $H_{N,s}$:
	
	\begin{corollary}
		Let $u\in  W_{\mu_{N,s}}(\mathbb{R}^N)$. Then 
		$$ \displaystyle \int_{\mathbb{R}^N}H_{N,s}(u)\,  dx<+\infty. $$
	\end{corollary}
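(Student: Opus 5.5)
The corollary follows at once from Lemma \ref{Lemma 2.1}. Applying the lemma pointwise with $X=\nabla g_{N,s}(x)$ and $Y=\nabla u(x)$ gives, for almost every $x\in\mathbb{R}^N$ (where $u$ is weakly differentiable, and away from the origin, where $g_{N,s}$ is smooth),
\[
0\le H_{N,s}(u)(x)\le C_N\left(|\nabla u(x)|^N+|\nabla u(x)|^2|\nabla g_{N,s}(x)|^{N-2}\right).
\]
So $H_{N,s}(u)$ is a nonnegative measurable function dominated by a combination of the two quantities that enter the definition of $\wmns$. The origin is a single point, hence a null set, so it plays no role in the integral.

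Integrating over $\mathbb{R}^N$, I would then write
\[
\int_{\mathbb{R}^N}H_{N,s}(u)\,dx\le C_N\left(\|\nabla u\|_N^N+\int_{\mathbb{R}^N}|\nabla u|^2|\nabla g_{N,s}|^{N-2}\,dx\right).
\]
The right-hand side is finite because $u\in\wmns$ means exactly that $|\nabla u|\in L^N(\mathbb{R}^N,dx)$ and $|\nabla u|^2|\nabla g_{N,s}|^{N-2}\in L^1(\mathbb{R}^N,dx)$. In fact the bound is $C_N\left(\|u\|_{\mu_{N,s}}^N+\|u\|_{\mu_{N,s}}^2\right)$.

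There is no real obstacle here. The only point worth a check is that $g_{N,s}$ is $C^1$ on $\mathbb{R}^N\setminus\{0\}$, so that $H_{N,s}(u)$ is a well-defined measurable function almost everywhere. This is clear from the explicit formula $g_{N,s}(x)=\ln\frac{N-s}{\omega_{N-1}}-N\ln\left(1+|x|^{\frac{N-s}{N-1}}\right)$.
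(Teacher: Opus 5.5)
Your proposal is correct and is exactly the paper's argument. The paper simply notes that the corollary follows directly from the pointwise bound of Lemma \ref{Lemma 2.1} with $X=\nabla g_{N,s}$ and $Y=\nabla u$. Integrating that bound, both terms on the right are finite by the definition of $\wmns$.
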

	
	\begin{lemma}\label{Sobole-NewSpace}
		If $u\in W^{1,N}(\mathbb{R}^N)$ is such that
		\begin{equation*}
			|\nabla u|^2|\nabla g_{N,s}|^{N-2}\in L^1(\mathbb{R}^N, dx),
		\end{equation*} 
		then $u\in W_{\mu_{N,s}}(\mathbb{R}^N)$.
	\end{lemma}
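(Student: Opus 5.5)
The definition of $\wmns$ has three requirements. Two of them are already given by the hypotheses: $|\nabla u|\in L^N(\mathbb{R}^N,dx)$ holds because $u\in W^{1,N}(\mathbb{R}^N)$, and the weighted gradient condition is assumed outright. So the plan is to prove only the remaining one,
$$u\in L^1(\mathbb{R}^N,d\mu_{N,s}), \qquad\text{that is,}\qquad \int_{\mathbb{R}^N}|u|\,\mu_{N,s}\,dx<+\infty .$$

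I will apply H\"older's inequality with exponents $N$ and $N'=\frac{N}{N-1}$:
$$\int_{\mathbb{R}^N}|u|\,\mu_{N,s}\,dx\le \|u\|_{L^N(\mathbb{R}^N)}\,\|\mu_{N,s}\|_{L^{N/(N-1)}(\mathbb{R}^N)} .$$
Since $u\in L^N$, it is enough to show that $\mu_{N,s}\in L^{N/(N-1)}(\mathbb{R}^N)$. I split this into the two regions where $\mu_{N,s}$ could fail to be integrable.

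\emph{Near the origin.} For $|x|\le 1$ we have $\mu_{N,s}\le C_{N,s}|x|^{-s}$. Hence $\mu_{N,s}^{N/(N-1)}\le C|x|^{-sN/(N-1)}$, which is integrable near $0$ exactly when
$$\frac{sN}{N-1}<N, \qquad\text{i.e.}\qquad s<N-1 .$$

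\emph{At infinity.} For large $|x|$ we have $\mu_{N,s}\sim C|x|^{-s-\frac{N(N-s)}{N-1}}$. Its $N'$-th power decays faster than $|x|^{-N}$, because
$$s+\frac{N(N-s)}{N-1}=N+\frac{N-s}{N-1}>N-1 .$$
So this region never causes a problem.

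The main obstacle is the range $s\in[N-1,N)$, where the H\"older estimate above fails near the origin. There I will instead use the local integrability of $u$ coming from Sobolev embedding. On the unit ball $B_1$, $u\in W^{1,N}(B_1)$ lies in $L^q(B_1)$ for every $q<\infty$. Choose $q$ large enough that its conjugate exponent $q'$ satisfies $sq'<N$; this is possible since $s<N$. Then H\"older on $B_1$ gives
$$\int_{B_1}|u|\,\mu_{N,s}\,dx\le \|u\|_{L^q(B_1)}\,\|\mu_{N,s}\|_{L^{q'}(B_1)}<+\infty .$$
Outside $B_1$ the $L^N$ estimate still applies, since $\mu_{N,s}\in L^{N'}(\mathbb{R}^N\setminus B_1)$ by the decay computed above.

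Putting the two regions together shows $u\in L^1(\mathbb{R}^N,d\mu_{N,s})$, so $u\in\wmns$. The argument only uses $u\in W^{1,N}(\mathbb{R}^N)$ and the explicit form of $\mu_{N,s}$.
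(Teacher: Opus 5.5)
Your proof is correct and uses the same mechanism as the paper: H\"older's inequality against $\mu_{N,s}$ combined with Sobolev embedding, exploiting $\mu_{N,s}\sim|x|^{-s}$ near the origin and its fast decay at infinity. The paper avoids your case split with a single global estimate $\|u\|_{q'}\|\mu_{N,s}\|_{q}$, taking $q>1$ close to $1$ (so that $sq<N$) and $q'>N$ via $W^{1,N}(\mathbb{R}^N)\hookrightarrow L^{q'}(\mathbb{R}^N)$. Your split into $B_1$ and its complement needs only the local embedding near the origin, and it actually works for every $s\in[0,N)$, so your first case is redundant.
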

	
	\begin{proof}
		
		Let $\displaystyle u \in  W^{1,N}(\mathbb{R}^N)$ such that $|\nabla u|^2|\nabla g_{N,s}|^{N-2}\in L^1(\mathbb{R}^N, dx)$.
		Then, $u\in W_{\mu_{N,s}}(\mathbb{R}^N)$ iff
		\begin{equation*}
			\int_{\mathbb{R}^N} \left| u\right| \, d\mu_{N,s} <+\infty.
		\end{equation*}
		Taking into account \eqref{DefMuns}, we have
		\begin{align*}
			\displaystyle
			& \mu_{N,s}(x) \sim |x|^{-s}, \quad \text{as } |x| \to 0;\\
			& \mu_{N,s}(x) \sim |x|^{-\frac{N^2-s}{N-1}}, \quad \text{as } |x| \to + \infty.
		\end{align*}
		
		\noindent
		Observe that $0 \leq s < N$ and $\frac{N^2-s}{N-1} > N$. Since $u \in  W^{1,N}(\mathbb{R}^N)$, by Sobolev embedding we know that $u \in L^t(\mathbb{R}^N,dx)$ for any $t \geq N$.
		Moreover, for any $s \in [0, N)$ there exists $q>1$ such that $sq < N$, $q\frac{N^2-s}{N-1} > N$ and $q'=\frac{q}{q-1} > N$. Therefore, Hölder's inequality with conjugate exponents $q$ and $q'$ implies that
		$$\displaystyle \int_{\mathbb{R}^N} |u| \, d\mu_{N,s}\leq \lVert u \rVert_{q'} \lVert \mu_{N,s} \rVert_{q} < + \infty.$$
		
	\end{proof}	
	
	\begin{remark}\label{proprietagradvnslnxs}
		Observe that for any $s \in [0,N)$, we have
		$$ \dis
		\nabla g_{N,s}(x) =-\frac{N(N-s)}{N-1} \frac{|x|^{\frac{1-s}{N-1}} }{1+|x|^{\frac{N-s}{N-1}}} \frac{x}{|x|}.
		$$
		In particular,
		\begin{align*}
			\left|\nabla g_{N,s}(x)\right|&\sim |x|^{-1}, \quad \text{as $|x|\rightarrow +\infty$;}
			\\
			\left|\nabla g_{N,s}(x)\right|&\sim |x|^{\frac{1-s}{N-1}}, \quad \text{as $|x|\rightarrow 0$.}
		\end{align*}
		Since $ \frac{N(1-s)}{N-1} > -N$, we get $| \nabla g_{N,s}|\in L^N_{loc}(\R^N,dx)$.
	\end{remark}
	
	\begin{remark}\label{Nlaplaciano}
		By direct computation it follows that
		\begin{equation*}
			\Delta_N g_{N,s}(x)=- \frac{\widetilde{\omega_{N,s}}}{N} \mu_{N,s}(x),
		\end{equation*}	
		for any $x \in \R^N\setminus\{0\}$.  Note that, if $N\ge 3$ or $N=2$ and $s>0$, the function $g_{N,s}$ is not of class $C^2$ near the origin. In fact, it is not of class $C^1$ if $s\ge 1$. Nevertheless, given a bounded open domain $\Omega\subseteq \R^N$ with smooth boundary, the standard Green identity 
		\begin{equation}\label{greenformula}
			\int_{\Omega}|\nabla g_{N,s}|^{N-2} \nabla g_{N,s} \cdot \nabla \varphi\, dx = \frac{\widetilde{\omega_{N,s}}}{N} \int_{\Omega} \varphi\, d\mu_{N,s} + \int_{\partial \Omega} \varphi\, |\nabla g_{N,s}|^{N-2} \nabla g_{N,s} \cdot\nu \, d\sigma\,
		\end{equation}
		holds even if $0\in \Omega$, for all functions $\varphi \in C^1(\overline{\Omega}\setminus\{0\})$ with $\varphi \in L^1(\Omega,d\mu_{N,s})$, $|\nabla \varphi ||\nabla g_{N,s}|^{N-1}\in L^1(\Omega,dx)$ and 
		$$
		|\varphi(x)| = o( |x|^{s-N} )  \quad \text{ as } |x|\to 0.
		$$
		This can be easily proved by applying the Green identity on $\Omega \setminus B_{\varepsilon}(0)$, for $\varepsilon$ sufficiently small, and  then passing to the limit as $\varepsilon \to 0$. Note that, by Remark \ref{proprietagradvnslnxs}, \eqref{greenformula} can be applied for example when $\varphi = g_{N,s}$. Moreover, by approximation with functions in $C^1(\overline \Omega)$ and the trace theorem, \eqref{greenformula} also holds for functions  $\varphi	\in W^{1,N}(\Omega)$.
	\end{remark}

	We now give the proof of Theorem \ref{densitystatementANDextension}. The starting point is the density result 
	\[W_{\mu_{N,s}}(\mathbb{R}^N)=\overline{C^{\infty}_0(\mathbb{R}^N)}^{\|\cdot\|_{\mu_{N,s}}}.\]
	To prove this result, we follow \cite{BorCinMan-2025} by first showing that any bounded function in $\wmns$ belongs to $\overline{C^{\infty}_0(\mathbb{R}^N)}^{\Vert{}\cdot\Vert{}_{\mu_{N,s}}}$ and then approximating via truncated functions. Since the argument is similar to that in \cite{BorCinMan-2025}, we only provide a sketch, highlighting the main novelties.
	
	\begin{proof}[\textbf{Proof of Theorem \ref{densitystatementANDextension}} ]
		As a first step, we show that for every $u_{\infty}\in W_{\mu_{N,s}}(\mathbb{R}^N)\cap L^{\infty}(\mathbb{R}^N)$, there exists a sequence   $ \{\tilde u_k\}_k \subset C^{\infty}_0(\mathbb{R}^N)$, such that  $\|\tilde u_k- u_{\infty}\|_{\mu_{N,s}}\rightarrow 0$. With this aim, for $k\in \mathbb N, k\ge 2$, we consider
		\begin{equation*}
			\eta_k(x):=
			\begin{cases}
				\displaystyle \frac{1}{\sqrt[k]{k}} & \text{ if } |x| \leq 1,  \\
				\displaystyle  \frac{1}{\sqrt[k]{|x|}} + \frac{1}{\sqrt[k]{k}} - 1 & \text{ if }  1 < |x| \leq k^*,  \\
				0 & \text{ if }  k^* < |x| ,
			\end{cases}
		\end{equation*}
		where $k^*:=1/\left(1-\frac{1}{\sqrt[k]{k}}\right)^k$ and  $u_k := u_\infty \eta_k $. Since $ u_{\infty}\in  L^\infty(\mathbb{R}^N)$, $ 0  \leq \eta_k \leq 1$, and $ \eta_k(x) \to 1 $ as $k \to +\infty $ for any $x \in \mathbb{R}^N$, from the Dominated Convergence Theorem we have $u_k \to u_{\infty}$ in $L^1(\mathbb{R}^N,d\mu_{N,s})$. Moreover,
		\begin{align}\label{Conv_B1}
			\displaystyle 
			& \int_{B_1} | \nabla(u_k-u_{\infty})|^2|\nabla g_{N,s}|^{N-2} \, dx = \left( 1 - \frac{1}{\sqrt[k]{k}} \right)^2 \int_{B_1} | \nabla u_{\infty}|^2|\nabla g_{N,s}|^{N-2} \, dx \rightarrow 0.
		\end{align}
		\noindent
		In addition, from \cite[Lemma 2.6]{BorCinMan-2025}, $\|\nabla ( u_{\infty} - u_k) \|_N \to 0$ and
		\begin{align}\label{Conv_Out_B1}
			\int_{\mathbb{R}^N}|\nabla(u_k-u_{\infty})|^2|\nabla g_{N,0}|^{N-2} \, dx\rightarrow 0.
		\end{align}
		\noindent
		Note that, since $g_{N,s}$ and $g_{N,0}$ have the same asymptotic behavior at infinity,  see Remark \ref{proprietagradvnslnxs}, we have in particular
		$|\nabla g_{N,s}|\leq C|\nabla g_{N,0}|$ in $\mathbb{R}^N\setminus B_1$. Then,  from \eqref{Conv_B1} and \eqref{Conv_Out_B1}, we get that  $|\nabla(u_k-u_\infty)|^2|\nabla g_{N,s}|^{N-2}\rightarrow 0$ in $L^1(\mathbb{R}^N,dx)$. Therefore $\|u_k-u_{\infty}\|_{\mu_{N,s}}\rightarrow 0$.\\
		We now approximate the sequence  $\{u_k\}_k$ with a sequence of smooth compactly supported functions. Observe that ${u_{k}}_{|_{B_{k^*}}} \in W_0^{1,N}(B_{k^*}),$ so there exists $\tilde{u}_k \in C_0^{\infty}(B_{k^*})$ such that 
		$$ \displaystyle \left\lVert {u_{k}}_{|_{B_{k^*}}}- \tilde{u}_k  \right\rVert_{L^N(B_{k^*},dx)} + \left\lVert \nabla \left({u_{k}}_{|_{B_{k^*}}}- \tilde{u}_k \right) \right\rVert_{L^N(B_{k^*},dx)} < \frac{1}{k}.$$
		We can extend to $0$ in $\mathbb{R}^N \setminus B_{k^*}$  and consider $\tilde{u}_k \in C_0^{\infty}(\mathbb{R}^N)$. Hence
		\begin{equation}\label{dis2} 
			\displaystyle \lVert u_k - \tilde{u}_k  \rVert_N + \lVert \nabla \left(u_k - \tilde{u}_k \right) \rVert_N  < \frac{1}{k}.
		\end{equation}
		
		\noindent
		As in the proof of Lemma \ref{Sobole-NewSpace}, let $q>1$ be such that $\mu_{N,s}\in L^q(\R^N,dx)$ and $q' = \frac{q}{q-1}\ge N$. Due to H\"older inequality and Sobolev embedding, we have
		\begin{align*}
			\displaystyle 
			\int_{\mathbb{R}^N} |u_k - \tilde{u}_k| \mu_{N,s} \, dx 
			& \leq \lVert u_k - \tilde{u}_k\rVert_{{q'}} \lVert \mu_{N,s}  \rVert_{{q}} \\
			& \leq C_{N,q}\left(\lVert  u_k -  \tilde{u}_k\rVert_{N}+ \lVert \nabla u_k - \nabla \tilde{u}_k\rVert_{N}\right)\lVert \mu_{N,s}  \rVert_{{q}} \\
			& < \frac{C_{N,q} \lVert \mu_{N,s}  \rVert_{q}}{k}=o(1),
		\end{align*} 
		
		\noindent
		where $o(1)$ denotes any quantity that goes to $0$ as $k \to + \infty$. If $N=2$, we're done. Otherwise, if $N \geq 3$, we have
		\begin{align*}
			\int_{0<|x|\leq k^*} |\nabla g_{N,s}(x)|^{N} \, dx 
			& = C_{N,s}\int_0^{k^*} \frac{\rho^{\frac{(1-s)N}{N-1}} }{(1+\rho^{\frac{N-s}{N-1}})^N}  \rho^{N-1} \, d \rho \\
			& \leq C_{N,s}\int_0^{1}  \frac{1}{\rho^{ \frac{N(s-1)}{N-1} +1 - N  }}   \, d \rho 
			+  C_{N,s}\int_1^{k^*} \frac{1}{\rho}   \, d \rho.
		\end{align*}
		
		\noindent
		Furthermore, since $s \in [0,N)$, we have  $N(s-1)/(N-1)+1 - N < 1$. Therefore,
		$$ \displaystyle \int_{0<|x|\leq k^*} |\nabla g_{N,s}(x)|^{N} \, dx \leq C_{N,s}( \ln \, k^* + 1).$$
		
		\noindent
		Then, due to \eqref{dis2} and H\"older inequality with exponents $\frac{N}{2}$ and $\frac{N}{N-2}$,
		\begin{align*}
			\displaystyle
			& \int_{\mathbb{R}^{N}}|\nabla (u_k-\tilde{u}_{k})|^2|\nabla g_{N,s}|^{N-2} \, dx 
			\leq  C_{N,s}\frac{(\ln k^*+1)^{\frac{N-2}{N}}}{k^2}.
		\end{align*}
		
		\noindent
		Moreover,
		\begin{align*}
			\displaystyle
			& \lim_{k \to + \infty} \frac{(\ln k^*+1)^{\frac{N-2}{N}}}{k^2}=0.
		\end{align*}
		
		\noindent
		Hence
		$$ \displaystyle \int_{\mathbb{R}^N}  |\nabla(u_k - \tilde{u}_k)|^2 |\nabla g_{N,s}|^{N-2} \, dx \to 0.$$
		
		\noindent
		Thus we have a sequence $\{\tilde{u}_k \}_k \subset C_0^{\infty}(\mathbb{R}^N)$ such that $\lVert u_{\infty} -\tilde{u}_k \rVert_{\mu_{N,s}} \to 0$ as $k \to +\infty$ and
		\begin{align*}
			\displaystyle
			\lVert u_{\infty} - \tilde{u}_k \rVert_{\mu_{N,s}} \leq  \lVert u_{\infty} -  u_k \rVert_{\mu_{N,s}} +  \lVert u_k - \tilde{u}_k \rVert_{\mu_{N,s}} \to 0 \quad \text{as } k \to + \infty.  
		\end{align*}
		
		Now, from \cite{BorCinMan-2025}, taking into account that  $dx$ is absolutely continuous with respect to the measure $d\mu_{N,s}$, we deduce that for every $u\in W_{\mu_{N,s}}(\mathbb{R}^N)$, there exists a sequence $u_{\infty,k}\in W_{\mu_{N,s}}(\mathbb{R}^N)\cap L^{\infty}(\mathbb{R}^N)$ such that $\|u-u_{\infty,k}\|_{\mu_{N,s}}\rightarrow 0$, when $k\rightarrow +\infty$. As a consequence, we obtain
		\begin{equation*}
			W_{\mu_{N,s}}=\overline{C^{\infty}_0(\mathbb{R}^N)}^{\|\cdot\|_{\mu_{N,s}}}.
		\end{equation*}
		Finally, inequality \eqref{MT-SingularityNewSpace} follows directly from applying the Dominated Convergence Theorem. Indeed, if $u\in \wmns$ and $u_n\in C^\infty_{0}(\R^N)$ satisfy $u_n\to u$ in $\wmns$, then, up to a subsequence, we have:
		$\nabla u_n \to \nabla u$ a.e. in $\R^N$ and 
		$$
		|\nabla u_n|^N + |\nabla u_n|^2 |\nabla g_{N,s}|^{N-2} \le U \in L^1(\R^N,dx). 
		$$
		Then by Lemma \ref{Lemma 2.1}, we get $H_{N,s}(u_n)\to H_{N,s}(u)$ in $L^1(\R^N,dx)$. Using the convergence of $u_n$ to $u$ in $L^1(\R^N,d\mu_{N,s})$ and  Fatou's lemma to estimate the exponential term, we get \eqref{MT-SingularityNewSpace}.
	\end{proof}
	
	Next, we use  the previous extension to deduce the continuous embedding of $\wmns$ in the space $L^q(\mathbb{R}^N, d \mu_{N,s})$ for every $1 \leq q < \infty$.

	\begin{proposition}\label{inclusionLq}
		Assume $N \geq 2$, $s\in [0,N)$ and $ 1 < q < \infty$. Then 
		$$ \dis W_{\mu_{N,s}}(\mathbb{R}^N) \hookrightarrow L^q( \mathbb{R}^N, d \mu_{N,s}).$$
	\end{proposition}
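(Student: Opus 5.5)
The plan is to deduce the embedding from the extended Onofri inequality \eqref{MT-SingularityNewSpace} of Theorem \ref{densitystatementANDextension}, by controlling exponential integrals and then polynomial moments. Fix $u\in\wmns$ and $t>0$. Since $\wmns$ is a vector space and $|{\pm}tu|$ still lies in the appropriate class, apply \eqref{MT-SingularityNewSpace} to $tu$ and to $-tu$. Adding the two inequalities gives
\[
\int_{\R^N} e^{t|u|}\,d\mu_{N,s}\le \int_{\R^N} (e^{tu}+e^{-tu})\,d\mu_{N,s}\le e^{\frac{1}{\widetilde{\omega_{N,s}}}\int H_{N,s}(tu)\,dx + t\overline{u}}+e^{\frac{1}{\widetilde{\omega_{N,s}}}\int H_{N,s}(-tu)\,dx - t\overline{u}},
\]
where $\overline u=\int u\,d\mu_{N,s}$. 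By Lemma \ref{Lemma 2.1},
\[
\int H_{N,s}(\pm tu)\,dx\le C_N\bigl(t^N\|\nabla u\|_N^N+t^2\textstyle\int|\nabla u|^2|\nabla g_{N,s}|^{N-2}\,dx\bigr).
\]
Moreover $|\overline u|\le\int|u|\,d\mu_{N,s}$. Hence every quantity in the exponents is controlled by $\|u\|_{\mu_{N,s}}$.

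Next, normalize. For $u\neq0$, choose $t=1/\|u\|_{\mu_{N,s}}$. Then each of the three seminorm pieces of $tu$ is at most $1$, and we obtain $\int e^{|u|/\|u\|_{\mu_{N,s}}}\,d\mu_{N,s}\le 2e^{2C_N/\widetilde{\omega_{N,s}}+1}=:K_{N,s}$, a constant independent of $u$. The elementary bound $r^q\le \Gamma(q+1)e^{r}$ for $r\ge0$ (from the power series of $e^r$) applied with $r=|u|/\|u\|_{\mu_{N,s}}$ then gives
\[
\int|u|^q\,d\mu_{N,s}\le \Gamma(q+1)K_{N,s}\,\|u\|_{\mu_{N,s}}^q,
\]
which is the desired continuous embedding. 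Note that $\mu_{N,s}$ is a probability measure, since $\int\mu_{N,s}=1$ by the normalization in \eqref{DefMuns}. So constants are harmless, and the case $q=1$ is trivial.

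The only delicate point is justifying that \eqref{MT-SingularityNewSpace} applies to $\pm tu$ with the exponential integral possibly infinite a priori. This is already covered by Theorem \ref{densitystatementANDextension}, which holds for all of $\wmns$; the Fatou argument in its proof shows the left side is finite. The rest is routine, and I expect no real obstacle beyond keeping track of the normalization constants.
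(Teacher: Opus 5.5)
Your proof is correct and follows essentially the paper's route: the inequality \eqref{MT-SingularityNewSpace} gives a uniform bound on $\int e^{|u|}\,d\mu_{N,s}$ for normalized $u$, and the exponential series then controls $\int|u|^q\,d\mu_{N,s}$. The only departures are that $e^{t|u|}\le e^{tu}+e^{-tu}$ replaces the paper's appeal to $|u|\in\wmns$, and a bound valid for real $q$ replaces integer $q$ plus interpolation. That bound needs one small fix: the power series justifies $r^q\le\Gamma(q+1)e^r$ only for integer $q$, but it holds for every real $q>0$ since $\max_{r\ge0}r^qe^{-r}=q^qe^{-q}\le\int_q^\infty r^qe^{-r}\,dr\le\Gamma(q+1)$.
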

	
	\begin{proof}
		Let $u\in \wmns$ be such that $\|u\|_{\mu_{N,s}}\le 1$. Taking into account \cite[Lemma A.4, Chapter 2]{KS}, we have  $ |u| \in W_{\mu_{N,s}}(\mathbb{R}^N)$, so using Theorem \ref{densitystatementANDextension},
		\begin{equation*}
			\ln\left( \int_{\mathbb{R}^N} e^{|u|} \, d\mu_{N,s} \right)\leq \frac{1}{\widetilde{\omega_{N,s}}}\int_{\mathbb{R}^N} H_{N,s}(|u|) \, dx + \int_{\mathbb{R}^N} |u| \, d\mu_{N,s}.
		\end{equation*}
		
		\noindent
		By applying Lemma \ref{Lemma 2.1} and once again \cite[Lemma A.4, Chapter 2]{KS}, there exists $C_{N} >0$ such that for almost every $x \in \mathbb{R}^N$ we get
		$$ \dis  H_{N,s}(|u(x)|)  \leq C_{N} (|\nabla u(x)|^N + | \nabla u(x)|^2 | \nabla g_{N,s}(x)|^{N-2}).$$
		As a consequence, since  $\|u\|_{\mu_{N,s}}\le 1$, for some $a_{N,s}>0$, we have
		\begin{align*}
			\ln\left( \int_{\mathbb{R}^N} e^{|u|} \, d\mu_{N,s} \right)\leq   a_{N,s} \|u\|_{\mu_{N,s}},
		\end{align*}
		
		\noindent
		by which
		\begin{align*}
			\int_{\mathbb{R}^N} e^{|u|} \, d\mu_{N,s} \leq e^{a_{N,s}\|u\|_{\mu_{N,s}}}.
		\end{align*}
		
		\noindent
		Furthermore, recalling the Taylor expansion of the exponential, we infer that for any integer $q\geq 1$
		$$\dis 1+ \frac{1}{q!} |u|^q \leq  \sum_{n=0}^{\infty} \frac{|u|^n}{n!} = e^{|u|}.$$
		
		\noindent
		Altogether, we have proved
		$$\dis \intrn |u|^q  \, d\mu_{N,s}  \leq  q!\left( e^{a_{N,s}\|u\|_{\mu_{N,s}}}-1\right) \le q! \left( e^{a_{N,s}}-1\right), $$
		which implies that $ \dis W_{\mu_{N,s}}(\mathbb{R}^N) \hookrightarrow L^q( \mathbb{R}^N, d \mu_{N,s}).$ By interpolation, we get the thesis for any real number $q \geq 1$. 
		
	\end{proof}

	\section{Liouville type problems and properties of $I_{s,\lambda}$}\label{SectionLiouville}

	In this Section, we focus on the connection between the critical points of the functional $I_{s}$ and the solutions of the Liouville-type equation \eqref{LiovilleWithIntegral}. In particular, we are specially interested in the case $\int_{\mathbb{R}^N}e^w/|x|^s \, dx=1$, so that \eqref{LiovilleWithIntegral} rewrites as
	\begin{equation}\label{LiovilleNoIntegral}
		-\Delta_N w= \frac{\widetilde{\omega_{N,s}}}{N} \frac{e^w}{|x|^s} \text{ in $\mathbb{R}^N$}.
	\end{equation}
	
	In the literature it is possible to find different concepts of weak solution to \eqref{LiovilleWithIntegral}. Throughout the paper, we deal with the following notion.
	\begin{definition}\label{solution}
		A function $w\in W^{1,N}_{loc}(\mathbb{R}^N)$ such that $e^w/|x|^s\in L^1(\mathbb{R}^N,dx)$, is a weak solution of \eqref{LiovilleWithIntegral} if it satisfies 
		\begin{equation*}
			\int_{\mathbb{R}^N}|\nabla w|^{N-2}\nabla w\cdot\nabla \rho \, dx=  \frac{\widetilde{\omega_{N,s}}}{N} \frac{\int_{\mathbb{R}^N}e^w\rho/|x|^s \, dx}{\int_{\mathbb{R}^N}e^w/|x|^s \, dx}
		\end{equation*}
		for all $\rho\in C^{\infty}_0(\mathbb{R}^N)$.
	\end{definition}
	\begin{remark}
		It is equivalent if the test functions belong to $C^1_{0}(\mathbb{R}^N)$, so the definition of solution coincides with the notion of solutions in \cite{Esp-2021} and  \cite{PraTar-2001}. Moreover, by density, it is equivalent if $$\rho \in \overline{H}:=\{\rho\in W^{1,N}_0(\Omega):\Omega\subset \mathbb{R}^N \text{ bounded}\},$$ so it also coincides with the concept dealt in \cite{Esp-2018} for the nonsingular case. 
	\end{remark}

	We point out that the functional $I_{s,\lambda}$ is of class $C^1$ on the space $\wmns$. In particular, a function $u\in \wmns$ is a critical point of $I_{s,\lambda}$ iff: 
	\begin{equation}\label{criticalpoints}
		\begin{aligned}
			\frac{N}{\widetilde{\omega_{N,s}}}\int_{\mathbb{R}^N}  \Bigl[ \, |\nabla g_{N,s}+\nabla u|^{N-2}(\nabla g_{N,s}+\nabla u)  - |\nabla g_{N,s}|^{N-2}\nabla g_{N,s} \, \Bigr]  \cdot \nabla v \, dx \\
			= \lambda\frac{\dis \int_{\mathbb{R}^N}e^u v \, d\mu_{N,s}}{ \dis \int_{\mathbb{R}^N}e^u \, d\mu_{N,s}}- \lambda\int_{\mathbb{R}^N} v \, d\mu_{N,s},
		\end{aligned}
	\end{equation}
	for any $v\in \wmns$. For the reader's convenience, the proof of the $C^1$ regularity is postponed to Appendix \ref{app}.

	\begin{remark}\label{app.Critical}
		If $u\in W_{\mu_{N,s}}(\mathbb{R}^N)$, then
		$$ \langle I_{s,\lambda}'(u), v \rangle =0  \quad \forall \,  v \in \wmns \qquad \iff  \qquad   \langle I_{s,\lambda}'(u), v \rangle =0  \quad \forall \, v \in C^{\infty}_0(\mathbb{R}^N).$$	
		The implication “ $\Rightarrow"$ is immediate. Conversely, we note that for any $v \in \wmns$ there exists a sequence $\{v_n\}_n \subset C_0^{\infty}(\mathbb{R}^N)$ such that $\lVert v_n - v \rVert_{\mu_{N,s}} \to 0$, and then we apply the continuity of the map $I_{s,\lambda}'(u): \wmns \to \mathbb{R}$.	
	\end{remark}

	\begin{lemma}\label{ConectionLioville}
		Let $N\geq 2$, and $s\in[0,N)$. A function $u\in W_{\mu_{N,s}}(\mathbb{R}^N)$ is a critical point of $I_{s}$ if and only if $w=u+g_{N,s}$ is a  weak solution to \eqref{LiovilleWithIntegral}. 
	\end{lemma}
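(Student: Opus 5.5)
The plan is to compare the Euler--Lagrange equation \eqref{criticalpoints} (with $\lambda=1$) against Definition \ref{solution}, using the Green identity \eqref{greenformula} to turn the term containing $|\nabla g_{N,s}|^{N-2}\nabla g_{N,s}$ into an integral against $\mu_{N,s}$.

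\textbf{Preliminary facts.} First record that $w=u+g_{N,s}\in W^{1,N}_{loc}(\mathbb{R}^N)$. This holds because $|\nabla u|\in L^N$ and, by Remark \ref{proprietagradvnslnxs}, $|\nabla g_{N,s}|\in L^N_{loc}$; moreover $g_{N,s}$ is locally bounded up to a logarithm, so it lies in $L^N_{loc}$. Next, $e^w/|x|^s=\tfrac{N-s}{\omega_{N-1}}\,e^u\mu_{N,s}\cdot\tfrac{\omega_{N-1}}{N-s}$, since $e^{g_{N,s}}=|x|^s\mu_{N,s}$. Hence $\int e^w/|x|^s\,dx=\int e^u\,d\mu_{N,s}$, and this is finite by Theorem \ref{densitystatementANDextension}, because the right-hand side of \eqref{MT-SingularityNewSpace} is finite.

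\textbf{Rewriting the Euler--Lagrange equation.} By Remark \ref{app.Critical}, $u$ is critical if and only if \eqref{criticalpoints} with $\lambda=1$ holds for all $v\in C_0^\infty(\mathbb{R}^N)$. Fix such a $v$ and pick a large ball $B_R\supset \operatorname{supp} v$. Apply \eqref{greenformula} on $\Omega=B_R$ with $\varphi=v$. The hypotheses hold: $v$ is smooth and bounded, and $|\nabla v||\nabla g_{N,s}|^{N-1}\in L^1$ because $|\nabla g_{N,s}|^{N-1}\in L^1_{loc}$. Since $v=0$ on $\partial B_R$, this gives
\[
\int_{\mathbb{R}^N}|\nabla g_{N,s}|^{N-2}\nabla g_{N,s}\cdot\nabla v\,dx=\frac{\widetilde{\omega_{N,s}}}{N}\int_{\mathbb{R}^N}v\,d\mu_{N,s}.
\]
Substituting into \eqref{criticalpoints}, multiplied by $\widetilde{\omega_{N,s}}/N$, the terms $\frac{\widetilde{\omega_{N,s}}}{N}\int v\,d\mu_{N,s}$ cancel exactly. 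The condition becomes
\[
\int|\nabla w|^{N-2}\nabla w\cdot\nabla v\,dx=\frac{\widetilde{\omega_{N,s}}}{N}\frac{\int e^u v\,d\mu_{N,s}}{\int e^u\,d\mu_{N,s}}.
\]
Using $e^u\mu_{N,s}=c\,e^w/|x|^s$ with the constant $c=(N-s)/\omega_{N-1}$, which cancels in the ratio, this is precisely the weak formulation of \eqref{LiovilleWithIntegral} in Definition \ref{solution} with test function $\rho=v$.

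\textbf{Equivalence and the main point to check.} Every step above is reversible, so both directions of the equivalence follow at once. The one point needing care is the application of \eqref{greenformula} near the origin when $s\ge1$, where $\nabla g_{N,s}$ is singular. There the required conditions are integrability of $|\nabla g_{N,s}|^{N-1}$ near $0$, which holds since the exponent $\frac{1-s}{N-1}(N-1)=1-s>-N$, and boundedness of $v$, which is trivially $o(|x|^{s-N})$. Both have already been verified in Remark \ref{Nlaplaciano}, so I expect no real obstacle beyond stating them.
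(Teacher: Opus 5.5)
Your proposal is correct and follows the paper's proof. Like the paper, you rewrite the Euler--Lagrange equation with $\lambda=1$ and use the Green identity of Remark \ref{Nlaplaciano} to replace the $|\nabla g_{N,s}|^{N-2}\nabla g_{N,s}$ term by $\frac{\widetilde{\omega_{N,s}}}{N}\int v\,d\mu_{N,s}$, and you get the converse from Remark \ref{app.Critical}. Two small fixes are needed: $e^w/|x|^s=e^u\mu_{N,s}$ holds exactly (your constant is $c=1$, not $\frac{N-s}{\omega_{N-1}}$, since $g_{N,s}$ already contains that factor), and for $w\in W^{1,N}_{loc}(\mathbb{R}^N)$ you must also check $u\in L^N_{loc}$, which the paper obtains from Proposition \ref{inclusionLq} together with $\mu_{N,s}^{-1}\in L^\infty(B)$ on balls.
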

	\begin{proof}		
		First, by $g_{N,s} \in L_{loc}^{\infty}(\mathbb{R}^N)$ and $|\nabla g_{N,s}| \in L^N_{loc}(\mathbb{R}^N,dx)$, we get $g_{N,s}\in W^{1,N}_{loc}(\mathbb{R}^N)$.  Notice now that any  $u\in W_{\mu_{N,s}}(\mathbb{R}^N)$ is such that $\left| \nabla u \right| \in L_{loc}^N (\mathbb{R}^N, dx)$. 
		In addition,  for any ball $B\subset \mathbb{R}^N$ we have $\mu_{N,s}^{-1}\in L^{\infty }(B)$, and by Proposition \ref{inclusionLq} we deduce
		\begin{align*}
			\int_{B} |u|^N \, dx =&\int_{B} \mu_{N,s}^{-1} |u|^N\mu_{N,s} \, dx\leq  \| \mu_{N,s}^{-1}\|_{L^{\infty}(B)}\int_{\mathbb{R}^N} |u|^N\, d\mu_{N,s}<+\infty.
		\end{align*}
		As a consequence, $u+g_{N,s}\in W^{1,N}_{loc}(\mathbb{R}^N)$. Moreover, since $u$ is a critical point of $I_s$,  for any $\rho\in C^{\infty}_0(\mathbb{R}^N)$ we have		
		\begin{align*}
			\langle  I'_{s}(u), \rho \rangle=&\frac{N}{\widetilde{\omega_{N,s}}}\int_{\mathbb{R}^N}\bigg(|\nabla g_{N,s}+\nabla u|^{N-2}(\nabla  g_{N,s}+\nabla u) -|\nabla g_{N,s}|^{N-2}\nabla g_{N,s}\bigg)\cdot\nabla \rho \, dx
			\\&+\int_{\mathbb{R}^N}\rho \,d\mu_{N,s} -\frac{\int_{\mathbb{R}^N}e^u\rho\, d\mu_{N,s}}{\int_{\mathbb{R}^N}e^u\,d\mu_{N,s}}=0,
		\end{align*}
		that is $u$ is a weak solution of
		\begin{equation*}
			-\frac{N}{\widetilde{\omega_{N,s}}}\left(\Delta_N(g_{N,s}+u)-\Delta_N g_{N,s} \right)+\mu_{N,s}-\frac{e^u\mu_{N,s}}{\int_{\mathbb{R}^N} e^u \, d\mu_{N,s}}=0.
		\end{equation*}
		By Remark \ref{Nlaplaciano}, we deduce that $u$ solves in a weak sense
		\begin{equation*}
			-\Delta_N(g_{N,s}+u)=\frac{\widetilde{\omega_{N,s}}}{N}\frac{e^u\mu_{N,s}}{\int_{\mathbb{R}^N} e^u \, d\mu_{N,s}}=\frac{\widetilde{\omega_{N,s}}}{N}\frac{e^{u+g_{N,s}}/|x|^s}{\int_{\mathbb{R}^N} e^{u+g_{N,s}}/|x|^s \, dx}.
		\end{equation*}
		Therefore, $w=u+g_{N,s}$ is a weak solution of \eqref{LiovilleWithIntegral}.

		Conversely, if $w = u +g_{N,s}$ is a weak solution of \eqref{LiovilleWithIntegral} with $u\in \wmns$,  by using Remark \ref{app.Critical} we get that  $u$ is a critical point of $I_s$. 
	\end{proof}

	Before studying the behavior of $I_{s,\lambda}$ under the transformations in  Theorem \ref{Invariance-MinimumIs}, let us state some technical preliminary  results. Throughout this section, we let $\mG$ and $h_T$ be defined as in \eqref{G} and \eqref{notation}. 
	
	\begin{lemma}\label{prophT}
		For any $T \in \mG$, we have $h_T \in \wmns$. Moreover, we have
		\begin{itemize}
			\item $\displaystyle{\lim_{|x|\to +\infty} h_T(x) = -\frac{N-s}{N-1}\ln[T]}$;
			\item If $s=0$ and $T=T_{t,x_0}$ with $x_0 \neq 0$, then  $|\nabla h_T | = O\left(\frac{|\nabla g_{N,s}|}{|x|}\right)$ as $|x|\to +\infty$;
			\item If $T=T_{t,0}$ with $t\neq 1$, then
			\begin{equation}\label{gradientHTinfinity}
				|\nabla h_T(x)| \sim |x|^{\frac{1+s-2N}{N-1}}\quad \text{ as $|x|\rightarrow +\infty,$ }
			\end{equation}
			and 
			\begin{equation}\label{gradientHTzero}
				|\nabla h_T(x)|  \sim |x|^{\frac{1-s}{N-1}}\quad \text{ as $|x|\rightarrow 0.$ }
			\end{equation}
		\end{itemize} 
		Moreover, the  identity 
		\begin{equation}\label{fundamentalid}
			\mu_{N,s}(x) e^{h_T(x)} = [T]^N \mu_{N,s}(T x)
		\end{equation} 
		holds for any $x\in \R^N$ if $s=0$, and for any $x\in \R^N\setminus \{0\}$ if $s>0$.
	\end{lemma}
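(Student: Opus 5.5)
Everything here follows from the explicit formula for $g_{N,s}$, so I will begin with the closed form of $h_T$. Write $a:=\frac{N-s}{N-1}$, so that $g_{N,s}(x)=\ln\frac{N-s}{\omega_{N-1}}-N\ln(1+|x|^{a})$. For $T=T_{t,x_0}$ we have $[T]=t$, hence
\[
h_T(x)=N\ln\frac{1+|x|^{a}}{1+t^{a}|x-x_0|^{a}}+(N-s)\ln t .
\]
Since $N a=\frac{N(N-s)}{N-1}$ and $N-s-\frac{N(N-s)}{N-1}=-\frac{N-s}{N-1}$, the ratio tends to $t^{-a}$ as $|x|\to\infty$. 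This gives the limit $(N-s)\ln t-Na\ln t=-\frac{N-s}{N-1}\ln[T]$.

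The identity \eqref{fundamentalid} is a direct substitution. We have $\mu_{N,s}(x)e^{h_T(x)}=e^{g_{N,s}(x)}|x|^{-s}e^{g_{N,s}(Tx)-g_{N,s}(x)}t^{N-s}=e^{g_{N,s}(Tx)}t^{N-s}|x|^{-s}$. If $s>0$, then $x_0=0$, so $|x|^{-s}=t^{s}|Tx|^{-s}$, and the expression equals $t^N\mu_{N,s}(Tx)$ for $x\neq0$. If $s=0$, the weight is absent and the identity holds everywhere.

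For the gradient asymptotics I will use Remark \ref{proprietagradvnslnxs}, namely $\nabla g_{N,s}(y)=-Na\,\frac{|y|^{a-1}}{1+|y|^{a}}\frac{y}{|y|}$, together with $\nabla h_T(x)=t\nabla g_{N,s}(Tx)-\nabla g_{N,s}(x)$.

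When $x_0=0$, both terms are radial, and
\[
|\nabla h_T(x)|=Na|x|^{a-1}\Bigl|\frac{t^{a}}{1+t^{a}|x|^{a}}-\frac{1}{1+|x|^{a}}\Bigr| =Na|x|^{a-1}\frac{|t^{a}-1|}{(1+t^a|x|^a)(1+|x|^a)} .
\]
Since $t\neq1$, the numerator is nonzero. Near $0$ this behaves like $|x|^{a-1}=|x|^{\frac{1-s}{N-1}}$, and near infinity like $|x|^{-a-1}=|x|^{\frac{1+s-2N}{N-1}}$, which are \eqref{gradientHTzero} and \eqref{gradientHTinfinity}.

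When $s=0$ and $x_0\neq0$, we have $a=\frac{N}{N-1}$. Set $F(y)=\frac{|y|^{a-2}y}{1+|y|^a}$, which is smooth away from the origin with $|DF(y)|=O(|y|^{-2})$. Then
\[
\nabla h_T(x)=-Na\bigl[tF(t(x-x_0))-F(x)\bigr].
\]
I will split this as $t[F(t(x-x_0))-F(tx)]+[tF(tx)-F(x)]$. The first bracket is $O(|x|^{-2})$ by the mean value theorem. The second is the radial case already computed, hence $O(|x|^{-a-1})=O(|x|^{-2})$ up to a power that is at least as good. Since $|\nabla g_{N,0}|\sim|x|^{-1}$, this gives $O(|\nabla g|/|x|)$.

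Finally I need $h_T\in\wmns$. Being smooth except possibly at $0$ and bounded at infinity, $h_T$ is bounded on $\mathbb{R}^N$: near $0$ the ratio inside the logarithm is continuous and positive. Hence $h_T\in L^1(d\mu_{N,s})$. For $|\nabla h_T|\in L^N$, local integrability comes from Remark \ref{proprietagradvnslnxs}, and at infinity $|\nabla h_T|^N=O(|x|^{-2N})$, which is integrable. For the weighted term, near infinity $|\nabla h_T|^2|\nabla g|^{N-2}=O(|x|^{-4-(N-2)})=O(|x|^{-N-2})$, which is integrable. Near $0$, when $x_0=0$, the integrand is $\sim|x|^{\frac{N(1-s)}{N-1}}$, which is integrable because $\frac{N(1-s)}{N-1}>-N$. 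When $x_0\neq0$ and $s=0$, everything is locally bounded except the mild singularity $|x-x_0|^{\frac{1}{N-1}}$ at $x_0$, which is harmless.

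The main obstacle is the case $x_0\neq0$, where the mean value splitting must be done carefully so that the decay comes out as $|x|^{-2}$ and not merely $|x|^{-1}$. The other steps are routine substitutions into the explicit formula.
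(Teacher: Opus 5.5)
Your proof follows the paper's route. The closed form of $h_T$ gives the limit, the radial factorization $\nabla h_T=\frac{t^{a}-1}{1+t^{a}|x|^{a}}\,\nabla g_{N,s}$ gives both asymptotics, and \eqref{fundamentalid} is the same direct substitution. Your mean value splitting for $x_0\neq0$ is a more explicit justification of the bound $O(|\nabla g_{N,s}|/|x|)$ that the paper simply asserts after factoring out $|\nabla g_{N,s}|$.

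One estimate in the membership step is wrong as written. You use $|\nabla h_T|=O(|x|^{-2})$ at infinity in every case. But for $x_0=0$, $t\neq1$ and $s>1$, your own formula gives $|\nabla h_T|\sim|x|^{-a-1}$ with $a+1=\frac{2N-1-s}{N-1}<2$, so the bounds $O(|x|^{-2N})$ and $O(|x|^{-N-2})$ fail there. The conclusion is unaffected: $|\nabla h_T|^N\sim|x|^{-N(a+1)}$ and $|\nabla h_T|^2|\nabla g_{N,s}|^{N-2}\sim|x|^{-N-2a}$ are both integrable at infinity precisely because $a=\frac{N-s}{N-1}>0$.
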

	\begin{proof}
		If $T\in \mG$, then there exist $t>0$ and $x_0\in \R^N$ such that $h_T(x) = g_{N,s}(t (x-x_0))- g_{N,s}(x)+(N-s) \ln t$, with $x_0 =0$ when $s>0$.  Using the explicit expression of $g_{N,s}$ we get
		\begin{align*}
			h_T(x) =N\ln\left(\frac{1+|x|^{\frac{N-s}{N-1}}}{1+|t(x-x_0)|^{\frac{N-s}{N-1}}}\right) + (N-s) \ln t 
		\end{align*}
		from which we get
		$$
		\lim_{|x|\to +\infty} h_T(x) = \left(-N\frac{N-s}{N-1} +N-s\right) \ln t = - \frac{N-s}{N-1}\ln t.
		$$
		Since $h_T \in C(\R^N)$, we conclude that $h_T\in L^\infty(\R^N)\subseteq L^1(\R^N,d\mu_{N,s})$.
		Still by direct computation, we have 
		$$\begin{aligned}
			\nabla h_T(x)
			=
			\frac{N(N-s)}{N-1}
			\left[
			\frac{
				|x|^{\frac{1-s}{N-1}}
			}{
				1+|x|^{\frac{N-s}{N-1}}
			} \frac{x}{|x|}
			-
			\frac{
				t^{\frac{N-s}{N-1}}
				|x-x_0|^{\frac{1-s}{N-1}}
			}{
				1+t^{\frac{N-s}{N-1}}
				|x-x_0|^{\frac{N-s}{N-1}}
			}\frac{x-x_0}{|x-x_0|}
			\right].
		\end{aligned}
		$$
		If $x_0 = 0$ (which is true whenever $s>0$), we directly get
		\begin{equation*}
			\nabla h_T(x) =  	\left(\frac{t^{\frac{N-s}{N-1
				}} - 1}{1+|tx|^{\frac{N-s}{N-1}}}\right) \nabla g_{N,s}(x).
		\end{equation*}
		Hence \eqref{gradientHTinfinity},  and \eqref{gradientHTzero} hold by Remark \ref{proprietagradvnslnxs}.
		
		If $s=0$ and $x_0 \neq 0$, we have 
		$$
		|\nabla h_T| = |\nabla g_{N,s}| \left| \frac{x}{|x|}- \frac{
			t^{\frac{N}{N-1}}
			|x-x_0|^{\frac{1}{N-1}}
			\left(1+|x|^{\frac{N}{N-1}}\right)}{
			\left(	1+t^{\frac{N}{N-1}}
			|x-x_0|^{\frac{N}{N-1}}\right)|x|^{\frac{1}{N-1}}
		}\frac{x-x_0}{|x-x_0|}\right| =O\left(\frac{|\nabla g_{N,s}|}{|x|}\right).$$
		In any case, it follows that $|\nabla h_T|\in L^N(\R^N, dx)$ and $|\nabla h_T|^{2}|\nabla g_{N,s}|^{N-2}\in L^1(\R^N, dx)$, so that $h_T\in \wmns$.  Finally, we observe that 
		$$
		[T]^N \mu_{N,s}(Tx) = [T]^N \frac{e^{g_{N,s}(Tx)}}{|Tx|^s} = \frac{e^{h_T(x)+g_{N,s}(x)}[T]^s}{|Tx|^s}= e^{h_T(x)}\mu_{N,s}(x) \frac{|x|^s[T]^s}{|Tx|^s}.
		$$
		Since either $s = 0$, or $x_0 =0$ and  $Tx= [T] x$, the fundamental identity \eqref{fundamentalid} is proved.

	\end{proof}

	From now on, we set
	\begin{equation*}
		\J=\frac{N}{\widetilde{\omega_{N,s}}}\int_{\mathbb{R}^N}|\nabla g_{N,s}|^{N-2}\nabla g_{N,s}\cdot \nabla h_T \, dx.
	\end{equation*}

	\begin{proposition}\label{functionalwithu=0} 
		Assume $s\in [0,N)$. For any $T\in \mG$,  we have		
		\begin{align}\label{integralht}
			\int_{\mathbb{R}^N} h_T \, d\mu_{N,s}=& \J- \frac{N-s}{N-1} \ln [T],
		\end{align}
		and
		\begin{align*}
			\frac{1}{\widetilde{\omega_{N,s}}}\int_{\mathbb{R}^N} H_{N,s}(h_T)\, dx=&	\frac{N-s}{N-1}\ln [T]-\J.
		\end{align*}
	\end{proposition}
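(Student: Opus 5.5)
The plan is to prove both identities by integrating over a ball $B_R$ and letting $R\to+\infty$. The first identity comes from the Green formula \eqref{greenformula} applied with $\varphi=h_T$. The second comes from the chain rule $\nabla(g_{N,s}+h_T)=\nabla(g_{N,s}\circ T)$ together with a change of variables.

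\emph{First identity.} Apply \eqref{greenformula} on $\Omega=B_R$ with $\varphi=h_T$.
\begin{itemize}
\item The hypotheses hold. By the proof of Lemma \ref{prophT}, $h_T$ is continuous and bounded, and it is $C^1$ away from $0$ and $x_0$. Moreover $|\nabla h_T||\nabla g_{N,s}|^{N-1}\in L^1_{loc}$ by the local bounds in Lemma \ref{prophT} and Remark \ref{proprietagradvnslnxs}. Boundedness trivially gives $h_T=o(|x|^{s-N})$ at the origin. If $x_0\neq 0$ (so $s=0$), $g_{N,0}$ is $C^1$ and the point $x_0$ causes no trouble; alternatively one can use the $W^{1,N}$ version of the formula.
\item Multiply by $N/\widetilde{\omega_{N,s}}$. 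This gives
\[
\frac{N}{\widetilde{\omega_{N,s}}}\int_{B_R}|\nabla g_{N,s}|^{N-2}\nabla g_{N,s}\cdot\nabla h_T\,dx=\int_{B_R}h_T\,d\mu_{N,s}+\frac{N}{\widetilde{\omega_{N,s}}}\int_{\partial B_R}h_T|\nabla g_{N,s}|^{N-2}\nabla g_{N,s}\cdot\nu\,d\sigma .
\]
\item The boundary term has an explicit limit. By Remark \ref{proprietagradvnslnxs}, $\nabla g_{N,s}\cdot\nu=-|\nabla g_{N,s}|$ and $|\nabla g_{N,s}|^{N-1}R^{N-1}\to \left(\frac{N(N-s)}{N-1}\right)^{N-1}$. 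Note that $\omega_{N-1}\left(\frac{N(N-s)}{N-1}\right)^{N-1}=\widetilde{\omega_{N,s}}/N$. Combined with the limit of $h_T$ at infinity from Lemma \ref{prophT}, the boundary term tends to $-\lim_{|x|\to\infty}h_T=\frac{N-s}{N-1}\ln[T]$.
\item The left side converges to $\J$. This holds because $|\nabla g_{N,s}|^{N-1}|\nabla h_T|$ is integrable at infinity: it is $O(|x|^{1-N}|x|^{-N})$ or better, by Lemma \ref{prophT}. The term $\int_{B_R}h_T\,d\mu_{N,s}$ converges since $h_T$ is bounded.
\end{itemize}
Passing to the limit yields \eqref{integralht}.

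\emph{Second identity.} Since $H_{N,s}(h_T)\ge0$ is integrable, we have $\int_{\R^N}H_{N,s}(h_T)=\lim_R\int_{B_R}H_{N,s}(h_T)$. Using $\nabla g_{N,s}+\nabla h_T=J_T^{t}\nabla g_{N,s}(T\cdot)=t\,\nabla g_{N,s}(Tx)$, write
\[
\int_{B_R}H_{N,s}(h_T)\,dx=\int_{T(B_R)}|\nabla g_{N,s}|^N\,dy-\int_{B_R}|\nabla g_{N,s}|^N\,dx-N\int_{B_R}|\nabla g_{N,s}|^{N-2}\nabla g_{N,s}\cdot\nabla h_T\,dx .
\]
Each integral is finite, since $|\nabla g_{N,s}|\in L^N_{loc}$. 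After dividing by $\widetilde{\omega_{N,s}}$, the last term tends to $-\J$. The key step is the difference of the first two terms, which I expect to be the main (though mild) obstacle: $|\nabla g_{N,s}|^N\sim C|x|^{-N}$ is not integrable at infinity, so the two terms must be handled together.
\begin{itemize}
\item If $x_0=0$, then $T(B_R)=B_{tR}$. The difference is $\pm\int_{B_{tR}\setminus B_R}|\nabla g_{N,s}|^N$ (with the appropriate orientation), and it tends to
\[
\left(\frac{N(N-s)}{N-1}\right)^N\omega_{N-1}\ln t=\widetilde{\omega_{N,s}}\,\frac{N-s}{N-1}\ln t .
\]
\item If $s=0$ and $x_0\neq0$, then $T(B_R)=B_{tR}(-tx_0)$. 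Its symmetric difference with $B_{tR}$ lies in an annulus of width $O(1)$ at radius $\sim tR$. On that annulus the integrand is $O(R^{-N})$ and the volume is $O(R^{N-1})$, so this contribution vanishes. We are then reduced to the previous case.
\end{itemize}
Since $[T]=t$, dividing by $\widetilde{\omega_{N,s}}$ gives $\frac{1}{\widetilde{\omega_{N,s}}}\int H_{N,s}(h_T)=\frac{N-s}{N-1}\ln[T]-\J$.
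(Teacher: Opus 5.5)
Your proof is correct. For the first identity you follow the paper's argument essentially verbatim: Green's identity \eqref{greenformula} on $B_R$ with $\varphi=h_T$, where the boundary term is controlled by $\lim_{|x|\to\infty}h_T$ and by $\frac{N}{\widetilde{\omega_{N,s}}}\int_{\partial B_R}|\nabla g_{N,s}|^{N-1}d\sigma\to 1$.

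For the second identity you start from the same decomposition as the paper, but you evaluate the key difference $\int_{B_R}|\nabla(g_{N,s}\circ T)|^N dx-\int_{B_R}|\nabla g_{N,s}|^N dx$ differently.
\begin{itemize}
\item The paper integrates each of the two terms by parts separately. It uses Green's identity with $\varphi=g_{N,s}$, which needs the singular-origin version of the formula, and the analogous identity for $g_{N,s}\circ T$. Each term becomes $\int_{\R^N}g_{N,s}\,d\mu_{N,s}$ plus an explicit boundary term of order $\frac{N(N-s)}{N-1}\ln r$, and the divergent parts then cancel. This requires a uniform expansion of $(g_{N,s}\circ T)\,|\nabla (g_{N,s}\circ T)|^{N-2}\nabla (g_{N,s}\circ T)\cdot\nu$ on $\partial B_r$, but it handles $x_0=0$ and $x_0\neq 0$ at once.
\item Your substitution $y=Tx$ instead turns the difference into the integral of $|\nabla g_{N,s}|^N=C|x|^{-N}(1+o(1))$ over the region between $B_R$ and $T(B_R)$. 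No integration by parts is needed. The term $\frac{N-s}{N-1}\ln[T]$ appears transparently as the scaling defect of the non-integrable $|x|^{-N}$ tail. The only extra cost is the easy symmetric-difference estimate when $x_0\neq0$. For $x_0=0$ you could even read off the limit from the exact formula in Remark \ref{RemarkIntegral}.
\end{itemize}

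One small slip: $|\nabla g_{N,s}|^{N-1}|\nabla h_T|$ is not $O(|x|^{1-2N})$ in general; it fails, for example, for $N\geq 3$ and $T$ not the identity. By Lemma \ref{prophT}, the true decay is $|x|^{\frac{s-N^2}{N-1}}$ if $x_0=0$ and $O(|x|^{-N-1})$ if $x_0\neq 0$. Both are still integrable at infinity since $s<N$, so your conclusion is unaffected.
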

	
	\begin{proof}
		By Remark \ref{Nlaplaciano} and Lemma \ref{prophT}, for any $r>0$, we can use the Green's identity to get
		\begin{align*}
			\int_{\mathbb{R}^N} h_T\, d\mu_{N,s}=&\lim_{r\rightarrow +\infty}	\int_{B_r} h_T\, d\mu_{N,s} \\=& \mathcal{J}_{N,s,T}-
			\frac{N}{\widetilde{\omega_{N,s}}}\lim_{r\rightarrow +\infty}\left(\int_{\partial B_r}h_T(x)|\nabla g_{N,s}(x)|^{N-2}\nabla g_{N,s}(x)\cdot \frac{x}{|x|}\, d\sigma\right). 
		\end{align*}
		Still from Lemma \ref{prophT}, we have
		\begin{align*}
			-\frac{N}{\widetilde{\omega_{N,s}}}&\lim_{r\rightarrow +\infty}\left(\int_{\partial B_r}h_T(x)|\nabla g_{N,s}(x)|^{N-2}\nabla g_{N,s}(x)\cdot \frac{x}{|x|}\, d\sigma\right)\\
			&=	\lim_{r\rightarrow +\infty} \left(-\frac{N-s}{N-1}\ln [T]+o(1)\right)\left(\frac{N}{\widetilde{\omega_{N,s}}}\int_{\partial B_r}|\nabla g_{N,s}(x)|^{N-1} \, d\sigma\right)\\
			&=	\lim_{r\rightarrow +\infty} \left(-\frac{N-s}{N-1}\ln [T]+o(1)\right)\left( 1 + o(1) \right)\\
			&= - \frac{N-s}{N-1} \ln [T].
		\end{align*}
		Thus, \eqref{integralht} is proved. On the other hand, by the definitions of $h_T$, $H_{N,s}$ and $\mathcal{J}_{N,s,T}$, we get
		$$
		\begin{aligned} 
			\frac{1}{\widetilde{\omega_{N,s}}}&\int_{\mathbb{R}^N} H_{N,s}(h_T)\, dx\\=&	\frac{1}{\widetilde{\omega_{N,s}}}\lim_{r\rightarrow +\infty }\left(  \int_{B_r}|\nabla ( g_{N,s}\circ T)|^N \, dx - \int_{B_r}|\nabla g_{N,s}|^Ndx \right)-\mathcal{J}_{N,s,T}\, ,
		\end{aligned}
		$$
		and, using again Remark \ref{Nlaplaciano}, we have
		$$
		\begin{aligned}
			\frac{N}{\widetilde{\omega_{N,s}}}	\int_{B_r}|\nabla g_{N,s}|^N  dx &  =  \int_{B_r} g_{N,s} \mu_{N,s} dx  +  \frac{N}{\widetilde{\omega_{N,s}}} \int_{\partial B_r} g_{N,s} |\nabla g_{N,s}|^{N-2}\nabla g_{N,s}\cdot \frac{x}{|x|} \, d \sigma \\
			& = \int_{\R^N} g_{N,s} \mu_{N,s} dx +o(1) + 
			\frac{r^{N-s}}
			{\left(1+r^{\frac{N-s}{N-1}}\right)^{N-1}}
			\ln\left[
			\frac{\omega_{N-1}}{N-s}
			\left(1+r^{\frac{N-s}{N-1}}\right)^N\right]\\
			& = \int_{\R^N} g_{N,s} \mu_{N,s} dx +\frac{N(N-s)}{N-1}\ln r + \ln \left(\frac{\omega_{N-1}}{N-s}\right) + o(1)\,
		\end{aligned}
		$$
		as $r\to +\infty$. 
		Similarly, denoting by $g_{N,s}^T := g_{N,s}\circ T$ and $\mu_{N,s}^T := \mu_{N,s}\circ T$, we find
		$$
		\begin{aligned}
			\frac{N}{\widetilde{\omega_{N,s}}}	\int_{B_r}|\nabla g_{N,s}^T|^N  dx &  =  \int_{B_r} [T]^N g_{N,s}^T \mu_{N,s}^T \,  dx  +  \frac{N}{\widetilde{\omega_{N,s}}} \int_{\partial B_r} g_{N,s}^T |\nabla g_{N,s}^T|^{N-2}\nabla g_{N,s}^T \cdot \frac{x}{|x|} \, d \sigma\\
			&  =  \int_{\R^N}  g_{N,s} \mu_{N,s} \,  dx  +  o(1) + \frac{N}{\widetilde{\omega_{N,s}}} \int_{\partial B_r} g_{N,s}^T |\nabla g_{N,s}^T|^{N-2}\nabla g_{N,s}^T \cdot \frac{x}{|x|} \, d \sigma \,.
		\end{aligned}
		$$
		
		\medskip
		\noindent
		But on $\partial B_r$ we have the  following uniform estimate as $r\to +\infty$:
		$$ \begin{aligned} & g_{N,s}^T(x) \left|\nabla g_{N,s}^T(x)\right|^{N-2} \nabla g_{N,s}^T(x)\cdot \frac{x}{|x|} \\  & = \left(\frac{N(N-s)}{N-1}\right)^{N-1} \frac{t^{N-s} |x-x_0|^{1-s}} {\left( 1+t^{\frac{N-s}{N-1}} |x-x_0|^{\frac{N-s}{N-1}} \right)^{N-1}} \frac{x\cdot(x-x_0)} {r\,|x-x_0|}  \ln\left[ \frac{\omega_{N-1}}{N-s} \left( 1+[T]^{\frac{N-s}{N-1}} |x-x_0|^{\frac{N-s}{N-1}} \right)^N \right] \\
			& =\left(\frac{N(N-s)}{N-1}\right)^{N-1} \frac{1}{r^{N-1}} \left[ \frac{N(N-s)}{N-1}\ln r + \frac{N(N-s)}{N-1}\ln [T] + \ln\left(\frac{\omega_{N-1}}{N-s}\right) +o(1) \right]\end{aligned} $$
		as $r\to +\infty$. Therefore we get 
		$$
		\begin{aligned}
			\frac{N}{\widetilde{\omega_{N,s}}}	\int_{B_r}|\nabla g_{N,s}^T|^N  &  =  \int_{\R^N} g_{N,s} \mu_{N,s} dx +\frac{N(N-s)}{N-1}\ln r  +\frac{N(N-s)}{N-1}\ln [T] + \ln \left(\frac{\omega_{N-1}}{N-s}\right) + o(1).
		\end{aligned}
		$$
		We then conclude that 
		$$ 
		\frac{1}{\widetilde{\omega_{N,s}}}\int_{\mathbb{R}^N} H_{N,s}(h_T)\, dx= \frac{N-s}{N-1}\ln [T]-\mathcal{J}_{N,s,T}.
		$$

	\end{proof}
	\begin{corollary}\label{CorImportant}
		For any $s \in [0,N)$, $\lambda>0$ and $T\in \mG$, we have 
		$$
		I_{s,\lambda}(h_T)=(\lambda-1)\left(\J - \frac{N-s}{N-1}\ln [T]\right).
		$$
		In particular, taking $\lambda=1$, we get $I_{s}(h_T) = 0$.
	\end{corollary}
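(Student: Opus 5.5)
The corollary follows by inserting the two identities of Proposition \ref{functionalwithu=0} into the definition \eqref{functionalLambda}. The only missing ingredient is the logarithmic term $\ln\left(\int_{\mathbb{R}^N} e^{h_T}\, d\mu_{N,s}\right)$, and I would compute it first.

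By the fundamental identity \eqref{fundamentalid} of Lemma \ref{prophT}, we have $\mu_{N,s}(x)e^{h_T(x)} = [T]^N\mu_{N,s}(Tx)$ for a.e. $x\in\mathbb{R}^N$; the possible exceptional point is the origin when $s>0$, which has measure zero. Since $T$ is affine with constant Jacobian determinant $[T]^N$, the change of variables $y=Tx$ gives
$$\int_{\mathbb{R}^N} e^{h_T}\, d\mu_{N,s} = \int_{\mathbb{R}^N}[T]^N\mu_{N,s}(Tx)\,dx = \int_{\mathbb{R}^N}\mu_{N,s}(y)\,dy .$$
It remains to check that $\mu_{N,s}$ is a probability density. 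In polar coordinates,
$$\int_{\mathbb{R}^N}\mu_{N,s}\,dx = (N-s)\int_0^{+\infty}\frac{\rho^{N-1-s}}{\bigl(1+\rho^{\frac{N-s}{N-1}}\bigr)^N}\,d\rho .$$
With the substitution $\tau=\rho^{\frac{N-s}{N-1}}$ this becomes $(N-1)\int_0^{+\infty}\tau^{N-2}(1+\tau)^{-N}\,d\tau = (N-1)B(N-1,1)=1$. Hence the logarithmic term vanishes.

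Next, I substitute into \eqref{functionalLambda}:
$$I_{s,\lambda}(h_T)=\left(\frac{N-s}{N-1}\ln[T]-\J\right)+\lambda\left(\J-\frac{N-s}{N-1}\ln[T]\right)-0 = (\lambda-1)\left(\J-\frac{N-s}{N-1}\ln[T]\right).$$
Taking $\lambda=1$ gives $I_s(h_T)=0$.

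There is no real obstacle here, since all the analytic work (the Green identity and the boundary asymptotics) is already contained in Proposition \ref{functionalwithu=0}. The only points needing care are the normalization $\int\mu_{N,s}=1$ and the fact that \eqref{fundamentalid} may fail at a single point. All quantities involved are finite because $h_T\in\wmns$ by Lemma \ref{prophT}.
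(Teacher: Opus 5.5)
Your proposal is correct and matches the paper's proof: the logarithmic term vanishes by \eqref{fundamentalid}, the change of variables $y=Tx$ and the normalization $\int_{\mathbb{R}^N}\mu_{N,s}\,dx=1$, and then the two identities of Proposition \ref{functionalwithu=0} give the formula. Your explicit Beta-function check of that normalization, which the paper takes for granted, is correct but not essential.
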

	\begin{proof}
		By definition 
		$$
		I_{s,\lambda}(h_T)=\frac{1}{\widetilde{\omega_{N,s}}}\int_{\mathbb{R}^N} H_{N,s}(h_T)\, dx + \lambda \int_{\mathbb{R}^N} h_T \, d\mu_{N,s}-\lambda\ln\left( \int_{\mathbb{R}^N} e^{h_T} \,  d\mu_{N,s} \right).$$
		By means of \eqref{fundamentalid} and making the change of variable $Tx=y$, we have
		\begin{align*}
			\ln\left(\int_{\mathbb{R}^N}e^{h_T}\, d\mu_{N,s}\, \right)=\ln\left(\int_{\mathbb{R}^N}\mu_{N,s}(Tx)[T]^N \, dx\right)=\ln\left(\int_{\R^N} \mu_{N,s} dy \right) = 0.
		\end{align*}
		Thus, the conclusion easily follows from the previous proposition. 
	\end{proof}

	\begin{remark}\label{htMinimum}
		Due to Theorem \ref{densitystatementANDextension},  we know that $\inf_{\wmns} I_s=0$. Then, from the previous corollary applied with $\lambda=1$, we get that $h_T$ is a minimum point of $I_s$, thereby a critical point.
	\end{remark}

	Next, we obtain the following  general transformation law for  $I_{s,\lambda}$ under the action of $\mG$ which provides useful insight into the behavior of the functional. In the following, for $u\in W_{\mu_{N,s}}(\mathbb{R}^N)$ and $T\in\mathcal{G}$, we let $u_T$ be defined as in Theorem \ref{Invariance-MinimumIs}.

	\begin{remark}\label{remarkuT}
		By direct computation, for every $T\in\mathcal G$ there exists a
		constant $C_T>0$ such that
		$$
		\frac{1}{C_T}
		\leq
		\frac{\mu_{N,s}\circ T^{-1}}{\mu_{N,s}}
		\leq C_T
		\qquad \text{a.e. in }\mathbb R^N.
		$$
		Moreover, if $s>0$, then
		$$
		\frac{1}{C_T}
		\leq
		\frac{|\nabla(g_{N,s}\circ T^{-1})|}
		{|\nabla g_{N,s}|}
		\leq C_T
		\qquad \text{a.e. in }\mathbb R^N.
		$$
		If $s=0$, the latter estimate holds outside a sufficiently large
		ball. Consequently, $u\circ T\in
		W_{\mu_{N,s}}(\mathbb R^N)$ for every $u\in W_{\mu_{N,s}}(\mathbb R^N)$.
		Together with Lemma~\ref{prophT}, this also yields
		$u_T\in W_{\mu_{N,s}}(\mathbb R^N)$.
	\end{remark}

	\begin{proposition}\label{VarianceFunctionalLambdas}
		Let $u\in \wmns$, then the following equality holds for any $\lambda>0$, $T\in \mG$:
		\begin{align*}
			I_{s,\lambda}(u_T)=&I_{s,\lambda}(u)+ (\lambda-1) \int_{\mathbb{R}^N}\left(u_T-u\right) \, d\mu_{N,s} .
		\end{align*}
	\end{proposition}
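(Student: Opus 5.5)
We compute the three pieces of $I_{s,\lambda}(u_T)$ separately, following the same scheme as in Proposition \ref{functionalwithu=0} and Corollary \ref{CorImportant}. By Remark \ref{remarkuT}, $u_T\in\wmns$, so all quantities are finite. It suffices to prove the identity for $u\in C^\infty_0(\R^N)$. Indeed, both sides are continuous in $u$ with respect to $\|\cdot\|_{\mu_{N,s}}$: this follows from the $C^1$ regularity of $I_{s,\lambda}$ (Appendix \ref{app}), the continuity of $u\mapsto u\circ T$ given by the bounds in Remark \ref{remarkuT}, and Theorem \ref{densitystatementANDextension}.

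\textbf{Exponential term.} By \eqref{fundamentalid} and the change of variables $y=Tx$,
\[
\int_{\R^N}e^{u_T}\,d\mu_{N,s}=\int_{\R^N}e^{u(Tx)}[T]^N\mu_{N,s}(Tx)\,dx=\int_{\R^N}e^{u}\,d\mu_{N,s}.
\]
So the logarithmic term is invariant.

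\textbf{Gradient term.} Since $\nabla g_{N,s}+\nabla u_T=\nabla(g_{N,s}\circ T)+\nabla(u\circ T)$, we have $H_{N,s}(u_T)=|\nabla((g_{N,s}+u)\circ T)|^N-|\nabla g_{N,s}|^N-N|\nabla g_{N,s}|^{N-2}\nabla g_{N,s}\cdot\nabla u_T$. Both $|\nabla\cdot|^N$ integrals diverge, so we work on $B_r$ and let $r\to\infty$. By conformal invariance of the $N$-Dirichlet integral under $T$,
\[
\int_{B_r}|\nabla((g_{N,s}+u)\circ T)|^N\,dx=\int_{T(B_r)}|\nabla(g_{N,s}+u)|^N\,dy.
\]
For $r$ large, $u$ vanishes near $\partial T(B_r)$. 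Expanding,
\[
|\nabla(g_{N,s}+u)|^N=|\nabla g_{N,s}|^N+N|\nabla g_{N,s}|^{N-2}\nabla g_{N,s}\cdot\nabla u+H_{N,s}(u).
\]
The $|\nabla g_{N,s}|^N$ pieces combine exactly as for $h_T$ in Proposition \ref{functionalwithu=0}, yielding $\widetilde{\omega_{N,s}}\big(\frac{N-s}{N-1}\ln[T]\big)+o(1)$ (that computation is $u$-independent). Next we use the Green identity \eqref{greenformula} for the two cross terms. The first gives $N|\nabla g_{N,s}|^{N-2}\nabla g_{N,s}\cdot\nabla u\to\widetilde{\omega_{N,s}}\int u\,d\mu_{N,s}$, with no boundary term since $u$ has compact support. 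The second gives $-N\int_{B_r}|\nabla g_{N,s}|^{N-2}\nabla g_{N,s}\cdot\nabla u_T\,dx=-\widetilde{\omega_{N,s}}\int_{B_r}u_T\,d\mu_{N,s}-N\int_{\partial B_r}u_T|\nabla g_{N,s}|^{N-2}\nabla g_{N,s}\cdot\nu\,d\sigma$. On $\partial B_r$ we have $u_T=h_T\to-\frac{N-s}{N-1}\ln[T]$, and $\frac{N}{\widetilde{\omega_{N,s}}}\int_{\partial B_r}|\nabla g_{N,s}|^{N-1}\,d\sigma\to1$, so this boundary term tends to $-\frac{N-s}{N-1}\ln[T]$ with the sign that cancels the previous $\ln[T]$. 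Altogether,
\[
\frac{1}{\widetilde{\omega_{N,s}}}\int H_{N,s}(u_T)\,dx=\frac{1}{\widetilde{\omega_{N,s}}}\int H_{N,s}(u)\,dx+\int u\,d\mu_{N,s}-\int u_T\,d\mu_{N,s}.
\]
As a check, $u=0$ recovers Proposition \ref{functionalwithu=0} via \eqref{integralht}.

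\textbf{Conclusion.} Adding $\lambda\int u_T\,d\mu_{N,s}-\lambda\ln\int e^{u_T}\,d\mu_{N,s}$ gives
\[
I_{s,\lambda}(u_T)=I_{s,\lambda}(u)+(\lambda-1)\int(u_T-u)\,d\mu_{N,s}.
\]

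The main obstacle is the bookkeeping of the divergent boundary terms: matching $\int_{B_r}$ against $\int_{T(B_r)}$ (for $s=0$, $T(B_r)$ is an off-center ball), and justifying Green's formula near the origin where $g_{N,s}$ is singular. For the latter we rely on Remark \ref{Nlaplaciano}, whose hypotheses hold since $u_T$ is bounded near $0$. For the boundary terms, we would first try to reuse verbatim the uniform asymptotics on $\partial B_r$ from the proof of Proposition \ref{functionalwithu=0}, which is legitimate because $u$ is compactly supported and so does not enter them.
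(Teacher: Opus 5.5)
Your proof is correct, but it takes a different route from the paper's. The paper works directly with arbitrary $u\in\wmns$ on all of $\mathbb{R}^N$. After splitting off $\mathcal{J}_{N,s,T}$, it changes variables $y=Tx$ globally, so that $\nabla(g_{N,s}\circ T^{-1})=\nabla g_{N,s}+\nabla h_{T^{-1}}$ appears. It then evaluates the cross term $\frac{N}{\widetilde{\omega_{N,s}}}\int_{\mathbb{R}^N}\bigl(|\nabla g_{N,s}|^{N-2}\nabla g_{N,s}-|\nabla(g_{N,s}\circ T^{-1})|^{N-2}\nabla(g_{N,s}\circ T^{-1})\bigr)\cdot\nabla u\,dx$ by testing the Euler--Lagrange equation \eqref{criticalpoints} of $h_{T^{-1}}$ with $u$, which gives $\int u\,d\mu_{N,s}-\int ue^{h_{T^{-1}}}\,d\mu_{N,s}$. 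The constants are absorbed using Proposition \ref{functionalwithu=0} and $\int u\circ T\,d\mu_{N,s}=\int ue^{h_{T^{-1}}}\,d\mu_{N,s}$. Within this proof, that route needs no truncation and no density argument. However, it rests on $h_{T^{-1}}$ being a critical point, i.e.\ on its minimality (the inequality \eqref{MT-SingularityNewSpace} together with Corollary \ref{CorImportant}) and on the differentiability of $I_s$.

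You replace that Euler--Lagrange equation by Green's identity on $B_r$ and $T(B_r)$, plus the boundary asymptotics already established in Proposition \ref{functionalwithu=0}. This gives directly the cleaner identity $\frac{1}{\widetilde{\omega_{N,s}}}\int H_{N,s}(u_T)\,dx=\frac{1}{\widetilde{\omega_{N,s}}}\int H_{N,s}(u)\,dx+\int u\,d\mu_{N,s}-\int u_T\,d\mu_{N,s}$, with no $T^{-1}$ bookkeeping.

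The price is the reduction to $u\in C^\infty_0(\mathbb{R}^N)$, which you need so that $u$ does not contribute to the boundary terms. That reduction requires continuity of $u\mapsto u\circ T$ on $\wmns$. This is a quantitative version of Remark \ref{remarkuT}; for $s=0$ you should add that $|\nabla(g_{N,0}\circ T^{-1})|$ is bounded on compact sets, since there the gradient comparison only holds outside a ball. You also do not need the full $C^1$ regularity from the Appendix. The exponential term is invariant for every $u\in\wmns$ by the change of variables alone, so density is only needed for the $H_{N,s}$-identity. There, continuity of $u\mapsto\int H_{N,s}(u)\,dx$ suffices, via Lemma \ref{Lemma 2.1} and dominated convergence, or Proposition \ref{DiffHNs}. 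In that form your argument uses \eqref{MT-SingularityNewSpace} only to guarantee that $I_{s,\lambda}$ is finite.

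Finally, the off-centre ball $T(B_r)$ you flag causes no difficulty. The quantity $\int_{T(B_r)}|\nabla g_{N,s}|^N\,dy=\int_{B_r}|\nabla(g_{N,s}\circ T)|^N\,dx$ is exactly the one computed in the proof of Proposition \ref{functionalwithu=0}.
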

	\begin{proof}
		
		Since 
		$$
		e^{u_T}\mu_{N,s} = e^{u\circ T} e^{h_T}  \mu_{N,s} =[T]^N e^{u\circ T} \mu_{N,s}\circ T,
		$$
		by making the change of variable $Tx=y$, we have
		\begin{align}\label{inteu}
			\int_{\mathbb{R}^N}e^{u_T}\mu_{N,s}\, dx=	\int_{\mathbb{R}^N}e^{u}\mu_{N,s}\, dy.
		\end{align}
		Similarly, since
		\begin{align*}
			\int_{\mathbb{R}^N} u(Tx)  \, d\mu_{N,s} = \frac{1}{[T]^N}\int_{\mathbb{R}^N} u(y)  \mu_{N,s}(T^{-1}y) \, dy = \int_{\R^N} u e^{h_{T^{-1}}} d\mu_{N,s},
		\end{align*}
		we have		
		\begin{align}\label{intut}
			\int_{\mathbb{R}^N}u_T \, d\mu_{N,s}= \int_{\mathbb{R}^N} h_T \, d\mu_{N,s} +\int_{\mathbb{R}^N} u e^{h_{T^{-1}}} \, d\mu_{N,s}.
		\end{align}
		Moreover, let us show the following claim 
		\begin{equation}\label{estimateI1s}\begin{aligned}
				\frac{1}{\widetilde{\omega_{N,s}}}\int_{\mathbb{R}^N}H_{N,s}(u_T) \, dx=&		
				\frac{1}{\widetilde{\omega_{N,s}}}  \int_{\mathbb{R}^N}\bigg( H_{N,s}(u)-H_{N,s}(h_{T^{-1}})\bigg)\, dx\\&
				-\left(\mathcal{J}_{N,s,T}+\mathcal{J}_{N,s,T^{-1}}\right)
				+ \int_{\mathbb{R}^N}u\, d\mu_{N,s} -\int_{\mathbb{R}^N}u e^{h_{T^{-1}}}\, d\mu_{N,s}.
			\end{aligned}
		\end{equation}
		In the following computations we denote by $g_{N,s}^T :=g_{N,s}\circ T$ and $u^T := u\circ T$, so that $u_T = u^T+h_T$. By definition of $H_{N,s}$ and using that $|\nabla g_{N,s}|^{N-2}\nabla g_{N,s} \cdot \nabla h_T\in L^1(\R^N,dx)$,  we have 
		\begin{equation}\label{part1}
			\begin{aligned}
				\frac{1}{\widetilde{\omega_{N,s}}}\int_{\mathbb{R}^N}H_{N,s}(u_T) \, dx=&\frac{1}{\widetilde{\omega_{N,s}}}  \int_{\mathbb{R}^N}\biggl( |\nabla (u^T+g_{N,s}^T)|^N \\&  -|\nabla g_{N,s}|^N - N|\nabla g_{N,s}|^{N-2}\nabla g_{N,s}\cdot\nabla (u^T+h_T)\biggr) \, dx \\
				=&\frac{1}{\widetilde{\omega_{N,s}}}  \int_{\mathbb{R}^N}\biggl( |\nabla (u^T+g_{N,s}^T)|^N \\&  -|\nabla g_{N,s}|^N - N|\nabla g_{N,s}|^{N-2}\nabla g_{N,s}\cdot\nabla u^T\biggr) \, dx-\J.
			\end{aligned}
		\end{equation}
		By using again the change of variable $Tx=y$ and the definition of $H_{N,s}$, we  get
		\begin{equation}\label{part2}
			\begin{aligned}
				\frac{1}{\widetilde{\omega_{N,s}}}  \int_{\mathbb{R}^N}&\biggl( |\nabla (u^T+g_{N,s}^T)|^N   -|\nabla g_{N,s}|^N - N|\nabla g_{N,s}|^{N-2}\nabla g_{N,s}\cdot\nabla u^T\biggr) \, dx	\\
				=& \frac{1}{\widetilde{\omega_{N,s}}}  \int_{\mathbb{R}^N}\bigg(|\nabla(u+g_{N,s})|^N-|\nabla g_{N,s}^{T^{-1}}|^N - N\left|\nabla  g_{N,s}^{T^{-1}}\right|^{N-2}\nabla g_{N,s}^{T^{-1}}\cdot \nabla u \bigg)\, dy
				\\ = &  	\frac{1}{\widetilde{\omega_{N,s}}}\int_{\mathbb{R}^N}H_{N,s}(u) \, dx + 	\frac{1}{\widetilde{\omega_{N,s}}}\int_{\R^N}\left( |\nabla g_{N,s}|^N - |\nabla g_{N,s}^{T^{-1}}|^N\right) dx\\
				&+\frac{N}{\widetilde{\omega_{N,s}}} \int_{\R^N} \left[ \, \left|\nabla  g_{N,s}\right|^{N-2}\nabla g_{N,s}- \left|\nabla  g_{N,s}^{T^{-1}}\right|^{N-2}\nabla g_{N,s}^{T^{-1}} \, \right]\cdot \nabla u \, dx.
			\end{aligned}
		\end{equation}
		Now, we observe that 
		\begin{equation*}
			\nabla g_{N,s}^{T^{-1}}  = \nabla h_{T^{-1}} + \nabla g_{N,s}, 
		\end{equation*}
		so that, using the definition of $H_{N,s}$
		$$
		\begin{aligned}
			|\nabla g_{N,s}|^N- |\nabla g_{N,s}^{T^{-1}}|^{N} &=  	|\nabla g_{N,s}|^N- | \nabla h_{T^{-1}} + \nabla g_{N,s}|^{N} \\
			& = - H_{N,s}(h_{T^{-1}}) - N |\nabla g_{N,s}|^{N-2}\nabla g_{N,s} \cdot \nabla h_{T^{-1}}, 
		\end{aligned}
		$$
		and 
		\begin{equation}\label{part3}
			\frac{1}{\widetilde{\omega_{N,s}}}\int_{\R^N}\left( |\nabla g_{N,s}|^N - |\nabla g_{N,s}^{T^{-1}}|^N\right) dx = -  \frac{1}{\widetilde{\omega_{N,s}}}\int_{\R^N} H_{N,s}(h_{T^{-1}}) \,dx - \mathcal{J}_{N,s,T^{-1}}.
		\end{equation}
		Similarly, we have
		\begin{equation}\label{part4}\begin{aligned}
				&\frac{N}{\widetilde{\omega_{N,s}}} \int_{\R^N} \left(\left|\nabla  g_{N,s}\right|^{N-2}\nabla g_{N,s}- \left|\nabla  g_{N,s}^{T^{-1}}\right|^{N-2}\nabla g_{N,s}^{T^{-1}}\right)\cdot \nabla u\, dx \\
				& = 	\frac{N}{\widetilde{\omega_{N,s}}} \int_{\R^N} \left[ \, \left|\nabla  g_{N,s}\right|^{N-2}\nabla g_{N,s}- \left| \nabla g_{N,s} +  \nabla h_{T^{-1}} \right|^{N-2} \left( \nabla g_{N,s} +  \nabla h_{T^{-1}} \right) \, \right]\cdot \nabla u\, dx .
			\end{aligned}
		\end{equation}
		In addition, by  Lemma \ref{prophT} and Remark \ref{htMinimum} we know that  $h_{T^{-1}}$ belongs to $\wmns$ and is a critical point of $I_{s}$. Then, applying \eqref{criticalpoints} and \eqref{fundamentalid}, we find:
		\begin{equation}\label{FinalStepconvergence}
			\begin{aligned}
				&	\frac{N}{\widetilde{\omega_{N,s}}} \int_{\R^N} \left(\left|\nabla  g_{N,s}\right|^{N-2}\nabla g_{N,s}- \left|  \nabla( h_{T^{-1}} +  g_{N,s})\right|^{N-2} \nabla (h_{T^{-1}} +  g_{N,s})\right)\cdot \nabla u\, dy
				\\&=\int_{\mathbb{R}^N}u\, d\mu_{N,s}-\int_{\mathbb{R}^N}ue^{h_{T^{-1}}}\, d\mu_{N,s}.
			\end{aligned}
		\end{equation}
		Using \eqref{part1}, \eqref{part2}, \eqref{part3}, \eqref{part4} and \eqref{FinalStepconvergence}, the claim  \eqref{estimateI1s} follows. Considering also \eqref{inteu} and \eqref{intut}, we find that
		\begin{equation}\begin{aligned}\label{almostthere}
				I_{s,\lambda}(u_T) =&	I_{s,\lambda}(u) + (\lambda - 1)\int_{\R^N} (u_T - u) \, d\mu_{N,s}
				\\ &  - \frac{1}{\widetilde{\omega_{N,s}}}  \int_{\mathbb{R}^N}H_{N,s}(h_{T^{-1}})\, dx + \int_{\mathbb{R}^N} h_{T} \, d\mu_{N,s}  -\mathcal{J}_{N,s,T}-\mathcal{J}_{N,s,T^{-1}}.
			\end{aligned}
		\end{equation}
		Additionally, note that by \eqref{integralht}
		\begin{align*}
			\int_{\mathbb{R}^N}h_T\, d\mu_{N,s}+\int_{\mathbb{R}^N} h_{T^{-1}} \, d\mu_{N,s}
			=& \J+\mathcal{J}_{N,s,T^{-1}},
		\end{align*}
		and we know by Proposition \ref{functionalwithu=0} that 
		$$
		\frac{1}{\widetilde{\omega_{N,s}}}  \int_{\mathbb{R}^N}H_{N,s}(h_{T^{-1}})\, dx+ \int_{\R^N} h_{T^{-1}} d\mu_{N,s}= 0.
		$$
		Coming back to \eqref{almostthere}, we deduce 
		$$ \dis 	I_{s,\lambda}(u_T) =	I_{s,\lambda}(u) + (\lambda - 1)\int_{\R^N} (u_T - u) \, d\mu_{N,s}.$$
	\end{proof}
	
	\begin{remark}
		If $T:\R^N \longrightarrow \R^N$ is a general conformal diffeomorphism such that $T(0)=0$ when $s>0$, then there exists an isometry $R:\R^N\longrightarrow \R^N $ fixing the origin, and $\bar T \in \mG$ such that $T= R\circ \bar T$. Since $g_{N,s}$ is radially symmetric, it is clear that  $g_{N,s} \circ R = g_{N,s}$, by which we get 
		$$
		I_s(u) = I_s(u\circ R)
		$$
		for any $u\in \wmns$. In addition, we have
		$$ u_T = u \circ R \circ { \bar T} + h_{\bar T}  = (u\circ R)_{\bar T}.
		$$ 
		and so  
		$$I_s(u_{T}) = I_s((u\circ R)_{\bar T} )= I_s(u\circ R )=I_s (u) .$$
	\end{remark}

	\begin{remark}\label{remarknew}
		There are several equivalent ways to state the identity in the previous proposition. For example, by using the definition of $u_T$ together with Proposition \ref{functionalwithu=0}, we find 
		$$
		\begin{aligned}
			I_{s,\lambda}(u_T) & = I_{s,\lambda}(u)+ (\lambda - 1)\left( \J- \frac{N-s}{N-1} \ln [T] + \int_{\R^N} \left(u(T x) -u(x)\right)\, d\mu_{N,s}(x)  \right) \\
			& = I_{s,\lambda}(u) + I_{s,\lambda}(h_T) + (\lambda - 1) \int_{\R^N} \left(u(T x) -u(x)\right)\, d\mu_{N,s}(x) .
		\end{aligned}
		$$
	\end{remark}

	We conclude this section by explicitly computing  the quantity $\mathcal{J}_{N,s,T}$ when $T = T_{t,0}$ is a simple dilation of parameter $t>0$.

	\begin{lemma}\label{L1product} For $s \in [0,N)$ and  $t>0$, $t\ne 1$ consider the map $T(x)= t x$.  Then we have
		\begin{align*}
			\mathcal{J}_{N,s,T}=
			\frac{N(-1)^{N-1}}{\left(b-1\right)^{N-1}}\ln b -  \frac{N}{\left(b-1\right)^{N-1}}\sum_{j=1}^{N-1}(-1)^{N+j} \frac{\left(b-1\right)^{j}}{j},
		\end{align*}
		where $b$ denotes $t^{\frac{N-s}{N-1}}$.
	\end{lemma}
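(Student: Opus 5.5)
The plan is to compute $\mathcal{J}_{N,s,T}$ directly in polar coordinates, using the explicit gradients from the proof of Lemma \ref{prophT} and Remark \ref{proprietagradvnslnxs}, and then to reduce everything to an elementary one-dimensional integral. Set $a:=\frac{N-s}{N-1}$, so that $b=t^a$. For $x_0=0$ the proof of Lemma \ref{prophT} gives
$$
\nabla h_T(x)=\frac{b-1}{1+b|x|^{a}}\,\nabla g_{N,s}(x).
$$
Hence the integrand of $\mathcal{J}_{N,s,T}$ is radial and nonnegative (up to the sign of $b-1$):
$$
|\nabla g_{N,s}|^{N-2}\nabla g_{N,s}\cdot\nabla h_T=\frac{(b-1)\,|\nabla g_{N,s}|^N}{1+b|x|^a}.
$$
By Remark \ref{proprietagradvnslnxs},
$$
|\nabla g_{N,s}|^N=(Na)^N\frac{r^{N(1-s)/(N-1)}}{(1+r^a)^N},\qquad r=|x|.
$$

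Passing to polar coordinates produces the factor $\omega_{N-1}r^{N-1}\,dr$. I will then substitute $\rho=r^a$, so $d\rho=a\,r^{(1-s)/(N-1)}dr$. The powers of $r$ combine as
$$
\frac{N(1-s)}{N-1}+N-1-\frac{1-s}{N-1}=N-s=(N-1)a,
$$
which means $r^{N-s}=\rho^{N-1}$. The constants also collapse. Since $\widetilde{\omega_{N,s}}=N^N a^{N-1}\omega_{N-1}$, we have
$$
\frac{N}{\widetilde{\omega_{N,s}}}\,\omega_{N-1}(Na)^N\frac1a=N.
$$
This gives
$$
\mathcal{J}_{N,s,T}=N(b-1)\int_0^{\infty}\frac{\rho^{N-1}}{(1+\rho)^N(1+b\rho)}\,d\rho .
$$

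Next I will substitute $\sigma=\rho/(1+\rho)\in(0,1)$, so that $d\rho=d\sigma/(1-\sigma)^2$ and $1+b\rho=(1+(b-1)\sigma)/(1-\sigma)$. All the powers of $(1-\sigma)$ cancel, and with $c:=b-1$ the integral becomes
$$
\mathcal{J}_{N,s,T}=Nc\int_0^1\frac{\sigma^{N-1}}{1+c\sigma}\,d\sigma .
$$
This integral is elementary. Putting $\tau=1+c\sigma$ gives
$$
c^{-N}\int_1^{b}\frac{(\tau-1)^{N-1}}{\tau}\,d\tau .
$$
Expanding $(\tau-1)^{N-1}$ binomially, or equivalently using polynomial division of $\sigma^{N-1}$ by $1+c\sigma$, yields a logarithmic term $(-1)^{N-1}c^{-N}\ln b$ plus a finite sum. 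That sum can be rewritten as $\sum_{j=1}^{N-1}(-1)^{N-1-j}\frac{(b-1)^j}{j}$ times $c^{-N}$, for instance via the identity
$$
\frac{(\tau-1)^{N-1}-(-1)^{N-1}}{\tau}=\sum_{j=1}^{N-1}(-1)^{N-1-j}(\tau-1)^{j-1}.
$$
Integrating this from $1$ to $b$ gives exactly the terms $(b-1)^j/j$. Multiplying by $Nc$ then produces the stated formula, with $(-1)^{N-1-j}=-(-1)^{N+j}$ supplying the minus sign.

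The main obstacle is bookkeeping rather than conceptual. Two points need care: checking that the exponents and constants collapse exactly to $N$, and getting the signs in the final alternating sum right. The integral converges because the integrand decays like $\rho^{-2}$ at infinity and is bounded near $0$. The case $b<1$, where $c\in(-1,0)$, needs no separate treatment, since $1+c\sigma>0$ on $[0,1]$.
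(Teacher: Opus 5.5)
Your proposal is correct and follows the paper's proof essentially step by step: the same polar-coordinate reduction to $N(b-1)\int_0^\infty \frac{z^{N-1}}{(1+z)^N(1+bz)}\,dz$, and your substitutions $\sigma=\rho/(1+\rho)$, $\tau=1+(b-1)\sigma$ compose to the paper's single change of variables $(b-1)z/(1+z)$ (shifted by $1$), after which the same geometric-sum identity produces the logarithm and the alternating sum. Your bookkeeping of the constants (collapsing to $N$) and of the sign $(-1)^{N-1-j}=-(-1)^{N+j}$ checks out.
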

	\begin{proof} Observe that 
		$$
		\begin{aligned}
			|\nabla g_{N,s}|^{N-2} \nabla g_{N,s} \cdot \nabla h_T
			=
			\left(\frac{N(N-s)}{N-1}\right)^N
			\frac{
				\left(t^{\frac{N-s}{N-1}}-1\right)
				|x|^{\frac{N(1-s)}{N-1}}
			}{
				\left(1+|x|^{\frac{N-s}{N-1}}\right)^N
				\left(1+t^{\frac{N-s}{N-1}}
				|x|^{\frac{N-s}{N-1}}\right)
			}.
		\end{aligned}
		$$
		Integrating in spherical coordinates and writing $z = |x|^\frac{N-s}{N-1}$, we find:
		\begin{align*}
			\J =
			N(b-1)\int_{0}^{+\infty} \frac{z^{N-1}}{(1+z)^N(1+b z)}\,dz .
		\end{align*}
		We make the change of variable $\rho = \frac{(b-1)z}{1+z}$ to find
		$$
		\begin{aligned}
			\J = \frac{N}{(b-1)^{N-1}}\int_0^{b-1} \frac{\rho^{N-1}}{\rho+1} d\rho\,.
		\end{aligned}
		$$
		Since 
		$$
		\frac{\rho^{N-1}}{\rho+1} = (-1)^{N-1}\left(\frac{(-\rho)^{N-1}-1}{\rho+1}+\frac{1}{1+\rho}\right) =(-1)^{N-1}\left( \frac{1}{1+\rho} - \sum_{k=0}^{N-2} (-1)^k \rho^k\,
		\right)\,,$$
		we get the result
		$$
		\begin{aligned}
			\mathcal{J}_{N,s,T} & =\frac{N(-1)^{N-1}}{(b-1)^{N-1}}\left(\ln b - \sum_{k=0}^{N-2}(-1)^k \frac{(b-1)^{k+1}}{k+1}\right)\, \\
			& = \frac{N(-1)^{N-1}}{(b-1)^{N-1}}\ln b - \frac{N}{(b-1)^{N-1}}\sum_{j=1}^{N-1}(-1)^{N+j} \frac{(b-1)^{j}}{j}\,.
		\end{aligned}
		$$
	\end{proof}

	\begin{remark}\label{RemarkIntegral} In Lemma \ref{L1product}, the parameters $t$ and $b$ are positive, and $\lim_{t \to 0} \mathcal{J}_{N,s,T} = -\infty$. Note that as $t\to 0$, we have $|\nabla g_{N,s}|^{N-2}\nabla g_{N,s} \cdot \nabla h_t \to - |\nabla g_{N,s}|^{N}$ in $L^1_{loc}(\R^N,dx)$. For any $r>0$, using the same change of variable, one can prove that
		$$
		\frac{1}{\widetilde{\omega_{N,s}}} \int_{B_r}|\nabla g_{N,s}|^N \, dx=\int_{0}^{r^{\frac{N-s}{N-1}}}\frac{z^{N-1}\, dz}{\left(1+z\right)^{N}} =  \ln(1+r^\frac{N-s}{N-1})-  \sum_{j=1}^{N-1} \frac{1}{j} \left( \frac{r^\frac{N-s}{N-1}}{1+r^\frac{N-s}{N-1}}\right)^j.
		$$
		We omit the explicit computation, since the integral was also computed in \cite{BorCinMan-2025} with different techniques. 
		
	\end{remark}
	
	\begin{remark}\label{asymptJ}
		As a consequence of Lemma \ref{L1product}, 	for $T=T_{t,0}$, we have
		$$
		\mathcal{J}_{N,s,T} \to \frac{N}{N-1} \text{ as } t\to +\infty,
		$$
		and 
		$$
		\mathcal{J}_{N,s,T} = \frac{N(N-s)}{N-1} \ln t + N \sum_{j=1}^{N-1}\frac{1}{j} + o(1) \text{ as } t\to 0^+.
		$$
	\end{remark}
	
	\begin{proof}[\textbf{Proof of Theorem \ref{Invariance-MinimumIs}}]
		Point \ref{minimumht} follows directly from Remark \ref{htMinimum}. The invariance of $I_s$ under the family $u_T$ is also deduced by Proposition \ref{VarianceFunctionalLambdas} with $\lambda=1$. Concerning \ref{allcritical}, from \cite{CirEspLi-2026} we know that all the weak solutions of \eqref{LiovilleNoIntegral}, have the form
		$$w(x)=g_{N,s}(Tx)+(N-s) \ln [T],$$
		for some $T \in \mathcal{G}$. Then, if $u\in \wmns$ is a critical point of $I_s$, by Lemma \ref{ConectionLioville} we know that $w:=u+g_{N,s}$ is a weak solution of \eqref{LiovilleWithIntegral}. Setting $c:=\ln \int_{\R^N} e^w/|x|^s\,dx $, the function $w-c$ is a weak solution to \eqref{LiovilleNoIntegral}. Therefore, $u$ can be expressed as $h_T+c$ for some $T\in \mG$, $c \in \R$.
	\end{proof}
	
	\begin{lemma}\label{ComparisionFunctionals}
		Assume $0< \lambda < 1$. For any $u\in W_{\mu_{N,s}}(\mathbb{R}^N)$, we have  $I_{s}(u)\leq I_{s,\lambda}(u),$ and equality holds if and only if $u$ is a constant function. Moreover, $\inf_{ \wmns}I_{s,\lambda}=0$.
	\end{lemma}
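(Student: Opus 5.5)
The plan is to compare $I_{s,\lambda}$ with $I_s$ through the difference of their linear and logarithmic parts. For $u\in\wmns$, write
$$
I_{s,\lambda}(u)-I_s(u)=(1-\lambda)\left[\ln\left(\int_{\mathbb{R}^N}e^u\,d\mu_{N,s}\right)-\int_{\mathbb{R}^N}u\,d\mu_{N,s}\right],
$$
since the gradient terms $\frac{1}{\widetilde{\omega_{N,s}}}\int H_{N,s}(u)\,dx$ coincide. The quantity $\int u\,d\mu_{N,s}$ is finite because $u\in L^1(\mathbb{R}^N,d\mu_{N,s})$. Moreover, $\int e^u\,d\mu_{N,s}<+\infty$ by Theorem \ref{densitystatementANDextension}, since its right-hand side is finite by the Corollary after Lemma \ref{Lemma 2.1}. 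So everything is well defined.

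Next I apply Jensen's inequality. Because $\mu_{N,s}$ is a probability measure, which is the normalization already used in Corollary \ref{CorImportant}, we have
$$
\int_{\mathbb{R}^N}e^u\,d\mu_{N,s}\ge e^{\int u\,d\mu_{N,s}}.
$$
Taking logarithms, the bracket above is nonnegative. Since $1-\lambda>0$, this gives $I_s(u)\le I_{s,\lambda}(u)$.

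For the equality case, equality forces equality in Jensen. Because $t\mapsto e^t$ is strictly convex and $\mu_{N,s}$ has a strictly positive density, this means $u$ is $\mu_{N,s}$-a.e. constant, hence a.e. constant in $\mathbb{R}^N$. Conversely, if $u\equiv c$, both brackets vanish and $H_{N,s}(c)=R_N(\nabla g_{N,s},0)=0$, so $I_s(c)=I_{s,\lambda}(c)=0$. I would state the strict-convexity argument explicitly. If $u$ is not a.e. constant, set $m=\int u\,d\mu_{N,s}$ and use the pointwise inequality $e^{u}\ge e^m(1+u-m)$. This inequality is strict wherever $u\ne m$, which is a set of positive measure, so integrating gives a strict inequality.

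Finally, for the infimum: $I_{s,\lambda}\ge I_s\ge 0$ by Theorem \ref{densitystatementANDextension}, and $I_{s,\lambda}(0)=0$, so $\inf_{\wmns}I_{s,\lambda}=0$. The only delicate point is the strictness in Jensen, which is handled by the tangent-line inequality above. I expect no real obstacle.
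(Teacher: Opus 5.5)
Your proposal is correct and follows the paper's proof. It uses Jensen's inequality for the probability measure $\mu_{N,s}$ together with the positive factor $1-\lambda$, strict convexity of the exponential for the equality case, and $I_{s,\lambda}\ge I_s\ge 0$ with $I_{s,\lambda}$ vanishing on constants for the infimum; the only difference is that you prove the strict Jensen step directly via the tangent-line inequality $e^u\ge e^m(1+u-m)$ instead of citing Lieb--Loss.
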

	\begin{proof}
		For any $u\in W_{\mu_{N,s}}(\mathbb{R}^N)$,  Jensen's inequality implies that 
		\[0\geq \int_{\mathbb{R}^N} u \,d\mu_{N,s}-\ln \left(\int_{\mathbb{R}^N} e^u \,d\mu_{N,s}\right),\]
		which yields $I_{s}(u)\leq I_{s,\lambda}(u)$ whenever $0< \lambda < 1$.  Moreover, since the exponential is a strictly convex function, the equality holds if and only if $u$ is a constant function (see for instance \cite[Theorem 2.2]{LiebLoss}). Finally, considering that $\inf_{u\in \wmns}I_s=0$ and $I_{s,\lambda}(u)=0$ whenever $u$ is a constant function, we deduce $\inf_{u\in \wmns}I_{s,\lambda}=0$.
		
	\end{proof}
	

	\begin{proof}[\textbf{Proof of Theorem \ref{InfLambdaLess1Bigger1}}]
		If $0 < \lambda <1$,	the point \ref{lambda<1} is obtained by using the previous lemma.\\
		If $\lambda >1$, part \ref{lambda>1} is a consequence of applying Corollary \ref{CorImportant} with $T= T_{t,0}$ and Remark \ref{asymptJ} to get 	\[I_{s,\lambda}(h_T)\rightarrow -\infty, \]
		as $t\to +\infty$ (or $t\to 0^+$). 
	\end{proof}
	
	\noindent
	As a consequence of Theorem \ref{InfLambdaLess1Bigger1}, we deduce the optimality of the constant $1/\widetilde{\omega_{N,s}}$ in the inequality \eqref{MT-SingularityNewSpace}. Indeed, if there were a real number $\lambda > 1$ such that
	\begin{equation*}
		\displaystyle \ln\left( \int_{\mathbb{R}^N} e^u \, d\mu_{N,s} \right)\leq \frac{1}{ \lambda \widetilde{\omega_{N,s}}}\int_{\mathbb{R}^N} H_{N,s}(u)\, dx + \int_{\mathbb{R}^N} u \, d\mu_{N,s}
	\end{equation*}
	for any function $u\in W_{\mu_{N,s}}(\mathbb{R}^N)$, it would follow that
	$$ \dis  \inf_{ u \in \wmns} I_{s,\lambda}(u) \geq 0.$$
	This is a contradiction to the fact that $\inf_{\wmns} I_{s,\lambda} = - \infty$ whenever $\lambda >1$.
	
	\section{Optimal constant and Poincaré Inequality}\label{SectionOptimalConstant}
	
	In this Section,  we obtain an alternative proof of the optimality of the constant $1/\widetilde{\omega_{N,s}}$ in the inequality \eqref{MT-SingularityNewSpace} via Proposition \ref{propN}. Moreover, we extend the Poincaré inequality also to $\wmns$. We start by dealing with the following problem in $\mathbb{R}^N$:
	\begin{equation}\label{Ec.LemN}
		-	\text{div} \left(|\nabla g_{N,s}|^{N-2}\nabla \phi+(N-2)|\nabla g_{N,s}|^{N-4}(\nabla g_{N,s}\cdot \nabla \phi)\nabla g_{N,s}\right)	=\frac{\widetilde{\omega_{N,s}}}{N}\phi\mu_{N,s},
	\end{equation}
	where $N\geq 2$ and $s \in [0,N)$. Concerning the concept of solution, we deal with the following notion.
	\begin{definition}
		A function $\phi\in W_{\mu_{N,s}}(\mathbb{R}^N)$ is a weak solution of \eqref{Ec.LemN} if it satisfies
		\begin{equation}\label{Ec2.LemN}
			\begin{aligned}
				\int_{\mathbb{R}^N}& \left(|\nabla g_{N,s}|^{N-2}\nabla \phi+(N-2)|\nabla g_{N,s}|^{N-4}(\nabla g_{N,s} \cdot \nabla \phi)\nabla g_{N,s}\right)\cdot \nabla \rho\, dx\\&=\frac{\widetilde{\omega_{N,s}}}{N}\int_{\mathbb{R}^N} \phi\rho\, d\mu_{N,s}
			\end{aligned}
		\end{equation}
		for all $\rho\in C^{\infty}_0(\mathbb{R}^N)$.
	\end{definition}
	
	\begin{lemma}\label{existencesolutionN}
		Assume $N\geq 2$ and $s \in [0,N)$, and let $\phi_{N,s}$ be defined as in \eqref{DefPhiNs}. Then:
		\begin{enumerate}[label=$\alph*$), ref=$\alph*$)]
			\item  $\phi_{N,s}\in \wmns$\label{a)}.
			\item  $\phi_{N,s}$ solves the problem \eqref{Ec.LemN} a.e. in $\mathbb{R}^N$.\label{b)}
			\item $\phi_{N,s}$ is a weak solution of \eqref{Ec.LemN}.\label{c)}
			\item $\int_{\mathbb{R}^N}\phi_{N,s} \, d\mu_{N,s} =0.$\label{d)}
		\end{enumerate}
	\end{lemma}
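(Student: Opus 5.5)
The natural route is to observe first that $\phi_{N,s}$ is, up to a constant, the derivative at $t=1$ of the family of minimizers $h_{T_{t,0}}$. Writing $r=|x|$ and $z=r^{\frac{N-s}{N-1}}$, we have $h_{T_{t,0}}(x)=N\ln\frac{1+z}{1+t^{\frac{N-s}{N-1}}z}+(N-s)\ln t$. Differentiating in $t$ at $t=1$ gives
\[
\frac{d}{dt}\Big|_{t=1}h_{T_{t,0}}=-\frac{N(N-s)}{N-1}\frac{z}{1+z}+(N-s)=\frac{N-s}{N-1}\cdot\frac{N-1-z}{1+z}=\phi_{N,s}.
\]
This identification suggests \ref{b)}: \eqref{Ec.LemN} is exactly the linearization of $-\Delta_N(g_{N,s}+u)=\frac{\widetilde{\omega_{N,s}}}{N}e^{u}\mu_{N,s}$ at $u=0$, since the derivative of $|X|^{N-2}X$ in direction $Y$ is $|X|^{N-2}Y+(N-2)|X|^{N-4}(X\cdot Y)X$. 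By \eqref{fundamentalid} and a direct computation away from the origin, $w_t=g_{N,s}+h_{T_{t,0}}$ satisfies $-\Delta_N w_t=\frac{\widetilde{\omega_{N,s}}}{N}e^{h_{T_{t,0}}}\mu_{N,s}$ pointwise on $\R^N\setminus\{0\}$, using Remark \ref{Nlaplaciano} applied to the rescaled function. Differentiating in $t$ at $t=1$ then yields \eqref{Ec.LemN} pointwise on $\R^N\setminus\{0\}$, hence a.e.

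Interchanging the $t$-derivative with the spatial derivatives is legitimate because everything is smooth in $(t,x)$ for $x\neq 0$. Alternatively, since $\phi_{N,s}$ is radial, one can verify \eqref{Ec.LemN} by a direct ODE computation. Because $\nabla\phi_{N,s}$ is parallel to $\nabla g_{N,s}$, the flux is $(N-1)|g'|^{N-2}\phi'$ in the radial direction, and the equation becomes $-(N-1)r^{1-N}(r^{N-1}|g'|^{N-2}\phi')'=\frac{\widetilde{\omega_{N,s}}}{N}\phi\mu_{N,s}$, which can be checked directly in the variable $z$.

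For \ref{a)}: $\phi_{N,s}$ is bounded, so it lies in $L^1(d\mu_{N,s})$. Its gradient is
\[
\nabla\phi_{N,s}=-\frac{N(N-s)}{(N-1)^2}\,\frac{\nabla z}{(1+z)^2}=\frac{1}{N-1}\,\frac{1}{1+z}\,\nabla g_{N,s}.
\]
Hence, by Remark \ref{proprietagradvnslnxs}, $|\nabla\phi_{N,s}|\sim|x|^{\frac{1-s}{N-1}}$ near $0$ and $|\nabla\phi_{N,s}|\sim |x|^{-1-\frac{N-s}{N-1}}$ at infinity. These are the same asymptotics as in \eqref{gradientHTinfinity} and \eqref{gradientHTzero}, and the integrability then follows exactly as in Lemma \ref{prophT}. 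In fact $|\nabla\phi_{N,s}|=c\,|\nabla h_{T}|\,(1+|tx|^{\frac{N-s}{N-1}})/(1+z)$, which is comparable to $|\nabla h_T|$.

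For \ref{c)}, multiply the pointwise equation by $\rho\in C_0^\infty$ and integrate on $\R^N\setminus B_\varepsilon$. The boundary term on $\partial B_\varepsilon$ is bounded by $C\varepsilon^{N-1}|\nabla g_{N,s}|^{N-2}|\nabla\phi_{N,s}|\sim C\varepsilon^{N-1+\frac{1-s}{N-1}(N-1)}=C\varepsilon^{N-s}\to 0$, while the integrands are $L^1$ near $0$ because $\mu_{N,s}\in L^1_{loc}$ and $|\nabla g|^{N-1}\in L^1_{loc}$. This step, handling the singularity at $0$ when $s\ge1$ where $g_{N,s}$ fails to be $C^1$, is the only delicate one, and the decay $\varepsilon^{N-s}$ settles it, in the spirit of Remark \ref{Nlaplaciano}.

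For \ref{d)}, the quickest route is to integrate \eqref{Ec.LemN} over $B_r$. The left side becomes the boundary flux $-(N-1)\omega_{N-1}r^{N-1}|g'|^{N-2}\phi'(r)$, which decays like $r^{N-1}r^{-(N-2)}r^{-1-\frac{N-s}{N-1}}\to 0$. Letting $r\to\infty$ then gives $\frac{\widetilde{\omega_{N,s}}}{N}\int\phi_{N,s}\,d\mu_{N,s}=0$. Alternatively, one can compute directly in the variable $z$, where $d\mu_{N,s}$ becomes $\frac{N-s}{N-1}\,\frac{z^{\frac{N-s}{N-1}\cdot 0}\cdots}{}$; more simply, $\int\phi\,d\mu=\frac{N-s}{N-1}\int_0^\infty \frac{N-1-z}{(1+z)^{N+1}}\,dz\cdot c$, and this integral vanishes because $\int_0^\infty\frac{dz}{(1+z)^{N+1}}=\frac1N$ and $\int_0^\infty\frac{z\,dz}{(1+z)^{N+1}}=\frac{1}{N(N-1)}$.
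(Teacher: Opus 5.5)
Your arguments for a), b) and c) follow the paper's route:
\begin{itemize}
\item the $L^1(d\mu_{N,s})$ bound (your use of the boundedness of $\phi_{N,s}$ is cleaner than the paper's asymptotics of $\phi_{N,s}\mu_{N,s}$);
\item the proportionality of $\nabla\phi_{N,s}$ to $\nabla g_{N,s}$;
\item the identification $\phi_{N,s}=\partial_t w_t|_{t=1}$ with $w_t=g_{N,s}(t\,\cdot)+(N-s)\ln t$, differentiated on $\R^N\setminus\{0\}$;
\item Gauss--Green away from the origin, with an inner flux of order $\varepsilon^{N-s}$.
\end{itemize}
Your check that $w_t$ solves $-\Delta_N w_t=\frac{\widetilde{\omega_{N,s}}}{N}e^{h_{T_{t,0}}}\mu_{N,s}$ pointwise, by scaling plus \eqref{fundamentalid}, is more self-contained than the paper, which cites the classification result.

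For d) your main route is genuinely different. The paper evaluates $\int_{\R^N}\phi_{N,s}\,d\mu_{N,s}$ explicitly as a difference of two Beta integrals. You instead integrate \eqref{Ec.LemN} and use the decay of the flux, $r^{N-1}|g_{N,s}'|^{N-2}|\phi_{N,s}'|\sim r^{-\frac{N-s}{N-1}}\to 0$; equivalently, you test the weak formulation with $\rho\equiv 1$, which Remark \ref{ExtEqtoWmuns} allows. Your argument explains the zero mean structurally: it holds for any solution of the divergence-form equation whose flux decays. But it depends on b) and c), and you should state that the flux through $\partial B_\varepsilon$ is discarded by the same $\varepsilon^{N-s}$ bound. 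The paper's computation is independent of the other parts.

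Two slips need fixing.
\begin{enumerate}
\item The correct gradient is $\nabla\phi_{N,s}=-\frac{N(N-s)}{N-1}\frac{\nabla z}{(1+z)^2}=\frac{N-s}{N-1}\frac{1}{1+z}\nabla g_{N,s}$. Your constants $\frac{N(N-s)}{(N-1)^2}$ and $\frac{1}{N-1}$ are wrong and inconsistent with each other. Only decay rates are used, so a), c) and your flux argument are unaffected.
\item Your alternative direct computation for d) is wrong. In the variable $z=|x|^{\frac{N-s}{N-1}}$ one has $d\mu_{N,s}=(N-1)\frac{z^{N-2}}{(1+z)^N}\,dz$, so the integrand must carry the factor $z^{N-2}$. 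Without it, your own values give $(N-1)\cdot\frac1N-\frac{1}{N(N-1)}=\frac{N-2}{N-1}\neq 0$ for $N\ge 3$. With the Jacobian restored, the integrals are $B(N-1,2)=\frac{1}{N(N-1)}$ and $B(N,1)=\frac1N$, and $(N-1)\frac{1}{N(N-1)}-\frac1N=0$. This is exactly the paper's computation. Either correct this alternative or drop it.
\end{enumerate}
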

	\begin{proof}
		We first observe that
		\begin{align*}\phi_{N,s}(x)\mu_{N,s}(x)\sim |x|^{\frac{s-N^2}{N-1}}, \quad \text{ as $|x|\rightarrow +\infty$;}
		\end{align*}
		\begin{align*}\hspace{-0.9cm}\phi_{N,s}(x)\mu_{N,s}(x)\sim |x|^{-s}, \quad \text{ as $|x|\rightarrow 0$.}
		\end{align*}
		Since  $s \in [0,N)$ implies $(s-N^2)/(N-1)<-N$, we deduce that $ \phi_{N,s}\in L^1(\mathbb{R}^N,d \mu_{N,s})$.\\
		Now, taking into account Remark \ref{proprietagradvnslnxs}, we have
		\begin{equation}\label{nablaphi}
			\nabla \phi_{N,s}(x)= - N\left(\frac{N-s}{N-1}\right)^2 \frac{|x|^{\frac{1-s}{N-1}}}{(1+|x|^{\frac{N-s}{N-1}})^2} \frac{x}{|x|} = \frac{N-s}{N-1} \frac{1}{1+|x|^{\frac{N-s}{N-1}}} \nabla g_{N,s}(x),
		\end{equation}
		thus
		$ \left|\nabla \phi_{N,s}\right| \in L^{N}(\mathbb{R}^N,dx)$ and $|\nabla \phi_{N,s}|^2|\nabla g_{N,s}|^{N-2} \in L^{1}(\mathbb{R}^N,dx)$. As a consequence, \ref{a)} holds.
		
		To prove  statement \ref{b)}, observe that 
		\[\phi_{N,s}(x)=\left.\frac{\partial w_t(x)}{\partial t} \right|_{t=1},\]
		where $w_t(x)=g_{N,s}(tx)+(N-s)\ln t$ for $t>0$ are weak solutions to \eqref{LiovilleNoIntegral}, according to \cite{CirEspLi-2026}. Moreover, due to the regularity of $w_t$ in $\mathbb{R}^N\setminus\{0\}$,  almost everywhere we have
		\[\left.\frac{\partial}{\partial t} \left( -\Delta_N w_t - \frac{\widetilde{\omega_{N,s}}}{N}\frac{e^{w_t}}{|x|^s}
		\right) \right|_{t=1}=0,\]
		and so 
		\[\left.\frac{\partial}{\partial t}(-\Delta_N w_t)\right|_{t=1}=\frac{\widetilde{\omega_{N,s}}}{N}\mu_{N,s}\phi_{N,s}.\]
		On the other hand, we have
		\begin{align*}
			\left.\frac{\partial}{\partial t}(\Delta_N w_t)\right|_{t=1}=&\left.\text{div}\left((N-2)|\nabla w_t|^{N-4}\left(\nabla w_t \cdot \frac{\partial }{\partial t}(\nabla w_t)\right) \nabla w_t+ \frac{\partial }{\partial t} (\nabla w_t)|\nabla w_t|^{N-2}\right)\right|_{t=1}
			\\	=&\left.\text{div}\left((N-2)|\nabla w_t|^{N-4}\left(\nabla w_t \cdot \nabla\left(\frac{\partial }{\partial t}w_t\right)\right) \nabla w_t +\nabla\left(\frac{\partial }{\partial t} w_t\right)|\nabla w_t|^{N-2}\right)\right|_{t=1}
			\\	= &\, \text{div} \left(|\nabla g_{N,s}|^{N-2}\nabla \phi_{N,s}+(N-2)|\nabla g_{N,s}|^{N-4}(\nabla g_{N,s}\cdot \nabla \phi_{N,s})\nabla g_{N,s}\right),
		\end{align*}
		where we have used that the divergence and the gradient commute with $\partial/\partial t$. Therefore $\phi_{N,s}$ solves \eqref{Ec.LemN} a.e. in $\R^N$. 
		
		We now prove statement \ref{c)}. For any $ r>1 $, considering the regularity of $g_{N,s}$ and $\phi_{N,s}$ in $ B_{r} \setminus B_{1/r}$, Gauss-Green formula yields 
		\begin{equation*}
			\begin{aligned}
				&-\int_{B_r\setminus B_{1/r}} \text{div} \biggl[ \, |\nabla g_{N,s}|^{N-2}\nabla \phi_{N,s}+(N-2)|\nabla g_{N,s}|^{N-4}(\nabla g_{N,s}\cdot \nabla \phi_{N,s})\nabla g_{N,s} \biggr] \rho \, dx  \\
				=&\int_{B_r\setminus B_{1/r}} \left(|\nabla g_{N,s}|^{N-2}\nabla \phi_{N,s}+(N-2)|\nabla g_{N,s}|^{N-4}(\nabla g_{N,s} \cdot \nabla\phi_{N,s})\nabla g_{N,s}\right) \cdot  \nabla \rho \, dx\\
				& -\int_{\partial \left( B_r \setminus B_{1/r} \right)} \rho \left(|\nabla g_{N,s}|^{N-2}\nabla \phi_{N,s}+(N-2)|\nabla g_{N,s}|^{N-4}(\nabla g_{N,s}\cdot \nabla\phi_{N,s})\nabla g_{N,s}\right) \cdot \nu \, d \sigma
			\end{aligned}
		\end{equation*}
		for every $\rho\in C^{\infty}_0(\mathbb{R}^N)$,	where $\nu$ is the outward unit vector to $\partial \left(B_r \setminus B_{1/r} \right)$.
		
		In particular, we can rewrite the boundary integral as
		\begin{align*}
			\dis 
			\int_{\partial \left( B_r \setminus B_{1/r} \right)}& \rho \left(|\nabla g_{N,s}|^{N-2}\nabla \phi_{N,s}+(N-2)|\nabla g_{N,s}|^{N-4}(\nabla g_{N,s}\cdot \nabla\phi_{N,s})\nabla g_{N,s}\right) \cdot \nu \, d \sigma \\
			= & \int_{\partial  B_r } \rho \left(|\nabla g_{N,s}|^{N-2}\nabla \phi_{N,s}+(N-2)|\nabla g_{N,s}|^{N-4}(\nabla g_{N,s}\cdot \nabla\phi_{N,s})\nabla g_{N,s}\right) \cdot \nu \, d \sigma \\
			\quad &- \int_{\partial B_{1/r} } \rho \left(|\nabla g_{N,s}|^{N-2}\nabla \phi_{N,s}+(N-2)|\nabla g_{N,s}|^{N-4}(\nabla g_{N,s}\cdot \nabla\phi_{N,s})\nabla g_{N,s}\right) \cdot \nu \, d \sigma.
		\end{align*}
		Concerning   $\partial B_r$, we get
		\begin{align*}
			& \left| \int_{\partial  B_r } \rho \left(|\nabla g_{N,s}|^{N-2}\nabla \phi_{N,s}+(N-2)|\nabla g_{N,s}|^{N-4}(\nabla g_{N,s}\cdot \nabla\phi_{N,s})\nabla g_{N,s}\right) \cdot \nu \, d \sigma   \right| \\
			& \quad \leq 	(N-1) \lVert \rho \rVert_{L^{\infty}(\mathbb{R}^N)} \int_{\partial  B_r } |\nabla g_{N,s}|^{N-2} \left| \nabla \phi_{N,s}  \right| \, d \sigma \\
			& \qquad =  C_{N,s}	\int_{\partial B_r } \frac{|x|^{1-s}}{(1+|x|^{\frac{N-s}{N-1}})^N} \, d \sigma
			=C_{N,s}\omega_{N-1}\frac{r^{N-s}}{(1+r^{\frac{N-s}{N-1}})^N}\rightarrow 0 \quad \text{ as $r \to +\infty$.}
		\end{align*}
		Analogously, the integral over the boundary $\partial B_{1/r}$ converges to zero.\\
		Now, observe that \ref{b)} also implies that
$$ - \text{div} \biggl[ \, |\nabla g_{N,s}|^{N-2}\nabla \phi_{N,s}+(N-2)|\nabla g_{N,s}|^{N-4}(\nabla g_{N,s}\cdot \nabla \phi_{N,s})\nabla g_{N,s} \biggr] \rho  = \frac{\widetilde{\omega_{N,s}}}{N}	\phi_{N,s} \rho \mu_{N,s} $$
in $\mathbb{R}^N \setminus\{0\}$ and $\phi_{N,s} \rho \in L^{1}( \mathbb{R}^N,  d \mu_{N,s}).$
On the other hand,
		\begin{align*}
			\dis  & \bigg| \, \left(|\nabla g_{N,s}|^{N-2}\nabla \phi_{N,s}+(N-2)|\nabla g_{N,s}|^{N-4}(\nabla g_{N,s} \cdot \nabla\phi_{N,s})\nabla g_{N,s}\right) \cdot  \nabla \rho  \, \bigg|  \\
			&\quad \leq (N-1) |\nabla g_{N,s}|^{N-2} \left| \nabla \phi_{N,s}  \right| \left| \nabla \rho \right|,
		\end{align*}
		where $|\nabla g_{N,s}|^{N-2} \left| \nabla \phi_{N,s}  \right| \left| \nabla \rho \right| \in L^1(\mathbb{R}^N,dx)$ follows by $\phi_{N,s}\in\wmns$,  Remark \ref{proprietagradvnslnxs} and  H\"older inequality.  Therefore, we can apply the Dominated Convergence Theorem to conclude that 
		$\phi_{N,s}$ satisfies \eqref{Ec2.LemN} for every $\rho\in C^{\infty}_0(\mathbb{R}^N)$, and so is a weak solution of \eqref{Ec.LemN}.\\
		Finally, \ref{d)} is also achieved since 
		\begin{align*} 
			\int_{\mathbb{R}^N}\phi_{N,s}\mu_{N,s} \,dx=&  C_{N,s}(N-1)\int_0^{\infty}\frac{r^{N-1-s}}{(1+r^{\frac{N-s}{N-1}})^{N+1}} \, dr-C_{N,s}\int_0^{\infty}\frac{r^{\frac{N-s}{N-1}}r^{N-1-s}}{(1+r^{\frac{N-s}{N-1}})^{N+1}}\, dr
			\\=&C_{N,s}(N-1)\int_0^1(1-t)t^{N-2}\, dt-C_{N,s}\int_0^1 t^{N-1}\, dt= 0,
		\end{align*}
		where we have used the change of variable $r^{\frac{N-s}{N-1}}=t/(1-t)$.
	\end{proof}

	\begin{remark}\label{ExtEqtoWmuns}
		By density, note that $\phi_{N,s}$ satisfies \eqref{Ec2.LemN} for every $\rho\in \wmns$.
		
		In particular if we select $\phi_{N,s}$ as test function, we get
		\begin{equation}\label{eqphiN}
			\begin{aligned}
				\int_{\mathbb{R}^N} &|\nabla g_{N,s} |^{N-2}|\nabla \phi_{N,s}|^2+(N-2)|\nabla g_{N,s} |^{N-4}(\nabla g_{N,s} \cdot \nabla \phi_{N,s})^2\, dx \\
				&=\frac{\widetilde{\omega_{N,s}}}{N}\int_{\mathbb{R}^N} \mu_{N,s}\phi^2_{N,s}\, dx.
			\end{aligned}
		\end{equation}
		Furthermore, since both $g_{N,s}$ and $\phi_{N,s}$ are radially symmetric, we obtain
		\begin{equation}\label{SPECIALeqphiN}
			\begin{aligned}
				\int_{\mathbb{R}^N} \phi^2_{N,s}\, d\mu_{N,s} = \frac{N(N-1)}{\widetilde{\omega_{N,s}}}\int_{\mathbb{R}^N} |\nabla g_{N,s} |^{N-2}|\nabla \phi_{N,s}|^2 dx.
			\end{aligned}
		\end{equation}
	\end{remark}
	
	\begin{remark}
		A remarkable difference between the case $N= 2$ and $N>2$ is the absence of commutation between $\Delta_N$ and $\partial/\partial t$ due to the nonlinearity of the operator for $N>2$.
	\end{remark}

	\begin{proof}[\textbf{Proof of Proposition \ref{propN}}]
		
		Let us consider for $s \in [0,N)$ the quantity
		\[F_\lambda(u)=\frac{\frac{1}{\widetilde{\omega_{N,s}}}\int_{\mathbb{R}^N}H_{N,s}(u)\, dx+\lambda \int_{\mathbb{R}^N}u \, d\mu_{N,s} }{\ln \left(\int_{\mathbb{R}^N}e^u \, d\mu_{N,s}\right)}.\]
		Let us also consider the set 
		$$
		\W:=\left\{u\in \wmns \::\; \int_{\mathbb R^N} u \, d\mu_{N,s} >0\right\}, 
		$$
		so that 
		$$
		G_\lambda = \inf_{\W} F_\lambda.$$
		Note that $F_\lambda$ is well defined on $\W$ and in fact on a slightly larger set. Indeed, by Jensen's inequality $\ln (\int_{\mathbb{R}^N} e^u \, d\mu_{N,s})\geq \int_{\mathbb{R}^N} u \, d\mu_{N,s}$,  where the equality is only achieved for $u$ constant, so $F_{\lambda}(u)$ is well defined for every $u\in \wmns$ such that
		$$
		u\not\equiv 0 \quad \text{ and }\int_{\mathbb R^N} u \, d\mu_{N,s}\ge 0. 
		$$ 
		In particular, if $ u \in \W$, then $\ln (\int_{\mathbb{R}^N} e^u \, d\mu_{N,s})>0$ consequently, for any $\lambda>0$, $F_\lambda(u)\geq 0$, so   $G_\lambda$ is  well defined and $G_\lambda \geq 0$.\par 
		Due to the fact that positive constant functions belong to $\W$, we have that for $u\equiv d>0$,
		\[F_\lambda[d]= \lambda.\] 
		As a consequence, 
		\begin{equation}\label{Gleq}
			G_\lambda\leq \lambda.
		\end{equation}

		Let us observe that any $u\in \W$ can be written as $v+c$, with $c>0$ and  $v\in W_{\mu_{N,s}}(\mathbb{R}^N)$ satisfying that $\int_{\mathbb{R}^N}v \, d\mu_{N,s}=0$. Moreover, 
		\begin{equation*}
			F_\lambda (v+c)=\frac{\frac{1}{\widetilde{\omega_{N,s}}}\int_{\mathbb{R}^N}H_{N,s}(v)\, dx+\lambda \int_{\mathbb{R}^N}c \, d\mu_{N,s} }{c+\ln \left(\int_{\mathbb{R}^N}e^v \, d\mu_{N,s}\right)}.
		\end{equation*}
		Note that:
		\begin{enumerate}[label=$\arabic*$), ref=$\arabic*$)]	
			\item $\displaystyle\lim_{c\rightarrow 0} F_\lambda(v+c)=F_\lambda(v)$, provided $v\not\equiv 0$.\label{limitc}
			\item  From the fact that $\int_{\mathbb{R}^N}\mu_{N,s} \, dx=1$ and Theorem \ref{densitystatementANDextension},  
			\begin{equation}\label{EcAplicacion2}
				F_\lambda (v+c)\geq\frac{\ln \left(\int_{\mathbb{R}^N}e^v \, d\mu_{N,s}\right) +\lambda c}{c+\ln \left(\int_{\mathbb{R}^N}e^v \, d\mu_{N,s}\right)}.
			\end{equation}
		\end{enumerate}
		Let us take into account two cases:\par
		\medskip
		\noindent
		\textbf{Case 1.} $\lambda<1$. We can use \eqref{EcAplicacion2}, so 
		\[
		\begin{aligned}
			F_\lambda(v+c)& \geq \frac{\ln \left(\int_{\mathbb{R}^N}e^v \, d\mu_{N,s}\right) +\lambda c}{c+\ln \left(\int_{\mathbb{R}^N}e^v \, d\mu_{N,s}\right)} \\
			& = \lambda+(1-\lambda)\frac{\ln \left(\int_{\mathbb{R}^N}e^v \, d\mu_{N,s}\right)}{c+\ln \left(\int_{\mathbb{R}^N}e^v \,  d\mu_{N,s}\right)}\geq \lambda.
		\end{aligned}
		\]
		Then $F_{\lambda}(u)\ge \lambda$ for any  $u\in \W$ and  $G_\lambda \geq \lambda$. By \eqref{Gleq}, $G_\lambda =\lambda$ and the proof is concluded in this case.
		\medskip
		\par 
		\noindent
		\textbf{Case 2.} $\lambda\geq 1$.
		Again, by \eqref{EcAplicacion2}, we get $F_\lambda(u)\geq 1$ 
		for any $u\in \W$. Then,
		\begin{equation*}
			G_\lambda\geq 1.
		\end{equation*}
		
		We need to prove the reverse inequality. We may choose $v$ as $\varepsilon \phi_{N,s}$, with $\phi_{N,s}$ as in Lemma \ref{existencesolutionN} and $\varepsilon>0$. Then 
		\[F_\lambda(\varepsilon \phi_{N,s})=\frac{\frac{1}{\widetilde{\omega_{N,s}}}\int_{\mathbb{R}^N}H_{N,s}(\varepsilon\phi_{N,s})\, dx +\lambda \varepsilon \int_{\mathbb{R}^N}\phi_{N,s}\, d\mu_{N,s}}{\ln \left(\int_{\mathbb{R}^N}e^{\varepsilon\phi_{N,s}}\, d\mu_{N,s}\right)}.\]
		On the other hand, through Taylor expansion, considering that $\int_{\mathbb{R}^N}\phi_{N,s}d\mu_{N,s}=0$, we get
		\begin{equation}\label{TaylorExpansionExp}
			\begin{aligned}
				\ln \left(\int_{\mathbb{R}^N}e^{\varepsilon\phi_{N,s}} \, d\mu_{N,s}\right)=&\ln \left(1+\frac{1}{2}\int_{\mathbb{R}^N}\varepsilon^2\phi_{N,s}^2 \, d\mu_{N,s}+ o(\varepsilon^2)\right)\\
				&= \frac{\varepsilon^2}{2}\int_{\mathbb{R}^N}\phi_{N,s}^2 \, d\mu_{N,s}+o(\varepsilon^2).
			\end{aligned}
		\end{equation}
		Hence,
		\begin{align*}
			F_\lambda(\varepsilon \phi_{N,s})=&
			\frac{\frac{1}{\widetilde{\omega_{N,s}}}\int_{\mathbb{R}^N}H_{N,s}(\varepsilon\phi_{N,s})\, dx}{\frac{\varepsilon^2}{2}\int_{\mathbb{R}^N}\phi_{N,s}^2 \, d\mu_{N,s}+o(\varepsilon^2)}.
		\end{align*}
		The next step is to take into account that $\phi_{N,s}$ solves \eqref{eqphiN} and to achieve that
		\[F_\lambda(\varepsilon \phi_{N,s})\leq 1+o(1),\]
		as $\varepsilon\to 0$.  
		
		First, for $X,Y\in \R^N$, from Taylor's Theorem with Lagrange remainder for the function $f(t)=|X+tY|^N$,  
		there exists some $c\in (0,1)$ such that 
		\[f(1)= f(0)+f'(0)+\frac{f''(0)}{2}+\frac{f^{3)}(c)}{3!}.\]
		
		If $N=2$, $f^{3)} \equiv 0$. If $N\ge 3$, for $X=\nabla g_{N,s}$, $Y=\nabla \varepsilon \phi$,
		\begin{align*}
			|f^{3)}(c)|=&\left|N(N-2)(N-4)|\nabla g_{N,s}+c\varepsilon \nabla \phi_{N,s} |^{N-6}((c\varepsilon\nabla\phi_{N,s} +\nabla g_{N,s})\cdot \varepsilon\nabla \phi_{N,s})^3\right.\\&\left.+
			3N(N-2)|\nabla g_{N,s}+c\varepsilon\nabla\phi_{N,s}|^{N-4}((\nabla g_{N,s}+c\varepsilon\nabla\phi_{N,s})\cdot \varepsilon\nabla\phi_{N,s})|\varepsilon\nabla\phi_{N,s}|^2\right|
			\\\leq& C_N\varepsilon^3|\nabla \phi_{N,s}|^3 |\nabla g_{N,s}|^{N-3},
		\end{align*}
		then from Remark \ref{Nlaplaciano} and \eqref{nablaphi} the following estimates are deduced,
		\begin{align*}
			|\nabla \phi_{N,s}(x)|^3 |\nabla g_{N,s}(x)|^{N-3}&\sim |x|^{N\frac{1-s}{N-1}},\quad\text{ as $|x|\rightarrow 0$;}\\
			|\nabla \phi_{N,s}(x)|^3 |\nabla g_{N,s}(x)|^{N-3}&\sim |x|^{\frac{-2N+3s-N^2}{N-1}},\quad\text{ as $|x|\rightarrow +\infty$.}
		\end{align*}
		Since $N\frac{1-s}{N-1}>-N$ and  $\frac{-2N+3s-N^2}{N-1}<-N$ for $s<N$, there exists some $h\in L^1(\mathbb{R}^N,dx)$ such that
		\begin{equation}\label{estimatef^3}
			|f^{3)}(c)|\leq \varepsilon^3 h.
		\end{equation}
		
		Hence,
		\begin{align*}
			|\nabla g_{N,s}+\varepsilon \nabla \phi_{N,s}|^N=&|\nabla g_{N,s}|^N+N|\nabla g_{N,s}|^{N-2}\nabla g_{N,s}\cdot \varepsilon\nabla\phi_{N,s}\\& +\frac{N(N-2)}{2}|\nabla g_{N,s}|^{N-4}(\nabla g_{N,s}\cdot \varepsilon\nabla\phi_{N,s})^2 \\&+\frac{N}{2} |\nabla g_{N,s}|^{N-2}|\varepsilon\nabla\phi_{N,s}|^2 + \frac{f^{3)}(c)}{3!}.
		\end{align*}
		As a consequence,
		\begin{align}\label{TaylorHN}
			\int_{\mathbb{R}^N} H_{N,s}(\varepsilon\phi_{N,s})\, dx=&\frac{\varepsilon^2N}{2}\int_{\mathbb{R}^N}(N-2)|\nabla g_{N,s}|^{N-4}(\nabla g_{N,s}\cdot \nabla\phi_{N,s})^2  dx\\&+\frac{\varepsilon^2N}{2}\int_{\mathbb{R}^N}|\nabla g_{N,s}|^{N-2}|\nabla\phi_{N,s}|^2\, dx +\frac{1}{3!}\int_{\mathbb{R}^N}f^{3)}(c)\, dx. \nonumber
		\end{align}
		Then, using that $\phi_{N,s}$ solves \eqref{eqphiN}, 
		\begin{align*}
			\frac{1}{\widetilde{\omega_{N,s}}}\int_{\mathbb{R}^N} H_{N,s}(\varepsilon\phi_{N,s})\, dx=&\frac{\varepsilon^2}{2}\int_{\mathbb{R}^N}\phi_{N,s}^2 \mu_{N,s}\, dx +  \frac{1}{3!\, \widetilde{\omega_{N,s}}} \int_{\mathbb{R}^N}f^{3)}(c)\, dx
		\end{align*}
		so,
		\[F_\lambda(\varepsilon\phi_{N,s})=\frac{\frac{\varepsilon^2}{2}\int_{\mathbb{R}^N}\phi_{N,s}^2 \, d\mu_{N,s}	+ \frac{1}{3!\, \widetilde{\omega_{N,s}}}\int_{\mathbb{R}^N}f^{3)}(c)\, dx}{\frac{\varepsilon^2}{2}\int_{\mathbb{R}^N}\phi_{N,s}^2 \, d\mu_{N,s}+o(\varepsilon^2)}.\]	
		
		Additionally, due to \eqref{estimatef^3},
		\[F_\lambda(\varepsilon\phi_{N,s})\leq \frac{\frac{\varepsilon^2}{2}\int_{\mathbb{R}^N}\phi_{N,s}^2 \, d\mu_{N,s}	+\frac{\varepsilon^3}{3!\, \widetilde{\omega_{N,s}}}\int_{\mathbb{R}^N}h \, dx}{\frac{\varepsilon^2}{2}\int_{\mathbb{R}^N}\phi_{N,s}^2 \, d\mu_{N,s}+o(\varepsilon^2)}= \frac{\frac{\varepsilon^2}{2}\int_{\mathbb{R}^N}\phi_{N,s}^2 \, d\mu_{N,s}	+o(\varepsilon^2)}{\frac{\varepsilon^2}{2}\int_{\mathbb{R}^N}\phi_{N,s}^2 \, d\mu_{N,s}+o(\varepsilon^2)}\to 1,\]
		as $\varepsilon\to 0$.	Then, due to \ref{limitc}, we get
		$$
		\limsup_{\varepsilon \to 0^+} \lim_{c\to 0^+} F_\lambda ( \varepsilon \phi_{N,s}+c) = \limsup_{\varepsilon \to 0^+} F_\lambda(\varepsilon \phi_{N,s}) \le 1.
		$$
		Since for any $c,\varepsilon>0$, we have $\varepsilon \phi_{N,s}+c\in \W$, we conclude that  
		\begin{align*}
			G_\lambda = 1, \text{ for $\lambda\geq 1$}.
		\end{align*}
	\end{proof}
	
	As a consequence, the optimality of the constant $1/\widetilde{\omega_{N,s}}$ in \eqref{MT-SingularityNewSpace} is deduced. Indeed, if there were a real number $\lambda > 1$ such that
	\begin{equation*}
		\displaystyle \ln\left( \int_{\mathbb{R}^N} e^u \, d\mu_{N,s} \right)\leq \frac{1}{ \lambda \widetilde{\omega_{N,s}}}\int_{\mathbb{R}^N} H_{N,s}(u)\, dx + \int_{\mathbb{R}^N} u \, d\mu_{N,s}
	\end{equation*}
	for any function $u\in \W$, it would follow that
	$$ \dis   1 \leq \frac{1}{\lambda} G_{\lambda},$$
	which is a contradiction to the fact that $G_{\lambda}= 1$ whenever $\lambda >1$.

	Finally, we show that the following Poincar\'e type inequality, proved  in \cite{LamLu-2018} for smooth compact functions, can be also extended to our space $\wmns$.
	
	\begin{proof}[\textbf{Proof of Proposition \ref{Poincare}}]
		Let us consider $\varepsilon(u-\overline{u})$ with $\varepsilon>0$. Since $\varepsilon(u-\overline{u})\in W_{\mu_{N,s}}(\mathbb{R}^N)$, by Theorem \ref{densitystatementANDextension} we deduce
		\begin{align*}
			\lim_{\varepsilon\rightarrow 0}\left[\frac{1}{\varepsilon^2}\displaystyle \ln\left( \int_{\mathbb{R}^N} e^{\varepsilon(u-\overline{u})} \, d\mu_{N,s} \right)\right]\leq 	\lim_{\varepsilon\rightarrow 0}\left[\frac{1}{\varepsilon^2\widetilde{\omega_{N,s}}}\int_{\mathbb{R}^N} H_{N,s}(\varepsilon(u-\overline{u}))\, dx\right].
		\end{align*}
		Therefore, the result follows from using Taylor expansions as in \eqref{TaylorExpansionExp} and \eqref{TaylorHN}, that is
		\begin{align*}
			\lim_{\varepsilon\rightarrow 0}\left[\frac{1}{\varepsilon^2}\displaystyle \ln\left( \int_{\mathbb{R}^N} e^{\varepsilon(u-\overline{u})} \, d\mu_{N,s} \right)\right]=\frac{1}{2}\int_{\mathbb{R}^N}|u-\overline{u}|^2\, d\mu_{N,s},
		\end{align*}
		and 
		\begin{align*}
			\lim_{\varepsilon\rightarrow 0}\left[\frac{1}{\varepsilon^2\widetilde{\omega_{N,s}}}\int_{\mathbb{R}^N} H_{N,s}(\varepsilon(u-\overline{u}))\, dx\right]=&\frac{N(N-2)}{2\widetilde{\omega_{N,s}}}\int_{\mathbb{R}^N} |\nabla g_{N,s}|^{N-4}(\nabla g_{N,s}\cdot \nabla u )^2\\&+\frac{N}{2\widetilde{\omega_{N,s}}}\int_{\mathbb{R}^N}|\nabla g_{N,s}|^{N-2}|\nabla u|^2\, dx.
		\end{align*}
		Moreover, \eqref{SPECIALeqphiN} and  Lemma \ref{existencesolutionN} imply that the equality is achieved by the function $\phi_{N,s}$.
	\end{proof}
	\begin{remark}
		For $N=2$ we have the classical Poincar\'e inequality. Moreover, as a consequence of the previous proposition, any $u\in\wmns$ satisfies 
		
		\begin{align*}
			\int_{\mathbb{R}^N}|u-\overline{u}|^2\, d\mu_{N,s}\leq& \frac{N(N-1)}{\widetilde{\omega_{N,s}}} \int_{\mathbb{R}^N}|\nabla g_{N,s}|^{N-2}|\nabla u|^2\, dx.
		\end{align*}
	\end{remark}

	\section{Equivalence}\label{SectionEquivalence}
	In this section we obtain inequality \eqref{Chang-Carleson-Singularity} and establish the equivalence with \eqref{MT-SingularityNewSpace}. First, we recall that Carleson and Chang in \cite{CarCha-1986} obtained for any $u \in W_0^{1,N}(B_1)$ that
	\begin{equation}\label{InequalityCC}
		\ln \left(\frac{1}{|B_1|} \int_{B_1} e^u \, dx \right)< \frac{1}{\widetilde{\omega_{N}}}\int_{B_1} |\nabla u|^N + \sum_{k=1}^{N-1} \frac{1}{k}.  
	\end{equation}

	On the other hand, as a consequence of~\eqref{MTSingularity}, one obtains the following logarithmic version of the Moser--Trudinger inequality with a singularity. The proof is rather standard by means of Young's inequality.

	\begin{proposition}\label{logMTsingular}
		Let $N \geq 2$, $s\in [0,N)$ and let  $\Omega \subset \mathbb{R}^N$ be a bounded domain containing the origin. For any $u\in W^{1,N}_0(\Omega)$ we have
		\begin{equation}\label{logMTsingularinequality}
			\displaystyle \ln\left(\frac{1}{|\Omega|^{1-\frac{s}{N}}} \int_{\Omega} \frac{e^u }{|x|^s} \, dx \right) \leq \frac{1}{\widetilde{\omega_{N,s}}}\int_\Omega |\nabla u|^N \, dx  + \ln C_{N,s},
		\end{equation}
		
		\noindent
		where $C_{N,s}$ is the constant appearing in \eqref{MTSingularity}. 	
	\end{proposition}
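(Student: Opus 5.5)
We reduce \eqref{logMTsingularinequality} to \eqref{MTSingularity} through a pointwise Young inequality. Fix $u\in W^{1,N}_0(\Omega)$ with $u\not\equiv 0$; if $u\equiv 0$, the claim reduces to $|\Omega|^{s/N-1}\int_\Omega |x|^{-s}dx\le C_{N,s}$, which is \eqref{MTSingularity} applied to the zero function. Set $\beta:=\alpha_N(1-\frac sN)$ and $v:=u/\|\nabla u\|_N$, so that $\|\nabla v\|_N=1$ and \eqref{MTSingularity} gives
$$\frac{1}{|\Omega|^{1-\frac sN}}\int_\Omega \frac{e^{\beta|v|^{\frac{N}{N-1}}}}{|x|^s}\,dx\le C_{N,s}.$$

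The key step is the Young inequality $ab\le \frac{a^p}{p}+\frac{b^{p'}}{p'}$ with $p=\frac{N}{N-1}$ and $p'=N$. We split
$$u\le |u| = \Big(\beta^{\frac{N-1}{N}}p^{\frac{N-1}{N}}|v|\Big)\cdot\Big(\beta^{-\frac{N-1}{N}}p^{-\frac{N-1}{N}}\|\nabla u\|_N\Big).$$
This yields
$$u\le \beta|v|^{\frac N{N-1}}+\frac{\|\nabla u\|_N^N}{N\,\beta^{N-1}p^{N-1}}.$$
Next we check the constant: it should equal $1/\widetilde{\omega_{N,s}}$. Indeed,
$$N\beta^{N-1}\Big(\frac{N}{N-1}\Big)^{N-1}=N\cdot N^{N-1}\omega_{N-1}\Big(\frac{N-s}{N}\Big)^{N-1}\Big(\frac{N}{N-1}\Big)^{N-1}=N^N\Big(\frac{N-s}{N-1}\Big)^{N-1}\omega_{N-1},$$
which is exactly $\widetilde{\omega_{N,s}}$.

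We then exponentiate the pointwise bound, multiply by $|x|^{-s}|\Omega|^{s/N-1}$, and integrate over $\Omega$. This gives
$$\frac{1}{|\Omega|^{1-\frac sN}}\int_\Omega\frac{e^u}{|x|^s}\,dx\le e^{\frac{1}{\widetilde{\omega_{N,s}}}\int_\Omega|\nabla u|^N}\,C_{N,s}.$$
Taking logarithms gives \eqref{logMTsingularinequality}. The only delicate point is the bookkeeping of the Young constants so that the coefficient matches $\widetilde{\omega_{N,s}}$ exactly; there is no genuine analytic obstacle.
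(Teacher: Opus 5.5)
Your proof is correct and takes the same route the paper indicates: the paper only says the estimate follows from \eqref{MTSingularity} by Young's inequality. Your choice of exponents $\frac{N}{N-1}$ and $N$, the check $N\beta^{N-1}\bigl(\frac{N}{N-1}\bigr)^{N-1}=\widetilde{\omega_{N,s}}$, and the separate treatment of $u\equiv 0$ supply exactly the details the paper omits.
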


	Now if we consider $\Omega=B_1$, we can improve inequality \eqref{logMTsingularinequality} by means of inequality \eqref{InequalityCC}. To see this, let us recall some results proved in \cite{LamLu-2017} (see also \cite{LamLu-2018}). 
	
	Let $\Omega \subset \mathbb{R}^N$ be a bounded domain, let $D:=\{x\in \mathbb{R}^N: |x|^{\frac{s}{N-s}}x\in \Omega \}$ and let  us consider the vector-valued functions $F_{N,s}:  D \to \Omega$ and $F_{N,s}^{-1}:  \Omega \to D$ defined by
	$$ \displaystyle F_{N,s}(x):= |x|^{\frac{s}{N-s}}x;$$
	$$ \displaystyle F_{N,s}^{-1}(x):= |x|^{-\frac{s}{N}}x.$$
	
	\noindent
	By a direct computation, we have
	$$ \displaystyle \det \left( J_{F_{N,s}}(x) \right) = \frac{N}{N-s} |x|^{\frac{Ns}{N-s}}. $$
	
	\noindent
	Now, let us consider the map $D_{N,s}$ and $D_{N,s}^{-1}$ that associate to a function $u: \Omega \to \mathbb{R}$  and $v: D \to \mathbb{R}$ the functions  $D_{N,s} u: D \to \mathbb{R}$ and $D_{N,s}^{-1} v: \Omega \to \mathbb{R}$ defined as
	$$D_{N,s} u(x):=\left(\frac{N-s}{N}\right)^{\frac{1}{N'}}u(F_{N,s}(x))=\left(\frac{N-s}{N}\right)^{\frac{1}{N'}}u(|x|^{\frac{s}{N-s}}x);$$
	$$D_{N,s}^{-1} v(x):=\left(\frac{N}{N-s}\right)^{\frac{1}{N'}}v(F_{N,s}^{-1}(x))=\left(\frac{N}{N-s}\right)^{\frac{1}{N'}}v(|x|^{-\frac{s}{N}}x).$$
	
	\noindent
	Let us recall the following result (see \cite[Lemma 2.2]{LamLu-2017} and \cite[Lemma 2.1]{DonLamLu-2018}).
	
	\begin{lemma}\label{LamLuLemma}
		
		\begin{enumerate}
			
			\item \label{Prop3}If $f$ is a continuous function, then
			\[\int_{\Omega}\frac{f \left(\left(\frac{N-s}{N}\right)^{1/N'}u(y)\right)}{|y|^s} \, dy =\frac{N}{N-s} \int_D f(D_{N,s}u (x)) \, dx;\]
			\[\int_{D} f(v(x)) \, dx =\frac{N-s}{N} \int_{ \Omega} \frac{f \left(\left(\frac{N-s}{N}\right)^{1/N'} D_{N,s}^{-1} v(y)\right)}{|y|^s}  \,  dy.\]

			\item \label{Prop4} If $\nabla u\in L^N(\Omega,dx)$, then $\nabla D_{N,s} u \in L^N(D,dx)$. Also if $\nabla D_{N,s}^{-1}v\in L^N(\Omega,dx)$, then $\nabla v\in L^N(D,dx)$. Moreover,  
			$$\int_D |\nabla D_{N,s} u(x)|^N \, dx \leq \int_{\Omega} |\nabla u(y)|^N \, dy;$$
			$$\int_D |\nabla v(x)|^N \, dx \leq \int_{\Omega} |\nabla  D_{N,s}^{-1} v(y)|^N \, dy.$$
			
			\noindent				
			The equality occurs if and only if $u$ is radially symmetric.
			
		\end{enumerate}
	\end{lemma}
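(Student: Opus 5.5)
The plan is to verify the two parts of Lemma \ref{LamLuLemma} directly through the change of variables $y=F_{N,s}(x)=|x|^{\frac{s}{N-s}}x$. This map is a homeomorphism of $D$ onto $\Omega$ and a smooth diffeomorphism away from the origin, which is a null set. Throughout, $N'=N/(N-1)$ is the conjugate exponent.

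For part (\ref{Prop3}), substitute $y=F_{N,s}(x)$ and use the stated Jacobian $\det J_{F_{N,s}}(x)=\frac{N}{N-s}|x|^{\frac{Ns}{N-s}}$. Since $|y|=|x|^{\frac{N}{N-s}}$, we have $|y|^{-s}=|x|^{-\frac{Ns}{N-s}}$, so
$$
\frac{dy}{|y|^s}=\frac{N}{N-s}\,dx .
$$
Moreover, by definition $\left(\frac{N-s}{N}\right)^{1/N'}u(F_{N,s}(x))=D_{N,s}u(x)$, which gives the first identity. The second identity is the same computation read backwards with $v=D_{N,s}u$, that is $u=D_{N,s}^{-1}v$; one checks that $D_{N,s}$ and $D_{N,s}^{-1}$ are mutually inverse. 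Continuity of $f$ only ensures measurability, and the identities hold in $[0,+\infty]$ for $f\ge0$ and in general whenever either side is integrable.

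For part (\ref{Prop4}), work in polar coordinates $x=r\theta$, so that $F_{N,s}(r\theta)=r^{\frac{N}{N-s}}\theta$. Write $\rho=r^{N/(N-s)}$ and $\beta=\frac{N-s}{N}\in(0,1]$. Split $\nabla u$ at $y=\rho\theta$ into the radial derivative $u_\rho$ and the tangential part $\rho^{-1}\nabla_\theta u$. The chain rule gives
$$
|\nabla D_{N,s}u(x)|^N=\beta^{N-1}\Bigl(\beta^{-2}\rho^{2/N}\ldots\Bigr),
$$
and it is cleaner to compute it in components:
$$
\partial_r(D_{N,s}u)=\beta^{1/N'}\,\beta^{-1}r^{\frac{s}{N-s}}u_\rho,\qquad r^{-1}\nabla_\theta(D_{N,s}u)=\beta^{1/N'}\,r^{-1}\nabla_\theta u .
$$
Multiply by $dx=r^{N-1}dr\,d\theta$ and pass to $dy=\rho^{N-1}d\rho\,d\theta$, using $d\rho=\beta^{-1}r^{\frac{s}{N-s}}dr$. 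After the bookkeeping, the radial contribution should equal $\int|u_\rho|^N\,dy$ exactly; this is precisely why the normalizing factor $\beta^{1/N'}$ was chosen. The tangential component, by contrast, acquires a factor $\beta^{1/N'}\cdot(\text{power }\le1)$ relative to $\rho^{-1}|\nabla_\theta u|$, namely a constant $\beta^{\,\cdot}\le1$ once the radial weights cancel. The pointwise comparison to aim for is
$$
\bigl(a^2+\beta^2 b^2\bigr)^{N/2}\le (a^2+b^2)^{N/2},
$$
with $a$ the radial and $b$ the tangential component (suitably rescaled), integrated against $dy$. Equality holds if and only if $b=0$ a.e., that is $\nabla_\theta u=0$, meaning $u$ is radial; for $s=0$ the map is the identity and equality is trivial. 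The inverse statement for $v$ follows by applying the first inequality to $u=D_{N,s}^{-1}v$.

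The main obstacle is the precise exponent bookkeeping in the tangential term, namely checking that all powers of $r$ cancel and that the surviving constant is exactly $\beta\le1$. A secondary issue is justifying the chain rule for Sobolev functions through a map that is only bi-Lipschitz away from the origin. For this I would first prove the inequality for $u\in C^1(\overline\Omega)$ (or for $u$ smooth away from $0$), and then extend to $\nabla u\in L^N$ by density, using local bi-Lipschitz charts on annuli together with monotone convergence as the annuli exhaust $\Omega\setminus\{0\}$. In the equality case, the pointwise strict inequality on the set where $\nabla_\theta u\neq0$ forces that set to have measure zero.
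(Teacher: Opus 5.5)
Your argument is correct and is the standard one: the paper does not reprove this lemma but imports it from \cite{LamLu-2017} and \cite{DonLamLu-2018}, and the change of variables $y=F_{N,s}(x)$ with a radial/tangential splitting is exactly what verifies it, including your caveat that for $s=0$ equality holds for every $u$. The bookkeeping you left open closes as you predict, since $\rho/r=r^{\frac{s}{N-s}}$ and $dx=\beta\, r^{-\frac{Ns}{N-s}}\,dy$ give $|\nabla D_{N,s}u(x)|^N\,dx=\bigl(u_\rho^2+\beta^2\rho^{-2}|\nabla_\theta u|^2\bigr)^{N/2}dy$; in the Sobolev justification you may ignore the origin not merely because it is a null set, but because a point is removable for $W^{1,1}_{\mathrm{loc}}$ functions when $N\ge 2$ (by the ACL characterization).
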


	\begin{remark}\label{DNuSob}
		If $u\in W^{1,N}_0(\Omega)$, then $D_{N,s}(u)\in W^{1,N}_0(D)$.
		Indeed, taking into account the change of variables $y = F_{N,s}(x)$, we have
		\begin{align*}
			\displaystyle \int_D | D_{N,s} u(x)|^N \, dx
			& = \left(\frac{N-s}{N}\right)^{\frac{N}{N'}} \int_D |u(F_{N,s}(x))|^N \, dx \\
			& = \left(\frac{N-s}{N}\right)^{\frac{N}{N'}} \int_{\Omega} |u(y)|^N \frac{1}{\left| \det \left( J_{F_{N,s}}(x) \right) \right|}  \, dy \\
			& = \left(\frac{N-s}{N}\right)^{N} \int_{\Omega} |u(y)|^N \frac{1}{\left| y \right|^s}  \, dy.
		\end{align*}
		
		\noindent
		Observe that $u \in L^t(\Omega,dx)$ for any $t \geq 1$ and that for any $s \in [0, N)$ there exists $q>1$ such that $sq < N$. Hence, by H\"older inequality with exponents $q$ and $q'$,
		$$ \displaystyle \int_{\Omega} |u(y)|^N \frac{1}{\left| y \right|^s}  \, dy  \leq \lVert u \rVert_{Nq'}^N \left( \int_{\Omega} \frac{1}{\left| y \right|^{sq}}  \, dy \right)^{\frac{1}{q}} < + \infty.$$		
		Therefore, $ D_{N,s} u(x) \in L^N(D,dx)$.\\
		On the other hand, by (ii) of Lemma \ref{LamLuLemma} we have $ \left| \nabla D_{N,s} u \right| \in L^N(D,dx)$.\\ Consequently, $D_{N,s}(u)\in W^{1,N}_0(D)$.	
	\end{remark}
	
	\noindent
	We now proceed with the improvement of \eqref{logMTsingularinequality}.
	\begin{proposition} \label{MTsingballs}
		Let $N \geq 2$ and  $s\in [0,N)$. For any $u\in W^{1,N}_0(B_1)$ we have 
		\begin{align*}
			\ln \left(\frac{1}{|B_1|^{1-\frac{s}{N}}} \int_{B_1}  \frac{e^u}{|x|^s} \, dx \right)
			< \frac{1}{\widetilde{\omega_{N,s}}}\int_{B_1}| \nabla u|^N \, dx 
			+ \sum_{k=1}^{N-1}\frac{1}{k} + \ln\left(\frac{N}{N-s} \right)+ \frac{s}{N} \ln V_N .
		\end{align*} 
		
	\end{proposition}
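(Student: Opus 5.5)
The plan is to reduce to the classical Carleson--Chang inequality \eqref{InequalityCC} through the change of variables of Lemma \ref{LamLuLemma}, with $\Omega=B_1$. When $\Omega=B_1$ we have $D=B_1$ as well, since $|F_{N,s}(x)|=|x|^{\frac{N}{N-s}}$, so $F_{N,s}$ maps $B_1$ onto $B_1$. Given $u\in W^{1,N}_0(B_1)$, I define $a:=\left(\frac{N}{N-s}\right)^{1/N'}$ and set $\tilde u := a\,u$. I will then apply part (\ref{Prop3}) of Lemma \ref{LamLuLemma} with $f=\exp$ to the function $\tilde u$. Since $\left(\frac{N-s}{N}\right)^{1/N'}\tilde u = u$, this gives
\[
\int_{B_1}\frac{e^{u(y)}}{|y|^s}\,dy=\frac{N}{N-s}\int_{B_1} e^{v(x)}\,dx,\qquad v:=D_{N,s}\tilde u .
\]
By Remark \ref{DNuSob} we have $v\in W^{1,N}_0(B_1)$, so \eqref{InequalityCC} applies to $v$:
\[
\ln\Big(\frac{1}{|B_1|}\int_{B_1}e^{v}\,dx\Big)<\frac{1}{\widetilde{\omega_N}}\int_{B_1}|\nabla v|^N\,dx+\sum_{k=1}^{N-1}\frac1k .
\]

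Next I control the gradient term. Part (\ref{Prop4}) of Lemma \ref{LamLuLemma} gives
\[
\int_{B_1}|\nabla v|^N\le\int_{B_1}|\nabla\tilde u|^N=a^N\int_{B_1}|\nabla u|^N .
\]
Here $a^N=\left(\frac{N}{N-s}\right)^{N-1}$, and $\widetilde{\omega_{N,s}}=\left(\frac{N-s}{N}\right)^{N-1}\widetilde{\omega_N}$. Hence
\[
\frac{a^N}{\widetilde{\omega_N}}=\frac{1}{\widetilde{\omega_{N,s}}},
\]
which is exactly the constant in the statement.

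Finally I collect the constants. We have
\[
\ln\Big(\frac{1}{|B_1|^{1-s/N}}\int_{B_1}\frac{e^u}{|x|^s}\,dx\Big)=\ln\frac{N}{N-s}+\frac sN\ln|B_1|+\ln\Big(\frac1{|B_1|}\int_{B_1}e^v\,dx\Big),
\]
and $|B_1|=V_N$. Combining this with the two previous displays gives the strict inequality; strictness is inherited from \eqref{InequalityCC}.

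The main obstacle is justifying the use of Lemma \ref{LamLuLemma} for Sobolev functions and not merely smooth ones. The change-of-variables identity in part (\ref{Prop3}) is stated for continuous $f$, and I expect it to extend by monotone convergence, since $e^u$ is integrable against $|x|^{-s}$ by \eqref{MTSingularity}. The gradient inequality in part (\ref{Prop4}) is assumed for $\nabla u\in L^N$. One should also check that $F_{N,s}$ is a bi-Lipschitz-away-from-$0$ homeomorphism of $B_1$ preserving the zero trace. This is the content of Remark \ref{DNuSob}, so I would simply invoke it.
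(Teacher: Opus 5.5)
Your proof is correct and is essentially the paper's argument. The paper applies part (\ref{Prop3}) of Lemma \ref{LamLuLemma} to $u$ with $f(x)=e^{(N/(N-s))^{1/N'}x}$, whereas you pre-scale $u$ by $a$ and take $f=\exp$; in both cases \eqref{InequalityCC} is applied to the same function $u\circ F_{N,s}$, and the gradient term is handled by part (\ref{Prop4}) together with $a^N/\widetilde{\omega_N}=1/\widetilde{\omega_{N,s}}$. The obstacle you flag for part (\ref{Prop3}) is not a real one: $\exp$ is already continuous, and the identity is a change of variables on $B_1\setminus\{0\}$ valid for any measurable $u$, with both sides in $[0,+\infty]$ and finiteness then following from \eqref{InequalityCC} applied to $v$.
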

	
	\begin{proof}
		
		First, we notice that $D:=\{x\in \mathbb{R}^N: |x|^{\frac{s}{N-s}}x\in B_1 \} = B_1$.\\
		By applying \eqref{Prop3} of Lemma \ref{LamLuLemma} with $ \displaystyle f(x)=e^{\left(\frac{N}{N-s}\right)^{\frac{1}{N'}} x}$, we have
		\begin{align*}
			\displaystyle 
			\int_{B_1} \frac{e^{u(y)}}{|y|^s} \, dy 
			& = \int_{B_1} \frac{f \left(\left(\frac{N-s}{N}\right)^{1/N'}u(y)\right)}{|y|^s} \, dy \\
			& =  \frac{N}{N-s} \int_{B_{1}} f(D_{N,s}u (x)) \, dx\\& = \frac{N}{N-s} \int_{B_{1}} e^{\left(\frac{N}{N-s}\right)^{\frac{1}{N'}}  D_{N,s}u (x)} \, dx.
		\end{align*} 
		
		\noindent
		Therefore, we obtain
		\begin{align*}
			&\ln\left( \frac{1}{|B_1|^{1-\frac{s}{N}}}\int_{B_1} \frac{ e^{u(y)} }{|y|^s}\, dy\right)\\	&=\color{black}\ln\left(\frac{N}{N-s}\frac{1}{|B_1|^{1-\frac{s}{N}}}\int_{B_{1}} e^{(\frac{N}{N-s})^{\frac{1}{N'}} D_{N,s}u(x) }\, dx \right) \\		&=\color{black}\ln\left(\frac{N}{N-s} \right)  + \frac{s}{N} \ln V_N  +\ln\left(\frac{1}{| B_{1}|}\int_{B_{1}} e^{(\frac{N}{N-s})^{\frac{1}{N'}} D_{N,s}u(x) }\, dx \right).
		\end{align*}
		
		\noindent
		By Remark \ref{DNuSob} we know that $D_{N,s}(u)\in W^{1,N}_0(B_{1})$, so we can apply 
		\eqref{InequalityCC} and property \eqref{Prop4} of Lemma \ref{LamLuLemma} to deduce that
		\begin{align*}
			&\ln\left( \frac{1}{|B_1|^{1-\frac{s}{N}}}\int_{B_1}  \frac{e^{u(y)}}{|y|^s} \, dy\right)\\ 
			&<\color{black}\ln\left(\frac{N}{N-s} \right)+ \frac{s}{N} \ln V_N +\left(\frac{N}{N-s}\right)^{N-1}\frac{1}{\widetilde{\omega_{N}}}\int_{ B_{1}}|\nabla D_{N,s}(u)|^N \, dx + \sum_{k=1}^{N-1}\frac{1}{k}  
			\\ &\leq\color{black}  \frac{1}{\widetilde{\omega_{N,s}}}\int_{B_1}| \nabla u|^N \, dy  + \sum_{k=1}^{N-1}\frac{1}{k} + \ln\left(\frac{N}{N-s} \right)+ \frac{s}{N} \ln V_N ,
		\end{align*}
		where we have used that $\left(\frac{N}{N-s}\right)^{N-1}\frac{1}{\widetilde{\omega_{N}}}=\frac{1}{\widetilde{\omega_{N,s}}}$.
	\end{proof}
	
	\noindent
	Let us now consider the associated functional $J_s:  W^{1,N}_0(B_1) \to \mathbb{R}$ defined as
	\[J_s(u)=\frac{1}{ \widetilde{\omega_{N,s}}}\int_{B_1}|\nabla u|^N \, dx -\ln \left(\frac{1}{|B_1|^{1 - \frac{s}{N}}} \int_{B_1} e^u \frac{1}{|x|^s} \, dx \right).\]
	
	\noindent
	If $s=0$, that is in the  non-singular case, we recall (see \cite{BorCinMan-2025} and \cite{CarCha-1986}) that the functional  
	\[J(u):=J_0(u)=\frac{1}{\widetilde{\omega_{N}}}\int_{B_1}|\nabla u|^N \, dx -\ln \left(\frac{1}{|B_1|} \int_{B_1} e^u   \, dx \right)\]
	
	\noindent		
	is such that
	\begin{equation}\label{infNOsing}
		\displaystyle \inf_{W^{1,N}_0(B_1)} J= -\sum_{k=1}^{N-1}\frac{1}{k}.
	\end{equation}
	
	\noindent
	Taking into account this, we are able to prove the following proposition.
	
	\begin{proposition}\label{PropInfJs} 
		Let $N \geq 2$ and $s\in [0,N)$. Then
		\[\inf_{W^{1,N}_0(B_1)}{J_s}= - \sum_{k=1}^{N-1}\frac{1}{k} -\ln\left(\frac{N}{N-s}\right) - \frac{s}{N} \ln V_N .\]
		
		\noindent
		
		Moreover, the infimum is not attained.			
	\end{proposition}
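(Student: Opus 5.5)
The plan is to get the lower bound, the non-attainment, and the upper bound (via a minimizing sequence) by transferring everything to the non-singular functional $J=J_0$ through the change of variables $D_{N,s}$ of Lemma \ref{LamLuLemma}. Set $K_s:=\ln\left(\frac{N}{N-s}\right)+\frac{s}{N}\ln V_N$. For the lower bound, Proposition \ref{MTsingballs} states precisely that $J_s(u)>-\sum_{k=1}^{N-1}\frac1k-K_s$ for every $u\in W^{1,N}_0(B_1)$. This gives the inequality $\ge$ for the infimum. It also gives non-attainment, since the inequality is strict for every $u$: if the infimum equals this constant, no $u$ can realize it.

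For the reverse inequality, I will work with radial functions, for which the gradient inequality in part \eqref{Prop4} of Lemma \ref{LamLuLemma} becomes an equality. Take $v\in W^{1,N}_0(B_1)$ radial and set $u:=D_{N,s}^{-1}v$, so that $D_{N,s}u=v$ and $u$ is radial. Remark \ref{DNuSob}, applied in the reverse direction (or a direct radial computation), shows that $u\in W^{1,N}_0(B_1)$, and
$$\int_{B_1}|\nabla u|^N\,dy=\int_{B_1}|\nabla D_{N,s}u|^N\,dx .$$
Repeating the computation in the proof of Proposition \ref{MTsingballs}, with equality at every step, yields
$$J_s(u)=\left(\tfrac{N}{N-s}\right)^{N-1}\tfrac{1}{\widetilde{\omega_N}}\int_{B_1}|\nabla v|^N\,dx-\ln\left(\tfrac{1}{|B_1|}\int_{B_1}e^{\alpha v}\,dx\right)-K_s,$$
where $\alpha=\left(\frac{N}{N-s}\right)^{1/N'}$. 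Since $\alpha^N=\left(\frac{N}{N-s}\right)^{N-1}$, the right-hand side equals $J_0(\alpha v)-K_s$. Consequently,
$$\inf J_s\le \inf_{w\ \mathrm{radial}}J_0(w)-K_s .$$

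The remaining step, which I expect to be the main obstacle, is to show that the infimum in \eqref{infNOsing} can be approached by radial functions. First I would invoke Schwarz symmetrization. By the P\'olya--Szeg\H{o} inequality, replacing $w$ by its symmetric decreasing rearrangement $w^*$ does not increase $\int|\nabla w|^N$. Since $w^*$ is equimeasurable with $w$, the integral $\int e^{w}$ is unchanged, provided one first reduces to $w\ge0$. That reduction is harmless because $|w|$ has the same gradient norm and $e^{|w|}\ge e^{w}$, so $J_0(|w|)\le J_0(w)$. Hence $\inf_{\mathrm{radial}}J_0=\inf J_0=-\sum_{k=1}^{N-1}\frac1k$. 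Alternatively, the concentrating radial Carleson--Chang/Moser sequence from \cite{CarCha-1986,BorCinMan-2025} gives this directly. Combining with the previous paragraph gives the upper bound, and hence the stated value of $\inf J_s$.
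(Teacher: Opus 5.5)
Your argument is correct, but it divides the work differently from the paper.

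The paper first symmetrizes $J_s$ itself. Using P\'olya--Szeg\H{o} and the Hardy--Littlewood inequality with the weight $|x|^{-s}$, it shows that $\inf J_s$ over $W^{1,N}_0(B_1)$ equals the infimum over the class $C^*$ of nonnegative radially decreasing functions. On $C^*$, the map $u\mapsto \left(\frac{N}{N-s}\right)^{1/N'}D_{N,s}u$ matches $J_s$ with $J$ exactly, up to your constant $K_s$. This gives both inequalities at once, so $\inf J_s=\inf J-K_s$ comes out as a structural identity. Proposition \ref{MTsingballs} is used only at the end, for non-attainment.

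You instead obtain the lower bound and non-attainment together from Proposition \ref{MTsingballs}. You then use the radial correspondence only through $D_{N,s}^{-1}$, for the upper bound. As a result, rearrangement is needed only for the unweighted $J_0$, where equimeasurability suffices, and you avoid the weighted Hardy--Littlewood step entirely.

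The trade-off is that your lower bound relies, through Proposition \ref{MTsingballs}, on the non-radial inequality $\int|\nabla D_{N,s}u|^N\le\int|\nabla u|^N$ of Lemma \ref{LamLuLemma}. The paper's two-sided argument needs only the radial equality case of that lemma.

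You are also right that a direct radial computation is needed to get $D_{N,s}^{-1}v\in W^{1,N}_0(B_1)$ with equal gradient norms. The lemma as stated presupposes $\nabla D_{N,s}^{-1}v\in L^N$, and the paper passes over this point.
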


	\begin{proof}
		
		First of all, let us show that 
		\begin{equation}\label{infequal}
			\inf_{W^{1,N}_0(B_1)}{J_s}= \inf_{C^*}{J_s},
		\end{equation}
		
		\noindent
		where
		\[C^*=\{v \in W^{1,N}_0(B_1): v \geq 0\text{ is radially symmetric decreasing}\}.\]
		
		\noindent
		In fact, given $u \in W^{1,N}_0(B_1)$ and denoting by $u^*$ the symmetric decreasing rearrangement of $u$, we recall (see \cite{Kes-2006} for details) that 
		
		\begin{itemize}
			\item[•] $ \lVert u^* \rVert_p =  \lVert u \rVert_p $ for any $1 \leq p < \infty$;
			
			\item[•] $ \left( e^{|u(x)|} \right)^* = e^{u^*(x)} \quad \text{for a.e. } x \in B_1;$
			
			\item[•] $ u^* \in W_0^{1,N}(B_1)$ and Poly\'a – Szeg\"o inequality holds: 
			$$ \displaystyle \int_{B_1}|\nabla u^*|^N \, dx  \leq \int_{B_1} |\nabla u|^N \, dx;  $$
			
			\item[•] by Hardy–Littlewood inequality it follows that 
			$$ \displaystyle \int_{B_1} e^{u(x)} \frac{1}{|x|^s} \, dx \leq \int_{B_1} \left( e^{|u(x)|} \right)^* \left( \frac{1}{|x|^s} \right)^* \, dx \leq \int_{B_1} e^{u^*(x)} \frac{1}{|x|^s} \, dx.$$ 
			
		\end{itemize}
		
		\noindent
		As a consequence, $J_s(u^*) \leq  J_s(u)$  for every $u\in W^{1,N}_0(B_1)$, whence \eqref{infequal} holds.
		
		\noindent
		Now, let us show the connection between $J_s$ and $J$ whenever $u\in C^*$.\\
		By applying  Remark  \ref{DNuSob} we infer that $D_{N,s}(u)\in W_0^{1,N}(B_1).$
		Taking also into account that  $D_{N,s}(u)$ is radially symmetric  whenever $u$ is radially symmetric, we deduce that $D_{N,s}(u) \in C^*$ whenever $u \in C^*$. Moreover, from Lemma \ref{LamLuLemma} and the fact that
		$$ \displaystyle \widetilde{\omega_{N,s}}=\left(\frac{N-s}{N}\right)^{N-1} \widetilde{\omega_{N}} =  \left[ \left(\frac{N-s}{N}\right)^{\frac{1}{N'}} \right]^N \widetilde{\omega_{N}},    $$
		
		\noindent
		we have 
		\begin{align*}
			\displaystyle 
			J_s(u)
			& = \frac{1}{\widetilde{\omega_{N,s}}}\int_{B_1}|\nabla u|^N \, dx -\ln \left(\frac{1}{|B_1|^{1-\frac{s}{N}}} \int_{B_1} e^u \frac{1}{|x|^s} \, dx \right) \\			
			& = \frac{1}{\widetilde{\omega_{N}}}\int_{B_1}\left|\nabla \left( \, \left(\frac{N}{N-s}\right)^{\frac{1}{N'}} D_{N,s}(u) \,  \right) \, \right|^N \, dx -\ln\left(\frac{N}{N-s}\right)  \nonumber \\
			& \quad  -\ln \left(\frac{1}{|B_1|} \int_{B_1} e^{\left(\frac{N}{N-s}\right)^{\frac{1}{N'}}D_{N,s}(u)} \,  dx \right)- \frac{s}{N} \ln  V_N  \nonumber\\
			&= J\left( \left(\frac{N}{N-s}\right)^{\frac{1}{N'}} D_{N,s}(u) \right)-\ln\left(\frac{N}{N-s}\right) - \frac{s}{N} \ln  V_N.\nonumber
		\end{align*}
		
		\noindent
		Thus
		\begin{equation}\label{inf1}
			\displaystyle \inf_{C^*} J_s \geq \inf_{C^*} J -\ln\left(\frac{N}{N-s}\right) - \frac{s}{N} \ln  V_N.
		\end{equation}
		
		\noindent
		Similarly, by using the map $D_{N,s}^{-1}$,  for any $v \in C^*$ we have 
		$$ \displaystyle J(v)= J_s\left( \left(\frac{N-s}{N}\right)^{\frac{1}{N'}} D_{N,s}^{-1}(v) \right) + \ln\left(\frac{N}{N-s}\right) + \frac{s}{N} \ln V_N ,$$
		
		\noindent
		hence
		\begin{equation}\label{inf2}
			\displaystyle \inf_{C^*} J_s \leq \inf_{C^*} J - \ln\left(\frac{N}{N-s}\right) - \frac{s}{N} \ln  V_N.
		\end{equation}
		
		\noindent
		Therefore, from \eqref{infNOsing}, \eqref{inf1} and \eqref{inf2}, we have
		\begin{align*}
			\displaystyle 
			\inf_{W_0^{1,N}(B_1)} J_s 
			& = \inf_{W_0^{1,N}(B_1)} J - \ln\left(\frac{N}{N-s}\right) - \frac{s}{N} \ln  V_N \\
			& = - {\sum_{k=1}^{N-1}} \frac{1}{k} - \ln\left(\frac{N}{N-s}\right) - \frac{s}{N} \ln  V_N.
		\end{align*} 
		
		\noindent
		Finally, by Proposition \ref{MTsingballs} we deduce that the infimum is not attained. 
		
	\end{proof}
	
	\begin{remark}\label{invariance}
		
		Denoting by $B_r$ the ball of $\mathbb{R}^N$ centered in the origin and with radius $r>0$,	by rescaling we get
		\begin{align*}
			\displaystyle 
			\inf_{ u \in W_0^{1,N}(B_r)} \left\lbrace \frac{1}{ \widetilde{\omega_{N,s}}}\int_{B_r}|\nabla u|^N \, dx -\ln \left(\frac{1}{|B_r|^{1 - \frac{s}{N}}} \int_{B_r}  \frac{ e^u}{|x|^s} \, dx \right) \right\rbrace 
			& = \inf_{ u \in W_0^{1,N}(B_1)} J_s(u). \\
		\end{align*}
	\end{remark}

	\noindent
	Now, we show the equivalence result. Since the reasoning is analogous to \cite{BorCinMan-2025}, we briefly outline the key steps of the proof.
	
	\begin{proof}[\textbf{Proof of Theorem \ref{Equivalence}}]
		\noindent First, the inequality $``\geq"$ is deduced.
		\par 
		\noindent			
		Let $u\in W_{\mu_{N,s}}(\mathbb{R}^N)$. By Theorem \ref{densitystatementANDextension}, we can assume $u\in C^{\infty}_0(\mathbb{R}^N)$ and extend the result by density. We consider $\eta \in C^{\infty}_0(\mathbb{R}^N)$ such that $0 \leq \eta \leq 1$, $ \eta \equiv 1$ if $ \displaystyle |x| \leq \frac{1}{2},$ and $\eta \equiv 0 $ if $\displaystyle  |x| \geq 1$. Let us define, for $r>0$, $\displaystyle \eta_r(x):=\eta \left(\frac{x}{r}\right)$. Obviously, $\displaystyle \eta_r(x) \to 1$ as $r \to +\infty,$ for any $ x \in \mathbb{R}^N.$\\
		We will denote by $v_{N,s}(x):= \ln \mu_{N,s}(x)$ and by $o(1)$ a quantity that goes to zero as $r \to + \infty.$
		
		\par 
		\noindent Through the Lebesgue Convergence Theorem we deduce that 
		\begin{equation*}
			\displaystyle \ln\left(\int_{B_r}e^{u(x)\eta_r+v_{N,s}(x)}\,  dx \right)\rightarrow \ln \left(\int_{\mathbb{R}^N}e^{u(x)} \, d\mu_{N,s}\right).
		\end{equation*}
		
		\noindent
		Now, let us consider the function $w:=u\eta_r+g_{N,s}(x)-g_{N,s}(r)$,	where $g_{N,s}(r)$ denotes the function $g_{N,s}(x)$ evaluated at $|x|=r$. Since $g_{N,s}(x) \in W^{1,N}_{loc}(\mathbb{R}^N)$, it follows that $w\in W^{1,N}_0(B_r)$. Let us make the following computations:
		\begin{align*}
			\ln & \left(\int_{B_r}e^{u(x)\eta_r+v_{N,s}(x)} \, dx \right)  =  \ln\left(e^{g_{N,s}(r)}\int_{B_r} \frac{e^{w(x)} }{|x|^s} \, dx\right)  \\
			& =   \ln\left(\frac{N-s}{\omega_{N-1}}\frac{|B_r|^{1-\frac{s}{N}}}{(1+r^{\frac{N-s}{N-1}})^N} \frac{1}{|B_r|^{1-\frac{s}{N}}}\int_{B_r} \frac{e^{w(x)} }{|x|^s}\, dx\right) \nonumber	 \\
			&=	\ln\left(\frac{(N-s)V_N^{-\frac{s}{N}}}{N} \right) + \ln\left(\frac{1}{|B_r|^{1-\frac{s}{N}}} \int_{B_r}  \frac{e^{w(x)} }{|x|^s}\,  dx\right) + \ln \left( \frac{r^{N-s}}{(1+r^{\frac{N-s}{N-1}})^N} \right)	\nonumber \\
			&= \ln\left(\frac{1}{|B_r|^{1-\frac{s}{N}}} \int_{B_r}  \frac{e^{w(x)} }{|x|^s}\,  dx\right) - \ln \left( \frac{N}{N-s} \right) - \frac{s}{N} \ln V_N 
			- \frac{N-s}{N-1} \ln(r)+o(1). \nonumber
		\end{align*}	
		Since $w\in W^{1,N}_0(B_r)$, by Remark \ref{invariance} and Proposition \ref{PropInfJs}  we infer that 
		\begin{align*}
			\displaystyle 
			\ln\left(\frac{1}{|B_r|^{1-\frac{s}{N}}}\int_{B_r} \frac{e^{w(x)} }{|x|^s} \, dx \right)
			& < \frac{1}{\widetilde{\omega_{N,s}}}\int_{B_r}|\nabla w|^N \, dx -\inf_{W^{1,N}_0(B_1)} J_s.
		\end{align*}			
		
		\noindent
		Regarding $ \displaystyle \frac{1}{\widetilde{\omega_{N,s}}}\int_{B_r}|\nabla w|^N \, dx $, due to the definition of $w$ and $H_{N,s}$, we get
		\begin{align*}
			\displaystyle
			\frac{1}{\widetilde{\omega_{N,s}}} \int_{B_r}|\nabla w|^N \, dx =&  \frac{1}{\widetilde{\omega_{N,s}}} \left(\int_{B_r}H_{N,s}(u\eta_r) \, dx + \int_{B_r}|\nabla g_{N,s}(x)|^N \, dx\right.\\ &\left.+ N \int_{B_r}|\nabla g_{N,s}(x)|^{N-2}\nabla g_{N,s}(x) \cdot \nabla(u\eta_r)\, dx\right).
		\end{align*}
		
		\noindent
		Moreover, by Green's formula and Remark \ref{Nlaplaciano},  
		\begin{align*}
			\displaystyle
			\frac{N}{\widetilde{\omega_{N,s}}} \int_{B_r}|\nabla g_{N,s}(x)|^{N-2}\nabla g_{N,s}(x)\cdot \nabla(u\eta_r)\, dx = \int_{B_r} u \eta_r  \, d \mu_{N,s} .
		\end{align*}
		
		\noindent
		By Remark \ref{RemarkIntegral}, we have 
		\begin{equation}\label{calcolointegralevns}
			\displaystyle 
			\frac{1}{\widetilde{\omega_{N,s}}}\int_{B_r}|\nabla g_{N,s}(x) \, |^N \, dx =\frac{N-s}{N-1} \ln r - \sum_{k=1}^{N-1} \frac{1}{k} + o(1).
		\end{equation}
		
		\noindent
		Summing up, we have shown that 
		\begin{align*}
			\displaystyle  
			\ln\left(\int_{B_r}e^{u(x)\eta_r+v_{N,s}(x)} \, dx \right) 	< & \, \frac{1}{\widetilde{\omega_{N,s}}} \int_{B_r}H_{N,s}(u\eta_r) \, dx  + \int_{B_r} u \eta_r  \, d \mu_{N,s}\nonumber \\
			& \, - \inf_{W^{1,N}_0(B_1)} J_s - \ln \left( \frac{N}{N-s} \right) - \frac{s}{N} \ln V_N - \sum_{k=1}^{N-1} \frac{1}{k} + o(1). \nonumber
		\end{align*}
		
		\noindent
		Finally, since $\eta_r  \equiv 0$ for $|x|>r$, the definition of $I_s$ gives
		\begin{equation}\label{contoprimadellimite}
			\displaystyle \inf_{W^{1,N}_0(B_1)} J_s +\sum_{k=1}^{N-1}\frac{1}{k} +\ln\left(\frac{N}{N-s}\right) + \frac{s}{N} \ln V_N <  I_s(u\eta_r)+o(1).
		\end{equation}
		
		\noindent
		Now, we pass to the limit as $r \to + \infty$.  Since $u \in C_0^{\infty}(\mathbb{R}^N)$ and $\eta_r(x) \to 1$ for almost every $x \in \mathbb{R}^N$, by the Dominated Convergence Theorem we get 
		$$ \dis \ln\left(\int_{B_r}e^{u(x)\eta_r+v_{N,s}(x)} \, dx \right) \to \ln\left(\intrn e^{u(x)} \ d \mu_{N,s}  \right) \qquad \text{and} \qquad \int_{B_r} u \eta_r  \, d \mu_{N,s} \to \intrn u  \, d \mu_{N,s}.$$
		Moreover, since $ \displaystyle H_{N,s}(u(x) \eta_r(x))  \to H_{N,s}(u(x))$ for almost every $x \in \mathbb{R}^N$, and by Lemma \ref{Lemma 2.1} and Remark \ref{proprietagradvnslnxs}, we infer the existence of a function $v \in  L^1(\mathbb{R}^N, dx)$ such that, for any $ x \in \mathbb{R}^N$ and for any $r>1$, we have $ H_{N,s}(u(x) \eta_r(x)) \leq v(x)$, we can use again the Dominated Convergence Theorem  to have that
		$$ \dis  \frac{1}{\widetilde{\omega_{N,s}}} \int_{B_r}H_{N,s}(u\eta_r) \, dx \to  \frac{1}{\widetilde{\omega_{N,s}}} \intrn H_{N,s}(u) \, dx.$$
		Then, we conclude that 
		$$ \displaystyle I_s(u\eta_r) \to I_s(u) \qquad \text{as } r \to \infty,$$
		which, together with \eqref{contoprimadellimite}, yields
		\[	\inf_{W^{1,N}_0(B_1)} J_s + \sum_{k=1}^{N-1}\frac{1}{k} +\ln\left(\frac{N}{N-s}\right) + \frac{s}{N} \ln V_N \leq I_s(u). \]
		
		\noindent
		By Theorem \ref{densitystatementANDextension}, the above inequality  holds for any $u \in W_{\mu_{N,s}}(\mathbb{R}^N)$, hence
		$$
		\inf_{W^{1,N}_0(B_1)}  J_s + \sum_{k=1}^{N-1}\frac{1}{k} +\ln\left(\frac{N}{N-s}\right) + \frac{s}{N} \ln V_N  \leq \inf_{W_{\mu_{N,s}}(\mathbb{R}^N)} I_s.
		$$
		
		\par 
		\noindent
		Finally, we obtain the inequality $``\leq"$.\par 
		\noindent
		Let $u\in W^{1,N}_0(B_1)$, $r>0$ and let us define the function $u_r: \mathbb{R}^N \to \mathbb{R}$ as 
		\begin{equation*}
			u_r(x)=\label{problemaoriginal}
			\begin{cases}
				\displaystyle u\left(\frac{x}{r}\right)-g_{N,s}(x)+ g_{N,s}(r), &  \text{if $|x|\leq r$,}  \\
				0 & \text{if $|x|>r$.}
			\end{cases}
		\end{equation*}
		
		\noindent
		Note that $u_r \in W^{1,N}(\mathbb{R}^N)$. From Lemma \ref{Sobole-NewSpace}, $u_r \in W_{\mu_{N,s}}(\mathbb{R}^N)$, so
		\begin{equation}\label{Step2_1}
			\displaystyle					\inf_{W_{\mu_{N,s}}(\mathbb{R}^N)} I_s\leq I_s(u_r),
		\end{equation}
		\noindent
		where
		\begin{equation*}
			\displaystyle I_s(u_r) = \frac{1}{\widetilde{\omega_{N,s}}} \int_{\mathbb{R}^N} H_{N,s}(u_r) \, dx + \int_{\mathbb{R}^N} u_r \, d\mu_{N,s} - \ln \left( \int_{\mathbb{R}^N} e^{u_r} \, d\mu_{N,s} \right).
		\end{equation*}
		
		\noindent
		Let us compute each integral of the functional.
		\begin{align*}
			\displaystyle
			\ln\left(\int_{\mathbb{R}^N} e^{u_r} \, d\mu_{N,s}\right)=&\ln\left( r^s\mu_{N,s}(r)\int_{B_r} \frac{e^{u\left(\frac{x}{r}\right)}}{|x|^s} \, dx+\int_{\mathbb{R}^N\setminus B_r}\mu_{N,s}(x) \, dx \right)\\
			= & \ln \left(  \frac{1}{|B_1|^{1-\frac{s}{N}}} \int_{B_1}  \frac{e^{u}}{|x|^s} \, dx + \frac{N(N-1)V_N^{\frac{s}{N}}}{N-s} + o(1)  \right) \\
			& - \frac{N-s}{N-1} \ln r - \ln \left( \frac{N}{N-s} \right) - \frac{s}{N} \ln V_N + o(1),
		\end{align*}
		where in the last equality we used that
		\begin{align*}
			&\displaystyle
			r^s\mu_{N,s}(r) \int_{B_r} \frac{e^{u\left(\frac{x}{r}\right)}}{|x|^s} \, dx 
			= \frac{N-s}{NV_N^{\frac{s}{N}}} \frac{r^{N-s}}{\left( 1+r^{\frac{N-s}{N-1}}  \right)^N}\frac{1}{|B_1|^{1-\frac{s}{N}}} \int_{B_1}  \frac{e^{u(y)}}{|y|^s} \, dy,
		\end{align*}
		and
		\begin{align*}
			\displaystyle 
			\int_{\mathbb{R}^N \setminus B_r}\mu_{N,s}(x) \, dx 
			=  1 - \frac{r^{N-s}}{\left(1+r^{\frac{N-s}{N-1}}\right)^{N-1}}.
		\end{align*}
		
		\noindent
		Now, we compute  $ \displaystyle \frac{1}{\widetilde{\omega_{N,s}}}\int_{\mathbb{R}^N} H_{N,s}(u_r) \, dx$.
		\begin{align*}
			\displaystyle 
			\frac{1}{\widetilde{\omega_{N,s}}} \int_{\mathbb{R}^N} H_{N,s}(u_r) \, dx
			= & \frac{1}{\widetilde{\omega_{N,s}}} \int_{B_1} |\nabla u |^N \, dx  + \frac{N-1}{\widetilde{\omega_{N,s}}} \int_{B_r} |\nabla g_{N,s}|^N \, dx \\
			& - \frac{N}{\widetilde{\omega_{N,s}}} \int_{B_r} |\nabla g_{N,s}(x)|^{N-2} \nabla  g_{N,s}(x) \cdot \nabla_x u \left( \frac{x}{r} \right) \, dx\\
			= & \frac{1}{\widetilde{\omega_{N,s}}} \int_{B_1} |\nabla u |^N \, dx 
			- \int_{B_r} u\left(\frac{x}{r} \right) \, d \mu_{N,s}   
			\\&+ (N-s) \ln r - (N-1) \sum_{k=1}^{N-1} \frac{1}{k} + o(1).
		\end{align*}
		\noindent
		In the last inequality we have taken into account \eqref{calcolointegralevns}, along with the Green's formula and Remark \ref{Nlaplaciano}.\par 
		\noindent
		On the other hand, since 
		\begin{align*}
			\displaystyle
			- N \ln \left( 1 + r^{\frac{N-s}{N-1}}   \right) \int_{B_r}  \,  d\mu_{N,s} 
			& = - N \ln \left( 1 + r^{\frac{N-s}{N-1}}   \right) \frac{r^{N-s}}{ ( 1 + r^{\frac{N-s}{N-1}} )^{N-1}} \\
			& =  - \frac{N(N-s)}{N-1} \ln r + o(1)
		\end{align*}
		and \eqref{calcolointegralevns} is satisfied, we have
		$$ \int_{\mathbb{R}^N} u_r \, d\mu_{N,s} = \int_{B_r} u \left( \frac{x}{r} \right) \, d\mu_{N,s} + N \sum_{k=1}^{N-1} \frac{1}{k}  - \frac{N(N-s)}{N-1} \ln r + o(1).$$
		
		\noindent
		Summing up and taking into account \eqref{Step2_1}, we obtain 
		\begin{align*}
			\displaystyle
			\inf_{W_{\mu_{N,s}}(\mathbb{R}^N)} I_s
			\leq&\frac{1}{\widetilde{\omega_{N,s}}}\int_{B_1} |\nabla u|^N  \, dx  - \ln \left(  \frac{1}{|B_1|^{1-\frac{s}{N}}} \int_{B_1} \frac{e^{u}}{|x|^s} \, dx  + \frac{N(N-1)V_N^{\frac{s}{N}}}{N-s} + o(1) \right)\\ 
			& + \ln \left( \frac{N}{N-s} \right) + \frac{s}{N} \ln V_N + \sum_{k=1}^{N-1} \frac{1}{k} + o(1).
		\end{align*}
		Therefore, passing to the limit as $r \to \infty$, 
		\begin{align*}
			\displaystyle
			\inf_{W_{\mu_{N,s}}(\mathbb{R}^N)} I_s
			\leq&\frac{1}{\widetilde{\omega_{N,s}}}\int_{B_1} |\nabla u|^N  \, dx  - \ln \left(  \frac{1}{|B_1|^{1-\frac{s}{N}}} \int_{B_1}  \frac{e^{u}}{|x|^s} \, dx  + \frac{N(N-1)V_N^{\frac{s}{N}}}{N-s}  \right)\\ 
			& \qquad + \ln \left( \frac{N}{N-s} \right) + \frac{s}{N} \ln V_N + \sum_{k=1}^{N-1} \frac{1}{k} \\
			<&  J_s(u) + \ln \left( \frac{N}{N-s} \right) + \frac{s}{N} \ln V_N + \sum_{k=1}^{N-1} \frac{1}{k}.
		\end{align*}
		
		\noindent
		Since the inequality holds for any $u\in W_0^{1,N}(B_1)$, we get
		$$ \displaystyle \inf_{W_{\mu_{N,s}}(\mathbb{R}^N)} I_s \leq  \inf_{W_0^{1,N}(B_1)} J_s + \ln \left( \frac{N}{N-s} \right) + \frac{s}{N} \ln V_N + \sum_{k=1}^{N-1} \frac{1}{k}.$$
	\end{proof}
	
	\begin{remark}
		We emphasize that the proof of the previous theorem actually yields a stronger inequality than \eqref{Chang-Carleson-Singularity}. More precisely, for any $s \in [0,N)$ and any $u \in W^{1,N}_0(B_1)$, we have
		\begin{align*}
			\displaystyle
			\ln &\left(  \frac{1}{|B_1|^{1-\frac{s}{N}}} \int_{B_1}  \frac{e^{u}}{|x|^s} \, dx  + \frac{N(N-1)V_N^{\frac{s}{N}}}{N-s}  \right)\\
			&\leq\frac{1}{\widetilde{\omega_{N,s}}}\int_{B_1} |\nabla u|^N  \, dx  + \sum_{k=1}^{N-1} \frac{1}{k} + \ln \left( \frac{N}{N-s} \right) + \frac{s}{N} \ln V_N . 
		\end{align*}
		This inequality is particularly interesting for $s=0$, since it sharpens the classical Carleson-Chang inequality \eqref{InequalityCC}. 
		
	\end{remark}
	
	\appendix
	\section{Regularity of the functional}\label{app}
	
	In this section we prove the Fréchet differentiability of the functional $I_{s,\lambda}$ in $\wmns$.

	\begin{theorem}\label{Diff}
		The functional $I_{s,\lambda}$ is of class $C^1$ for any  $\lambda>0$. In particular, for any $u, v \in  W_{\mu_{N,s}}(\mathbb{R}^N)$ we have
		\begin{align*}
			\langle I'_{s,\lambda} (u), v \rangle=&\frac{N}{\widetilde{\omega_{N,s}}}\int_{\mathbb{R}^N} \left[|\nabla g_{N,s}+\nabla u|^{N-2}(\nabla g_{N,s}+\nabla u)  - |\nabla g_{N,s}|^{N-2}\nabla g_{N,s} \right] \cdot \nabla v \, dx \\
			& + \lambda\int_{\mathbb{R}^N} v \, d\mu_{N,s}  -\lambda\frac{\dis \int_{\mathbb{R}^N}e^u v \, d\mu_{N,s}}{ \dis \int_{\mathbb{R}^N}e^u \, d\mu_{N,s}}.
		\end{align*}
	\end{theorem}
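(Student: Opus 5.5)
We split $I_{s,\lambda}=A+\lambda B-\lambda L$, where
$$A(u)=\frac{1}{\widetilde{\omega_{N,s}}}\int_{\mathbb{R}^N}H_{N,s}(u)\,dx,\qquad B(u)=\int_{\mathbb{R}^N}u\,d\mu_{N,s},\qquad L(u)=\ln\Bigl(\int_{\mathbb{R}^N}e^u\,d\mu_{N,s}\Bigr),$$
and prove that each piece is $C^1$ on $\wmns$. The term $B$ is a bounded linear functional, since $|B(v)|\le\|v\|_{\mu_{N,s}}$, so it is smooth with $B'(u)=B$.

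\textbf{The term $L$.} By Theorem \ref{densitystatementANDextension}, Lemma \ref{Lemma 2.1} and the argument of Proposition \ref{inclusionLq}, for every $u$ and every $k>0$ we have
$$\int_{\mathbb{R}^N} e^{k|u|}\,d\mu_{N,s}\le \exp\bigl(C(\|u\|_{\mu_{N,s}}^N+\|u\|_{\mu_{N,s}}^2+\|u\|_{\mu_{N,s}})\bigr),$$
using the scaling of $H_{N,s}$ bounded by $k^N|\nabla u|^N+k^2|\nabla u|^2|\nabla g_{N,s}|^{N-2}$. Set $E(u)=\int e^u\,d\mu_{N,s}$. The elementary bound $|e^{a+b}-e^a-e^ab|\le \tfrac12 b^2e^{|a|+|b|}$, combined with H\"older (three exponents) and the embedding into $L^q(d\mu_{N,s})$ of Proposition \ref{inclusionLq}, gives
$$\Bigl|E(u+v)-E(u)-\int e^u v\,d\mu_{N,s}\Bigr|=O(\|v\|_{\mu_{N,s}}^2).$$
Continuity of $u\mapsto e^u\in L^2(d\mu_{N,s})$ follows similarly from $|e^a-e^b|\le|a-b|e^{|a|+|b|}$. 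Hence $E$ is $C^1$, and since $E>0$, $L=\ln E$ is $C^1$ with the stated derivative.

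\textbf{The term $A$: existence of the derivative.} Write $X=\nabla g_{N,s}$ and $\Phi(Y)=|Y|^{N-2}Y$, so that
$$\partial_Y R_N(X,Y)=N\bigl(\Phi(X+Y)-\Phi(X)\bigr).$$
We need the pointwise estimate
\begin{equation*}
|\Phi(X+Y)-\Phi(X)|\le C_N\bigl(|Y|^{N-1}+|X|^{N-2}|Y|\bigr),
\end{equation*}
which is the vector analogue of Lemma \ref{Lemma 2.1}. It gives $|(\Phi(X+\nabla u)-\Phi(X))\cdot\nabla v|\le C(|\nabla u|^{N-1}|\nabla v|+|X|^{N-2}|\nabla u||\nabla v|)$. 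The first term is integrable by H\"older with exponents $N/(N-1)$ and $N$. The second is integrable by Cauchy--Schwarz with the weight $|X|^{N-2}$, that is, against the third component of the norm. So the formula for $\langle A'(u),v\rangle$ defines a bounded linear functional. To get Gateaux differentiability we use the mean value theorem in $\tau$: $\tau^{-1}(R_N(X,\nabla u+\tau\nabla v)-R_N(X,\nabla u))$ is dominated by the same bound with $|\nabla u|+|\nabla v|$ in place of $|\nabla u|$. Dominated convergence then applies.

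\textbf{The term $A$: continuity of the derivative.} This is the main obstacle, because of the mixed weighted structure. Take $u_n\to u$ in $\wmns$. Passing to a subsequence, $\nabla u_n\to\nabla u$ a.e., and there is a dominating function $U$ with
$$|\nabla u_n|^N+|X|^{N-2}|\nabla u_n|^2\le U\in L^1.$$
We split
$$|\Phi(X+\nabla u_n)-\Phi(X+\nabla u)|\le G_1+G_2,\qquad G_1\le C(|\nabla u_n|^{N-1}+|\nabla u|^{N-1}),\qquad G_2\le C|X|^{N-2}(|\nabla u_n|+|\nabla u|),$$
by taking the cases $|X|\le|\nabla u_n|+|\nabla u|$ and its complement pointwise. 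Testing against $\nabla v$ with $\|v\|_{\mu_{N,s}}\le1$, H\"older gives
$$\|A'(u_n)-A'(u)\|\le C\Bigl(\bigl\|\,|G_1|\,\bigr\|_{L^{N/(N-1)}}+\Bigl(\int |X|^{-(N-2)}G_2^2\Bigr)^{1/2}\Bigr).$$
Both integrands tend to $0$ a.e. and are dominated by $CU$ by generalized dominated convergence. The subsequence principle then yields convergence of the full sequence. Finally, Gateaux differentiability together with a continuous derivative gives Fr\'echet $C^1$.
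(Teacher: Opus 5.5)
Your proof is correct and follows the paper's overall scheme: the same splitting into the $H_{N,s}$ part, the linear part and the logarithmic part, G\^ateaux differentiability by differentiating under the integral with an explicit dominating function, exponential integrability from \eqref{MT-SingularityNewSpace} applied to $k|u|$, and then continuity of the derivative. A few steps differ.

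For the $H_{N,s}$ term you use the single bound $|\Phi(X+Y)-\Phi(X)|\le C_N(|X|^{N-2}+|Y|^{N-2})|Y|$ (the paper's \eqref{EstXY3}) directly with $X=\nabla g_{N,s}$. The paper instead splits $\Phi(X+Y)-\Phi(X)=(|X+Y|^{N-2}-|X|^{N-2})X+|X+Y|^{N-2}Y$. That produces the mixed term $|\nabla g_{N,s}||\nabla u|^{N-2}|\nabla v|$, which it then controls through Lemma \ref{LemmaAppIntegrability}. Your route makes that lemma unnecessary.

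For the continuity of $A'$ the paper is more economical. It applies the same estimate recentred at $X=\nabla g_{N,s}+\nabla u$ with $Y=\nabla(u_n-u)$. Cauchy--Schwarz against the weight $|\nabla g_{N,s}|^{N-2}$ and H\"older then give directly
$$\|P'_{N,s}(u_n)-P'_{N,s}(u)\|\le C_N\big((1+\|\nabla u\|_N^{N-2})\|u_n-u\|_{\mu_{N,s}}+\|u_n-u\|_{\mu_{N,s}}^{N-1}\big).$$
This is an explicit, locally H\"older modulus of continuity. It needs no a.e.\ dominated subsequence, no pointwise case split on $|X|\le|\nabla u_n|+|\nabla u|$, and no subsequence principle. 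So the mixed weighted structure is not really the obstacle you describe; your dominated-convergence argument is valid but purely qualitative.

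For the exponential term you prove Fr\'echet differentiability directly through the quadratic remainder $O(\|v\|_{\mu_{N,s}}^2)$. The paper proves G\^ateaux differentiability and then continuity of $E'_{N,s}$ via a three-exponent H\"older bound. Your version has the small advantage of making explicit the domination needed to differentiate under the integral sign, which the paper leaves implicit.
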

	
	We start by stating some elementary lemmas.
	
	\begin{lemma}\label{EstGatoux} For $N\ge 3$, and for any $X,Y \in \R^N$, we have that 
		\begin{equation}\label{EstXY}
			||X+Y|^{N-2} - |X|^{N-2}|\le (N-2) (|X|+ |Y|)^{N-3} |Y|.
		\end{equation}
		In particular, there exists $C_N$ depending only on $N$ such that 
		\begin{equation}\label{EstXY2}
			\left||X+Y|^{N-2} - |X|^{N-2}\right|\le C_N \left( |X|^{N-3}|Y| + |Y|^{N-2}\right).
		\end{equation}
		and  
		\begin{equation}\label{EstXY3}
			\left||X+Y|^{N-2}(X+Y) - |X|^{N-2}X\right|\le C_N \left(|X|^{N-2} +|Y|^{N-2}\right)|Y|.
		\end{equation}
		Moreover, for any $X,Y,Z \in \R^N$ we have
		\begin{equation}\label{EstXYZ}
			\left||X+Y+Z|^{N-2} - |X|^{N-2}\right|\le C_N \left( |X|^{N-3}|Y| + |X|^{N-3}|Z| + |Y|^{N-2}+ |Z|^{N-2}\right).
		\end{equation}

	\end{lemma}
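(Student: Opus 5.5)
The plan is to derive \eqref{EstXY} from a one-variable mean value argument, and then obtain the other three estimates from it by elementary manipulations.

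\emph{Proof of \eqref{EstXY}.} Set $a:=|X|$ and $b:=|X+Y|$. By the triangle inequality, $|b-a|\le |Y|$ and $\max\{a,b\}\le |X|+|Y|$. Since $N\ge 3$, the function $\tau\mapsto \tau^{N-2}$ is $C^1$ on $[0,\infty)$ with derivative $(N-2)\tau^{N-3}$, which is nondecreasing. The mean value theorem gives a point $\xi$ between $a$ and $b$ with
$$
|b^{N-2}-a^{N-2}| = (N-2)\,\xi^{N-3}\,|b-a| \le (N-2)(|X|+|Y|)^{N-3}|Y|.
$$
When $N=3$ the factor $\xi^{0}$ equals $1$, so no care is needed there.

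\emph{Proof of \eqref{EstXY2}.} Use the elementary bound $(|X|+|Y|)^{N-3}\le 2^{N-3}\big(|X|^{N-3}+|Y|^{N-3}\big)$, which holds because $N-3\ge 0$; for $N=3$ both sides equal $1$. Multiplying by $|Y|$ turns \eqref{EstXY} into \eqref{EstXY2}.

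\emph{Proof of \eqref{EstXY3}.} Split
$$
|X+Y|^{N-2}(X+Y)-|X|^{N-2}X=\big(|X+Y|^{N-2}-|X|^{N-2}\big)X+|X+Y|^{N-2}Y .
$$
Apply \eqref{EstXY2} to the first term; this gives a bound $C_N\big(|X|^{N-2}|Y|+|X||Y|^{N-2}\big)$. The second term is at most $2^{N-3}\big(|X|^{N-2}+|Y|^{N-2}\big)|Y|$. The only mixed piece is $|X||Y|^{N-2}$. Young's inequality with exponents $N-2$ and $\frac{N-2}{N-3}$ (trivial when $N=3$) gives
$$
|X||Y|^{N-3}\le |X|^{N-2}+|Y|^{N-2},
$$
and hence $|X||Y|^{N-2}\le \big(|X|^{N-2}+|Y|^{N-2}\big)|Y|$. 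This yields \eqref{EstXY3}.

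\emph{Proof of \eqref{EstXYZ}.} Apply \eqref{EstXY2} with $Y$ replaced by $Y+Z$. Then use $|Y+Z|\le |Y|+|Z|$ and $(|Y|+|Z|)^{N-2}\le 2^{N-3}\big(|Y|^{N-2}+|Z|^{N-2}\big)$, which is valid since $N-2\ge 1$.

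The main obstacle is keeping the exponents nonnegative. The monotonicity of $\tau^{N-3}$ is used in the mean value step, and the convexity-type inequalities are used in the later steps; both require $N\ge 3$, which is why the lemma excludes $N=2$. Apart from this, the arguments are routine.
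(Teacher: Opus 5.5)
Your proof is correct and has the same overall structure as the paper's: it uses the same splitting $(|X+Y|^{N-2}-|X|^{N-2})X+|X+Y|^{N-2}Y$ for \eqref{EstXY3} and the same substitution of $Y+Z$ for $Y$ in \eqref{EstXY2} to get \eqref{EstXYZ}.

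The main difference is in \eqref{EstXY}. The paper applies the mean value theorem to the vector function $Y\mapsto |X+Y|^{N-2}$ on $\mathbb{R}^N$, which is $C^1$ only for $N\ge 4$, so it has to handle $N=3$ separately via the Lipschitz continuity of the norm. You apply the mean value theorem to the scalar function $\tau\mapsto\tau^{N-2}$ between $|X|$ and $|X+Y|$, together with the reverse triangle inequality. That function is $C^1$ on $[0,\infty)$ for every $N\ge 3$, so your argument covers all cases uniformly.

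In \eqref{EstXY3} you also take a slightly different route. You use \eqref{EstXY2} and then a Young inequality to absorb the mixed term $|X||Y|^{N-2}$. The paper inserts \eqref{EstXY} directly and uses $|X|\le |X|+|Y|$ to reach $(N-1)(|X|+|Y|)^{N-2}|Y|$, which avoids Young. Both are valid.

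One trivial slip: for $N=3$ the right-hand side of $(|X|+|Y|)^{N-3}\le 2^{N-3}\big(|X|^{N-3}+|Y|^{N-3}\big)$ equals $2$, not $1$. The inequality still holds, so nothing else changes.
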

	\begin{proof}
		When $N=3$, the three inequalities trivially follow from the Lipschitzianity of the Euclidean norm in $\R^N$. If $N\ge 4$, \eqref{EstXY} is obtained by applying the mean value theorem to the $C^1$ function $H_{X}(Y)= |X+Y|^{N-2}$, observing that if $\xi_Y \in [0,Y]$ then
		$$
		|\nabla H_X(\xi_Y)|\le (N-2) (|X|+|Y|)^{N-3}. 
		$$ 
		To prove \eqref{EstXY2}, we just use the convexity inequality 
		$$
		(|X|+|Y|)^{N-3}\le 2^{N-4} (|X|^{N-3}+|Y|^{N-3}).
		$$
		To prove \eqref{EstXY3}, we observe that 
		$$
		\begin{aligned}
			\left||X+Y|^{N-2}(X+Y) - |X|^{N-2}X\right| \le \left||X+Y|^{N-2} -|X|^{N-2}\right||X|+ |X+Y|^{N-2}|Y|.
		\end{aligned}
		$$
		Hence, using \eqref{EstXY} to estimate the first term, we get
		$$
		\begin{aligned}
			\left||X+Y|^{N-2}(X+Y) - |X|^{N-2}X\right| &\le (N-2)(|X|+|Y|)^{N-3}|X||Y| + (|X| +|Y|)^{N-2}|Y|\\ & \le (N-1)(|X|+|Y|)^{N-2}|Y| \\
			& \le (N-1)2^{N-3}(|X|^{N-2}+|Y|^{N-2})|Y|.
		\end{aligned}
		$$
		Finally \eqref{EstXYZ} is obtained by \eqref{EstXY2} replacing $Y$ with $Y+Z$ and estimating 
		$$
		(|Y|+|Z|)^{N-2}\le 2^{N-3} (|Y|^{N-2}+|Z|^{N-2}).
		$$
	\end{proof}

	\begin{lemma}\label{LemmaAppIntegrability}
		Let us assume $N >2$ and $s \in [0,N)$. Then the following estimates are satisfied:
		\begin{enumerate}
			\item	For any $u,v\in \wmns$,we have $	|\nabla g_{N,s}||\nabla u|^{N-2}|\nabla v| \in L^1(\R^N,dx)$ and 
			$$
			\int_{\R^N} |\nabla g_{N,s}||\nabla u|^{N-2}|\nabla v|\, dx \le  \|u\|_{\mu_{N,s}}^{N-2} \|v\|_{\mu_{N,s}}.
			$$ 
			\item	For any $v\in \wmns$,we have $	|\nabla g_{N,s}||\nabla v|^{N-1} \in L^1(\R^N,dx)$ and 
			$$
			\int_{\R^N} |\nabla g_{N,s}||\nabla v|^{N-1}\, dx \le  \|v\|_{\mu_{N,s}}^{N-1}.
			$$ 
		\end{enumerate}
	\end{lemma}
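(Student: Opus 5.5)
Both estimates follow from a single application of the generalized H\"older inequality, with exponents chosen so that each factor is controlled by one of the two gradient pieces of the norm $\|\cdot\|_{\mu_{N,s}}$. The point to keep in mind is that the norm only controls $\|\nabla u\|_N$ and $\left(\int |\nabla u|^2|\nabla g_{N,s}|^{N-2}\right)^{1/2}$. There is no control of $\|\nabla g_{N,s}\|_N$, which diverges logarithmically at infinity by Remark \ref{proprietagradvnslnxs}. So the factor $|\nabla g_{N,s}|$ must always be absorbed into a weighted term of the form $|\nabla w|\,|\nabla g_{N,s}|^{\frac{N-2}{2}}$, never estimated on its own.

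For item (2), I write
$$
|\nabla g_{N,s}||\nabla v|^{N-1} = \bigl(|\nabla g_{N,s}|^{\frac{N-2}{2}}|\nabla v|\bigr)^{\frac{2}{N-2}} \,|\nabla v|^{N-1-\frac{2}{N-2}} .
$$
The first factor is put in $L^{N-2}$, since its $(N-2)$-th power is $|\nabla g_{N,s}|^{N-2}|\nabla v|^2 \in L^1$. The exponent of the second factor is $N-1-\frac{2}{N-2}=\frac{N^2-3N}{N-2}$, and the conjugate exponent $\frac{N-2}{N-3}$ turns it into $|\nabla v|^{N}\in L^1$; this requires $N>3$. H\"older then gives
$$
\int |\nabla g_{N,s}||\nabla v|^{N-1}\le \Bigl(\int |\nabla g_{N,s}|^{N-2}|\nabla v|^2\Bigr)^{\frac1{N-2}}\|\nabla v\|_N^{\frac{N(N-3)}{N-2}}.
$$
Both factors are bounded by powers of $\|v\|_{\mu_{N,s}}$, namely $\|v\|_{\mu_{N,s}}^{\frac{2}{N-2}}\cdot \|v\|_{\mu_{N,s}}^{\frac{N(N-3)}{N-2}}$. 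The total power is $\frac{2+N^2-3N}{N-2}=N-1$, which is exactly the claimed bound. The case $N=3$ is simpler and is handled directly: Cauchy--Schwarz gives $\int |\nabla g||\nabla v|\,|\nabla v| \le \bigl(\int |\nabla g||\nabla v|^2\bigr)^{1/2}\bigl(\int|\nabla v|^2\bigr)^{1/2}$. But $\|\nabla v\|_2$ is not controlled by the norm, so this route fails for $N=3$ as written. I will therefore treat $N=3$ by a different splitting, pairing $|\nabla g|^{1/2}|\nabla v|\in L^2$ with $|\nabla g|^{1/2}|\nabla v|$ again. This gives exactly $\int |\nabla g||\nabla v|^2\le\|v\|^2_{\mu_{N,s}}$, i.e. item (2) for $N=3$ holds by the definition of the norm.

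Item (1) is handled the same way, taking one factor from $v$ and the rest from $u$. For $N\ge4$ I use the four-factor H\"older inequality with the splitting
$$
|\nabla g_{N,s}||\nabla u|^{N-2}|\nabla v| = \bigl(|\nabla g_{N,s}|^{\frac{N-2}{2}}|\nabla v|\bigr)^{a}\bigl(|\nabla g_{N,s}|^{\frac{N-2}{2}}|\nabla u|\bigr)^{b}|\nabla v|^{1-a}|\nabla u|^{N-2-b},
$$
where $(a+b)\frac{N-2}{2}=1$. The exponents are chosen so that the two weighted factors land in $L^{2/a}$ and $L^{2/b}$ and the two plain factors land in $L^{N/(1-a)}$ and $L^{N/(N-2-b)}$. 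The natural choice is $a=\frac{2}{N(N-2)}$ and $b=\frac{2(N-1)}{N(N-2)}$, which distributes the weight proportionally to the powers of $v$ and $u$. With this choice the conjugacy condition reads $\frac a2+\frac b2+\frac{1-a}{N}+\frac{N-2-b}{N}=1$, and I will verify it; if it fails I adjust $a,b$ along the line $a+b=\frac{2}{N-2}$. Homogeneity then gives the bound $\|u\|^{N-2}\|v\|$, since the total degree is $N-2$ in $u$ and $1$ in $v$. For $N=3$, Cauchy--Schwarz gives $\int |\nabla g|^{1/2}|\nabla u|\,|\nabla g|^{1/2}|\nabla v|\le \|u\|\|v\|$ directly.

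The main obstacle is bookkeeping rather than ideas: finding exponents that sum to one while keeping every factor inside a quantity controlled by the norm. I expect the constraint $a+b=\frac{2}{N-2}$ combined with the conjugacy condition to be automatically consistent by homogeneity, but this must be checked explicitly.
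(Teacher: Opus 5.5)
Your proposal is correct and is essentially the paper's argument: for item (1) the paper uses the same four-factor H\"older splitting at a different point of your line $a+b=\frac{2}{N-2}$, namely $a=\frac{2}{N}$ and $b=\frac{4}{N(N-2)}$, and it treats $N=3$ by Cauchy--Schwarz exactly as you do; for item (2) the paper simply takes $u=v$ in item (1) instead of your separate two-factor H\"older bound. The check you deferred does go through: the conjugacy condition $\frac{a+b}{2}+\frac{N-1-(a+b)}{N}=1$ is equivalent to $a+b=\frac{2}{N-2}$, and your choice keeps $1-a>0$ and $N-2-b>0$ for every $N\ge 4$.
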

	\begin{proof}
		The first statement follows by H\"older inequality with exponents $2$ and $2$ if $N =3$. Otherwise, it follows from the extended H\"older inequality with exponents $N$, $\frac{N(N-2)}{2}$, $\frac{N^2}{N-2}$ and $\frac{N}{N-2-\frac{4}{N(N-2)}}$, since
		$$
		\begin{aligned}
			& \left| \nabla g_{N,s} \right|  \left| \nabla v   \right| \left| \nabla u   \right|^{N-2}\\
			&= \left| \nabla g_{N,s} \right|^{\frac{N-2}{N}} \left| \nabla v \right|^{\frac{2}{N}} \left| \nabla g_{N,s} \right|^{\frac{2}{N}} \left| \nabla u \right|^{\frac{4}{N(N-2)}} \left| \nabla v \right|^{\frac{N-2}{N}} \left| \nabla u \right|^{N-2-\frac{4}{N(N-2)}}
		\end{aligned}
		$$
		and
		\begin{equation}
			\begin{aligned}
				\dis 
				\int_{\mathbb{R}^N}   & \left| \nabla g_{N,s} \right| \left| \nabla v \right|  \left| \nabla u   \right|^{N-2} \, dx  \\
				&\leq \left( \int_{\mathbb{R}^N}  |\nabla g_{N,s}|^{N-2}   \left| \nabla v \right|^2 \, dx       \right)^{\frac{1}{N}} \left( \int_{\mathbb{R}^N}  |\nabla g_{N,s}|^{N-2}   \left| \nabla u \right|^2 \, dx       \right)^{\frac{2}{N(N-2)}} \\&\quad \times \lVert \nabla v \rVert_N^{\frac{N-2}{N}}  \lVert \nabla  u \rVert_N^{N-2-\frac{4}{N(N-2)}}\le   \|u\|_{\mu_{N,s}}^{N-2} \|v\|_{\mu_{N,s}}.
			\end{aligned}
		\end{equation}
		For the second statement, it is sufficient to take $u=v$.
	\end{proof}

	We now present the proof of Theorem \ref{Diff}, which will be divided into two propositions.

	\begin{proposition}\label{DiffHNs}
		The functional $P_{N,s} : W_{\mu_{N,s}}(\mathbb{R}^N) \to \mathbb{R}$ defined as 
		$$ \displaystyle P_{N,s}(u):=  \int_{\mathbb{R}^N} H_{N,s}(u) \, dx$$
		
		\noindent
		is of class $C^1$ in $\wmns$, and  for any $u, v \in  W_{\mu_{N,s}}(\mathbb{R}^N)$ we have
		\begin{align*}
			\langle P'_{N,s}(u), v \rangle =N\int_{\mathbb{R}^N} \left[ \, |\nabla g_{N,s}+\nabla u|^{N-2}(\nabla g_{N,s}+\nabla u)  - |\nabla g_{N,s}|^{N-2}\nabla g_{N,s} \right] \cdot \nabla v \, dx. 
		\end{align*}
	\end{proposition}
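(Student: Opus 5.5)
We will prove Proposition \ref{DiffHNs} in two stages: first Gâteaux differentiability with the stated formula, then continuity of the derivative $u\mapsto P'_{N,s}(u)$ from $\wmns$ into its dual. Together these give Fréchet differentiability and the $C^1$ property. Write $X=\nabla g_{N,s}$ and $A(Z):=|Z|^{N-2}Z$. With this notation $H_{N,s}(u)=|X+\nabla u|^N-|X|^N-N\,A(X)\cdot\nabla u$, and $\nabla_Z |Z|^N=N A(Z)$. For $N=2$ everything is linear/quadratic and explicit: $H_{2,s}(u)=|\nabla u|^2$, and $P_{2,s}$ is a continuous quadratic form on $\wmns$. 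So we concentrate on $N\ge3$.

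\textbf{Gâteaux derivative.} Fix $u,v\in\wmns$ and $t\in(0,1)$. Pointwise we have
$$\frac{H_{N,s}(u+tv)-H_{N,s}(u)}{t}=N\int_0^1\bigl[A(X+\nabla u+\tau t\nabla v)-A(X)\bigr]\cdot\nabla v\,d\tau ,$$
which converges a.e.\ to $N[A(X+\nabla u)-A(X)]\cdot\nabla v$ as $t\to0$. For the dominating function we apply \eqref{EstXY3} with $Y=\nabla u+\tau t\nabla v$, which bounds the integrand by
$$C_N\bigl(|X|^{N-2}+|\nabla u|^{N-2}+|\nabla v|^{N-2}\bigr)\bigl(|\nabla u|+|\nabla v|\bigr)|\nabla v| .$$
Each resulting term is integrable. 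The terms $|X|^{N-2}|\nabla u||\nabla v|$ and $|X|^{N-2}|\nabla v|^2$ are handled by Cauchy--Schwarz with the weight $|X|^{N-2}$, using the third component of the norm. The terms $|\nabla u|^{N-2}|\nabla u||\nabla v|$ and the others with no $X$ factor are handled by Hölder in $L^N$. The linear dependence on $v$ of the limit, together with the same bounds, shows that it defines a bounded functional on $\wmns$. Dominated convergence then gives the stated formula for $\langle P'_{N,s}(u),v\rangle$.

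\textbf{Continuity of $P'_{N,s}$.} Let $u_n\to u$ in $\wmns$ and set $Y=\nabla u$, $Z_n=\nabla u_n-\nabla u$. We must show that
$$\sup_{\|v\|_{\mu_{N,s}}\le1}\int\bigl|A(X+Y+Z_n)-A(X+Y)\bigr||\nabla v|\,dx\to0 .$$
We split
$$A(X+Y+Z_n)-A(X+Y)=\bigl(|X+Y+Z_n|^{N-2}-|X+Y|^{N-2}\bigr)(X+Y)+|X+Y+Z_n|^{N-2}Z_n .$$
The key is to expand the first factor around $X$ rather than around $X+Y$, via \eqref{EstXYZ}. This is needed so that every appearance of the unbounded weight $|X|$ carries at most the power $N-2$ paired against two gradient factors, or the power $N-3$ paired against gradient factors whose total power is $3$.

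The resulting terms are products such as
$$|X|^{N-2}|Z_n||\nabla v|,\quad |X|^{N-3}|Y||Z_n||\nabla v|,\quad |X||Z_n|^{N-2}|\nabla v|,\quad |Y|^{N-2}|Z_n||\nabla v|, \ \text{etc.}$$
Each is estimated by a Hölder splitting in the spirit of Lemma \ref{LemmaAppIntegrability}: factor $|X|^{a}$ as a product of pieces $|X|^{(N-2)\theta_i}$, each attached to a gradient squared in the weighted $L^2$ norm, and put the remaining powers in $L^N$. For instance, $|X|^{N-3}|Y||Z_n||\nabla v|$ can be treated using the weighted $L^2$ norms of $Y,Z_n,\nabla v$ together with $L^N$ norms, with exponents chosen so that they sum to one. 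Every term contains at least one factor of $|Z_n|$. Each estimate therefore yields $C\,\|u_n-u\|_{\mu_{N,s}}^{\alpha}$ for some $\alpha>0$, times powers of $\|u\|_{\mu_{N,s}}$, $\|u_n\|_{\mu_{N,s}}$ and $\|v\|\le1$, which tends to zero uniformly in $v$.

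\textbf{Main obstacle.} The main difficulty is the bookkeeping in this last step: checking that every mixed term $|X|^{a}|Y|^{b}|Z_n|^{c}|\nabla v|$ with $a+b+c=N-1$ and $c\ge1$ admits admissible Hölder exponents. The limiting cases are $a=N-2$ and $a=N-3$. If a term with larger $a$ appeared, it would not be controlled by the norm, since only $|X|^{N-2}|\nabla w|^2$ is integrable. This is exactly why the expansion must be centered at $X$ via \eqref{EstXYZ}: that expansion guarantees $a\le N-2$. An alternative, if the exponent algebra becomes messy, is to argue by subsequences. Pass to a subsequence along which $\nabla u_n\to\nabla u$ a.e.\ with majorants $|\nabla u_n|^N\le U$ and $|X|^{N-2}|\nabla u_n|^2\le U$ for some $U\in L^1$. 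Then the estimate of Lemma \ref{LemmaAppIntegrability} makes the functions $|A(X+\nabla u_n)-A(X+\nabla u)|$ dominated, in the dual-norm sense, in a way that allows Vitali/dominated convergence to conclude.
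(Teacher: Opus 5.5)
Your overall strategy is the paper's: Gâteaux differentiability by dominated convergence, then continuity of $u\mapsto P'_{N,s}(u)$ in the dual norm via a pointwise inequality and H\"older. Your Gâteaux step is correct. Applying \eqref{EstXY3} directly with $Y=\nabla u+\tau t\nabla v$ is even slightly cleaner than the paper's splitting \eqref{contoLagrangePiuVariabili}, since no term like $|X||\nabla u|^{N-2}|\nabla v|$ appears and Lemma \ref{LemmaAppIntegrability} is not needed.

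The continuity step, however, fails at the point you call the key. Suppose you bound $\bigl||X+Y+Z_n|^{N-2}-|X+Y|^{N-2}\bigr|$ by passing through $|X|^{N-2}$: \eqref{EstXYZ} for $|X+Y+Z_n|^{N-2}-|X|^{N-2}$ and \eqref{EstXY2} for $|X+Y|^{N-2}-|X|^{N-2}$. Then the bound contains $C_N\bigl(|X|^{N-3}|Y|+|Y|^{N-2}\bigr)$, with no factor $|Z_n|$. After multiplying by $|X+Y||\nabla v|$ you get terms such as $|X|^{N-2}|Y||\nabla v|$, which are integrable but do not tend to $0$. So your claim that every term carries a factor $|Z_n|$ is false for the expansion you prescribe.

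The terms you actually list come from the opposite choice. They arise from the mean value estimate \eqref{EstXY} with base point $X+Y$ and increment $Z_n$, followed by $(|X+Y|+|Z_n|)^{N-3}\le C\bigl(|X|^{N-3}+|Y|^{N-3}+|Z_n|^{N-3}\bigr)$. Your worry that centring at $X+Y$ could create weight exponents above $N-2$ is unfounded, because $|X+Y|^{N-2}\le C\bigl(|X|^{N-2}+|Y|^{N-2}\bigr)$.

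This is exactly what the paper does. Applying \eqref{EstXY3} at base $X+Y$ gives directly
\[
\bigl|A(X+Y+Z_n)-A(X+Y)\bigr||\nabla v|\le C_N\bigl(|X|^{N-2}+|Y|^{N-2}+|Z_n|^{N-2}\bigr)|Z_n||\nabla v| ,
\]
see \eqref{ineq1}. Three H\"older estimates then finish: weighted Cauchy--Schwarz for the first term and $L^N$-H\"older for the other two. This bounds the dual norm by powers of $\|u_n-u\|_{\mu_{N,s}}$, uniformly in $\|v\|_{\mu_{N,s}}\le 1$.

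Mixed terms such as $|X|^{N-3}|Y||Z_n||\nabla v|$ are absorbed by Young's inequality, so no exponent bookkeeping is needed. Your count is nevertheless right: for any term with $a\le N-2$ the H\"older exponents sum to one. So once you replace \eqref{EstXYZ} by \eqref{EstXY} or \eqref{EstXY3} centred at $X+Y$, your argument closes. Your subsequence/Vitali fallback is then unnecessary, and as stated it does not explain how uniformity in $v$ would be obtained.
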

	\begin{proof}
		We provide the proof for $N \geq 3$, since it is standard for  $N=2$.
		
		\noindent
		We first prove that $P_{N,s}$ is Gâteaux differentiable, that is for any $u,v \in  W_{\mu_{N,s}}(\mathbb{R}^N)$ we have
		\begin{equation}\label{DirDer}
			\begin{aligned}
				&\langle P'_{N,s}(u), v\rangle= 
				\left.\frac{d}{dt}
				P_{N,s}(u + tv) \right|_{t=0}\\
				& = N\int_{\mathbb{R}^N} \left[ \, |\nabla g_{N,s}+\nabla u|^{N-2}(\nabla g_{N,s}+\nabla u)  - |\nabla g_{N,s}|^{N-2}\nabla g_{N,s} \right] \cdot \nabla v \, dx
			\end{aligned} 
		\end{equation}
		and that $v\rightarrow \langle P'_{N,s}(u), v\rangle$ is linear and continuous.
		
		\noindent
		Since  $H_{N,s}(u)\in L^1(\R^N,dx)$ and the function $t \mapsto H_{N,s}(u+tv)$ is of class $C^1$ with 
		$$
		\frac{d}{dt} H_{N,s}(u+tv) =
		N \left[|\nabla g_{N,s}+\nabla u+t \nabla v|^{N-2}(\nabla g_{N,s}+\nabla u+t \nabla v)  - |\nabla g_{N,s}|^{N-2}\nabla g_{N,s} \right] \cdot \nabla v,
		$$ by differentiation under integral sign, it is sufficient to show that there exists $ \psi \in L^1(\R^N,dx)$ such that for any $t \in [-1,1]$:
		$$
		N \left| |\nabla g_{N,s}+\nabla u+t \nabla v|^{N-2}(\nabla g_{N,s}+\nabla u+t \nabla v)  - |\nabla g_{N,s}|^{N-2}\nabla g_{N,s} \right|| \nabla v| \le \psi.
		$$
		We first observe that
		\begin{equation}\label{contoLagrangePiuVariabili}
			\begin{aligned}
				&\left| |\nabla g_{N,s} +\nabla u+t \nabla v|^{N-2}(\nabla g_{N,s}+\nabla u+t \nabla v)  - |\nabla g_{N,s}|^{N-2}\nabla g_{N,s} \right|| \nabla v|  \\
				& \qquad \le \left| |\nabla g_{N,s}+\nabla u+t \nabla v|^{N-2} - |\nabla g_{N,s}|^{N-2} \right||\nabla g_{N,s} || \nabla v| \\ 
				&\qquad \quad+  |\nabla g_{N,s}+\nabla u+t \nabla v|^{N-2} (|\nabla u| + |\nabla v|)|\nabla v|. 
			\end{aligned}
		\end{equation}
		Applying Lemma \ref{EstGatoux}, specifically, \eqref{EstXYZ} with $X= \nabla g_{N,s}$, $Y = \nabla u$ and $Z= \nabla v$, we find that 
		$$
		\begin{aligned}
			& \left| |\nabla g_{N,s}+\nabla u+t \nabla v|^{N-2} - |\nabla g_{N,s}|^{N-2} \right||\nabla g_{N,s} || \nabla v| \\
			& \qquad \le C_N (|\nabla g_{N,s}|^{N-2}|\nabla u||\nabla v| +|\nabla g_{N,s}|^{N-2}|\nabla v|^2 + |\nabla g_{N,s}||\nabla u|^{N-2} |\nabla v|+  |\nabla g_{N,s}| |\nabla v|^{N-1}).
		\end{aligned}
		$$
		By H\"older inequality and Lemma \ref{LemmaAppIntegrability} all terms in the RHS belong to $L^1(\R^N,dx)$. Similarly, we have that
		$$\begin{aligned}
			&	|\nabla g_{N,s}+\nabla u+t \nabla v|^{N-2} (|\nabla u| + |\nabla v|)|\nabla v|  \\ & \qquad \le C_N  \left(|\nabla g_{N,s}|^{N-2}|\nabla u| |\nabla v| + |\nabla g_{N,s}|^{N-2}|\nabla v|^2 + (|\nabla u|+|\nabla v|)^{N-1}|\nabla v| \right)\\
			& \qquad \le C_N\left( |\nabla g_{N,s}|^{N-2}|\nabla u| |\nabla v| + |\nabla g_{N,s}|^{N-2}|\nabla v|^2 + |\nabla u|^{N-1}|\nabla v|+|\nabla v|^N\right), 
		\end{aligned}
		$$ 
		with all the terms in $RHS$ in $L^1(\R^N,dx)$.

		\noindent
		Finally, to show that $P_{N,s}$ is Gâteaux differentiable on $\wmns$, we need to prove linearity and the continuity of $v\rightarrow \langle P'_{N,s}(u), v\rangle$ for any $u$, $v \in W_{\mu_{N,s}}(\mathbb{R}^N)$.
		The linearity is obvious, so let us focus on the continuity.\\
		
		\noindent
		In order to guarantee continuity, it is enough if the functional is bounded by $\|v\|_{\mu_{N,s}}$. Let us fix $u \in  W_{\mu_{N,s}}(\mathbb{R}^N)$, then, by taking into account  \eqref{contoLagrangePiuVariabili}, we have that for any $v \in  W_{\mu_{N,s}}(\mathbb{R}^N)$,
		\begin{align*}
			\left| \langle P'_{N,s}(u), v \rangle   \right| & = \bigg|N\int_{\mathbb{R}^N} \left[ \, |\nabla g_{N,s}+\nabla u|^{N-2}(\nabla g_{N,s}+\nabla u)  - |\nabla g_{N,s}|^{N-2}\nabla g_{N,s} \right] \cdot \nabla v \, dx \bigg| \\
			&\leq N \int_{\R^N }\bigg|  \,  |\nabla g_{N,s}+\nabla u|^{N-2} - |\nabla g_{N,s}|^{N-2} \bigg|   \left| \nabla g_{N,s} \right|  \left| \nabla v \right|  \\&\quad + C_N \left( |\nabla g_{N,s}|^{N-2}   \left| \nabla u \right|  \left| \nabla v \right| +  \left| \nabla u \right|^{N-1}  \left| \nabla v \right|   \right)\, dx  \\
			&\leq  C_N \int_{\R^N}\Bigg( \left|     \nabla g_{N,s}|^{N-2}   \left| \nabla u \right|  \left| \nabla v \right| + |\nabla g_{N,s} \right| \left| \nabla v \right|  \left| \nabla u   \right|^{N-2}  +  \left| \nabla u \right|^{N-1}  \left| \nabla v \right| \Bigg) dx ,
		\end{align*}
		where in the last inequality we have used Lemma \ref{EstGatoux}.

		\noindent
		By employing H\"older inequality twice, first with exponents $2$ and $2$ and then with $N/(N-1)$ and $N$, we get
		\begin{align*}
			\dis 
			& \int_{\mathbb{R}^N} \left( |\nabla g_{N,s}|^{N-2}   \left| \nabla u \right|  \left| \nabla v \right| +  \left| \nabla u \right|^{N-1}  \left| \nabla v \right|   \right) \, dx \\
			&\leq    \left( \int_{\mathbb{R}^N} |\nabla g_{N,s}|^{N-2}   \left| \nabla u \right|^2 \, dx      \right)^{\frac{1}{2}} \left( \int_{\mathbb{R}^N}  |\nabla g_{N,s}|^{N-2}   \left| \nabla v \right|^2 \, dx     \right)^{\frac{1}{2}} +  \lVert \nabla u \rVert_N^{N-1} \lVert \nabla v \rVert_N \\
			& \le \lVert  u \rVert_{\mu_{N,s}} \lVert  v \rVert_{\mu_{N,s}}+ \lVert  u \rVert_{\mu_{N,s}}^{N-1} \lVert  v \rVert_{\mu_{N,s}}.
		\end{align*}
		Using also Lemma \ref{LemmaAppIntegrability} we obtain that 
		\begin{align*}
			\left| \langle P'_{N,s}(u), v \rangle   \right| &\leq C \left( \|u\|_{\mu_{N,s}} + \|u\|_{\mu_{N,s}}^{N-1}+ \|u\|_{\mu_{N,s}}^{N-2}\right)\|v\|_{\mu_{N,s}}\leq C_{u,g_{N,s}}\|v\|_{\mu_{N,s}}.
		\end{align*}	
In order to show that $P_{N,s}$ is Fr\'echet differentiable, we prove the continuity of  the map  $P'_{N,s}: W_{\mu_{N,s}}(\mathbb{R}^N) \to \left[  W_{\mu_{N,s}}(\mathbb{R}^N) \right] ' $. Let  $u_n \in W_{\mu_{N,s}}(\mathbb{R}^N) $ such that $ u_n \to u$ in $W_{\mu_{N,s}}(\mathbb{R}^N) $, then
		\begin{align*}
			\left|\langle P'_{N,s}(u) -  P'_{N,s}(u_k) , v \rangle \right|
			\leq C_{N,s} \int_{\mathbb{R}^N}& \bigg| |\nabla g_{N,s}+\nabla u|^{N-2}(\nabla g_{N,s}+\nabla u)
			\\&- |\nabla g_{N,s}+\nabla u_k|^{N-2}(\nabla g_{N,s}+\nabla u_k) \bigg|  \left| \nabla v \right| \, dx.
		\end{align*}
		Let us bound this expression depending on just the norm of $u,u_n$ in $W_{\mu_{N,s}}(\mathbb{R}^N)$ for any $\|v\|_{\mu_{N,s}}\leq 1$. Applying \eqref{EstXY3} with  $X=\nabla g_{N,s} + \nabla u$ and $Y= \nabla u_n-\nabla u$, we have
		\begin{equation}\label{ineq1}
			\begin{aligned}
				&\left| |\nabla u_n+\nabla g_{N,s}|^{N-2}\nabla (u_n+g_{N,s})-	|\nabla u+\nabla g_{N,s}|^{N-2}\nabla (u+g_{N,s}) \right||\nabla v|
				\\& \qquad \le C_N |\nabla u_n- \nabla u|\left(|\nabla g_{N,s}+ \nabla u|^{N-2}+ |\nabla u_n - \nabla u|^{N-2}\right) |\nabla v|
				\\& \qquad\leq C_N\left( |\nabla g_{N,s}|^{N-2}|\nabla(u_n- u)|+ |\nabla u|^{N-2}|\nabla(u_n- u)|+ |\nabla(u_n- u)|^{N-1}  \right)|\nabla v|.
			\end{aligned}
		\end{equation}	
		Moreover, by using H\"older inequality, we have
		\begin{equation}\label{ineq3}
			\begin{aligned}
				\int_{\mathbb{R}^N} |\nabla g_{N,s}|^{N-2}|\nabla(u_n- u)||\nabla v| \, dx \leq& \left(\int_{\mathbb{R}^N} |\nabla g_{N,s}|^{N-2}|\nabla(u_n- u)|^2 \, dx \right)^{\frac{1}{2}}
				\\&\times \left(\int_{\mathbb{R}^N} |\nabla g_{N,s}|^{N-2}|\nabla v|^2 \, dx\right)^{\frac{1}{2}},
			\end{aligned}
		\end{equation}
		\begin{equation}\label{ineq4}
			\begin{aligned}
				\int_{\mathbb{R}^N} |\nabla u|^{N-2}|\nabla(u_n- u)||\nabla v| \, dx \leq& \left(\int_{\mathbb{R}^N}|\nabla u|^N \, dx \right)^{\frac{N-2}{N}} \left(\int_{\mathbb{R}^N}|\nabla (u_n-u)|^N \, dx \right)^{\frac{1}{N}}\\&\times \left(\int_{\mathbb{R}^N}|\nabla v|^N \, dx \right)^{\frac{1}{N}},
			\end{aligned}
		\end{equation}
		and
		\begin{align}\label{ineq2}
			\int_{\mathbb{R}^N} |\nabla(u_n- u)|^{N-1}|\nabla v| \, dx &\leq \left(\int_{\mathbb{R}^N} |\nabla(u_n- u)|^N \, dx \right)^{\frac{N-1}{N}}\left(\int_{\mathbb{R}^N} |\nabla v|^N \, dx \right)^{\frac{1}{N}}.
		\end{align} 
		As a consequence of \eqref{ineq1}, \eqref{ineq3}, \eqref{ineq4}, \eqref{ineq2} and the convergence of $u_n$ in $W_{\mu_{N,s}}(\mathbb{R}^N)$, for $ \| v\|_{\mu_{N,s}}\leq 1$, we have 
		\begin{align*}
			\left| \langle P'_{N,s}(u) -  P'_{N,s}(u_n) , v \rangle \right|\leq C_N(1+\|\nabla u\|_{N}^{N-2} ) \|u_n -u\|_{\mu_{N,s}} + \|u_n -u\|_{\mu_{N,s}}^{N-2} \rightarrow 0,
		\end{align*}
		as $n\to +\infty$.
		Then, $P_{N,s}$ is Fr\'echet differentiable.			
		\end{proof}
	
	\begin{proposition}\label{DiffExponential}
		The functional $\bar{E}_{N,s} : W_{\mu_{N,s}}(\mathbb{R}^N) \to \mathbb{R}$ defined as
		$$ \dis \bar{E}_{N,s}(u):=  \ln \left( \int_{\mathbb{R}^N} e^{u} \, d\mu_{N,s}   \right) $$
		
		\noindent
		is of class $C^1$ and for any $u,v \in \wmns$ we have
		$$ \dis  \langle \bar{E}_{N,s}'(u),  v \rangle = \frac{ \dis \int_{\mathbb{R}^N}e^u v \, d\mu_{N,s}}{ \dis \int_{\mathbb{R}^N}e^u \, d\mu_{N,s}}.$$
	\end{proposition}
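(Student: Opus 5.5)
The plan is to reduce everything to properties of the functional $E(u):=\int_{\mathbb{R}^N} e^u\,d\mu_{N,s}$. Since $\ln$ is smooth on $(0,\infty)$ and $E(u)>0$, the chain rule gives $\bar E_{N,s}\in C^1$ as soon as $E\in C^1$ with $\langle E'(u),v\rangle=\int_{\mathbb{R}^N} e^u v\,d\mu_{N,s}$. The basic tool is the exponential integrability already derived in the proof of Proposition \ref{inclusionLq}: for every $w\in\wmns$,
\[
\int_{\mathbb{R}^N} e^{|w|}\,d\mu_{N,s}\le e^{a_{N,s}\|w\|_{\mu_{N,s}}}\quad\text{whenever } \|w\|_{\mu_{N,s}}\le 1 .
\]
Combined with the Onofri inequality \eqref{MT-SingularityNewSpace} applied to $p|w|$ and the bound of Lemma \ref{Lemma 2.1}, it gives, for any $p\ge1$,
\[
\int_{\mathbb{R}^N} e^{p|w|}\,d\mu_{N,s}\le \exp\!\big(C(\|w\|_{\mu_{N,s}})\big)
\]
for a function $C(\cdot)$ that is bounded on bounded sets. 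Thus $e^{u}\in L^p(d\mu_{N,s})$ for every $p$, locally uniformly in $u$. Together with the embedding $\wmns\hookrightarrow L^q(d\mu_{N,s})$ of Proposition \ref{inclusionLq}, this makes $v\mapsto\int_{\mathbb{R}^N} e^u v\,d\mu_{N,s}$ a bounded linear functional, by Hölder's inequality with exponents $2,2$.

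For Fréchet differentiability at $u$, I write, for $\|v\|_{\mu_{N,s}}\le1$,
\[
E(u+v)-E(u)-\int_{\mathbb{R}^N} e^u v\,d\mu_{N,s}=\int_{\mathbb{R}^N} e^u\big(e^v-1-v\big)\,d\mu_{N,s}.
\]
I then use the pointwise bound $|e^v-1-v|\le \tfrac12 v^2e^{|v|}$ and Hölder's inequality with three exponents $(3,3,3)$. This yields a bound of the form
\[
\|e^u\|_{L^3(d\mu_{N,s})}\,\|v\|_{L^6(d\mu_{N,s})}^2\,\|e^{|v|}\|_{L^3(d\mu_{N,s})}\le C_u\|v\|_{\mu_{N,s}}^2 ,
\]
which is $o(\|v\|_{\mu_{N,s}})$.

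Continuity of $E'$ is handled the same way. For $u_n\to u$ and $\|v\|_{\mu_{N,s}}\le1$,
\[
\Big|\int_{\mathbb{R}^N}(e^{u_n}-e^u)v\,d\mu_{N,s}\Big|\le \|e^{u_n}-e^u\|_{L^2(d\mu_{N,s})}\,\|v\|_{L^2(d\mu_{N,s})},
\]
and I use $|e^{a}-e^{b}|\le|a-b|e^{|a|+|b|}$ followed by Hölder's inequality again. Here the factor $e^{|u_n|+|u|}$ is uniformly bounded in $L^4$ because $\|u_n\|_{\mu_{N,s}}$ is bounded, while $\|u_n-u\|_{L^4(d\mu_{N,s})}\to0$ by the embedding.

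The only delicate point is the uniform exponential integrability for large norms, since Proposition \ref{inclusionLq} is stated only for $\|u\|_{\mu_{N,s}}\le1$. I would obtain it directly from \eqref{MT-SingularityNewSpace} applied to $p|w|\in\wmns$, using \cite[Lemma A.4, Chapter 2]{KS}. Lemma \ref{Lemma 2.1} then bounds the Dirichlet-type term by
\[
C_N\big(p^N\|w\|_{\mu_{N,s}}^N+p^2\|w\|_{\mu_{N,s}}^2\big),
\]
and the linear term is bounded by $p\|w\|_{\mu_{N,s}}$; this is bounded on bounded sets, which is all that is needed.
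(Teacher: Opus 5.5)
Your proof is correct. It uses the same ingredients as the paper: the reduction to $E_{N,s}(u)=\int_{\mathbb{R}^N}e^u\,d\mu_{N,s}$, exponential integrability from \eqref{MT-SingularityNewSpace} combined with Lemma \ref{Lemma 2.1}, the embedding of Proposition \ref{inclusionLq}, and multi-exponent H\"older estimates. What differs is how the differentiability argument is organized.

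The paper proceeds in three steps:
\begin{enumerate}
\item it computes the Gâteaux derivative by differentiating $t\mapsto e^{u+tv}\mu_{N,s}$ under the integral sign;
\item it checks that this derivative is bounded in $v$;
\item it proves continuity of $u\mapsto E'_{N,s}(u)$ in the dual norm, using $e^{\max\{u_n,u\}}$ and H\"older with exponents $3,3,3$.
\end{enumerate}
It then tacitly invokes the general fact that a continuous Gâteaux derivative is a Fréchet derivative.

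You instead obtain Fréchet differentiability directly from the quadratic remainder bound
\[
\Big|\int_{\mathbb{R}^N} e^u\,(e^v-1-v)\,d\mu_{N,s}\Big|\le C_u\|v\|_{\mu_{N,s}}^2 ,
\]
and only afterwards check continuity of $E'$.

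Your route buys several things. It gives a quantitative $O(\|v\|_{\mu_{N,s}}^2)$ remainder and does not rely on the Gâteaux-to-Fréchet criterion. It also sidesteps the paper's differentiation under the integral sign. There the paper only verifies $e^u v\,\mu_{N,s}\in L^1$ at $t=0$, without giving a dominating function uniform in $t$. Such a function would be, for instance, $e^{u+|v|}|v|\mu_{N,s}$, whose integrability follows from the same exponential bounds.

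You also make explicit the estimate $\int_{\mathbb{R}^N}e^{p|w|}\,d\mu_{N,s}\le \exp\big(C(\|w\|_{\mu_{N,s}})\big)$, uniform on bounded sets. The paper uses this bound without stating it in \eqref{Bound1}, when it asserts that $\int_{\mathbb{R}^N}(e^{3u_n}+e^{3u})\,d\mu_{N,s}$ is bounded independently of $n$.

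The paper's version needs one fewer estimate, but only because it leans on the general criterion. Yours is fully self-contained.
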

	\begin{proof}
		It suffices to show that the functional $E_{N,s} : W_{\mu_{N,s}}(\mathbb{R}^N) \to \mathbb{R}$ defined as
		$$ \dis E_{N,s}(u):=  \int_{\mathbb{R}^N} e^{u} \, d\mu_{N,s}  $$
		
		\noindent
		is of class $C^1$ and for any $u,v \in \wmns$ we have
		$$ \dis  \langle E_{N,s}'(u),  v \rangle = \int_{\mathbb{R}^N}e^u v \, d\mu_{N,s}.$$
		
		\noindent
		Let us prove that $E_{N,s}$ is Gâteaux differentiable.
		First, we show that for any $u,v \in  W_{\mu_{N,s}}(\mathbb{R}^N)$,
		\begin{align*}
			\langle E'_{N,s}(u), v \rangle=\left.\frac{d}{dt} E(u+tv)\right|_{t=0} = \int_{\mathbb{R}^N}e^u v \, d\mu_{N,s}.
		\end{align*} 
		
		\noindent
		Since the function $t \mapsto e^{u+tv}\mu_{N,s}$ is $C^1$, then
		\[\left.\frac{d}{d t} e^{u+tv}\mu_{N,s}\right|_{t=0}=e^u v \, \mu_{N,s}.\]
		
		Moreover, due to Theorem \ref{inclusionLq}, by H\"older inequality $e^uv\mu_{N,s}\in L^1(\mathbb{R}^N,dx)$. In particular,
		\begin{equation}\label{BoundExpNorm}
			\begin{aligned}
				\int_{\mathbb{R}^N}e^u v \, d\mu_{N,s} =&	\int_{\mathbb{R}^N}e^u \mu_{N,s}^{1/2} v \mu_{N,s}^{1/2} \, dx\leq \left(\int_{\mathbb{R}^N} e^{2u}\, d\mu_{N,s}\right)^{1/2}\left(\int_{\mathbb{R}^N} v^2\, d\mu_{N,s}\right)^{1/2}\\&\leq C_{\|u\|_{\mu_{N,s}}} \|v\|_{\mu_{N,s}},
			\end{aligned}
		\end{equation}
		so the directional derivative of $E_{N,s}$, is well defined as 
		\begin{align*}
			\langle E'_{N,s}(u), v \rangle= \int_{\mathbb{R}^N}e^u v \, d\mu_{N,s}.
		\end{align*} 
		
		Furthermore, for any $u \in W_{\mu_{N,s}}(\mathbb{R}^N)$ the map $v\rightarrow \langle E'_{N,s}(u), v\rangle$ is linear and, due to \eqref{BoundExpNorm}, it is also continuous in $W_{\mu_{N,s}}(\mathbb{R}^N)$. So $E_{N,s}$ is Gâteaux differentiable on $\wmns$. 
		Regarding the Fr\'echet differentiability, consider  $u_n\in W_{\mu_{N,s}}(\mathbb{R}^N)$ such that $u_n\rightarrow u$ in $ W_{\mu_{N,s}}(\mathbb{R}^N)$, then owing to the mean value theorem, we have
		\begin{align*}
			\left|\langle E'_{N,s}(u_n)-E'_{N,s}(u), v \rangle\right|=&\left|\int_{\mathbb{R}^N} (e^{u_n}-e^u)v \, d\mu_{N,s} \right|\leq 
			\int_{\mathbb{R}^N} |e^{u_n}-e^u||v| \mu_{N,s}\, dx\\&
			\leq \int_{\mathbb{R}^N} e^{\max{\{u_n,u\}}}\mu_{N,s}^{\frac{1}{3}}|u_n-u|\mu_{N,s}^{\frac{1}{3}}|v| \mu_{N,s}^{\frac{1}{3}}\, dx
			\\&\leq \left(\int_{\mathbb{R}^N} e^{3\max{\{u_n,u\}} }\mu_{N,s}\, dx\right)^{\frac{1}{3}} \left(\int_{\mathbb{R}^N} |v|^3\mu_{N,s}\, dx\right)^{\frac{1}{3}}
			\\&\quad \times  \left(\int_{\mathbb{R}^N} |u_n-u|^3\mu_{N,s}\, dx\right)^{\frac{1}{3}},
		\end{align*}
		where in the last inequality we have taken into account the Hölder extended inequality with exponents $3$,  $3$ and $3$.
		Let us analyze each term of the product.\par 
		\noindent
		Since $u_n,u\in W_{\mu_{N,s}}(\mathbb{R}^N)$, we can apply Theorem \ref{densitystatementANDextension} to the first term of the product and due to the fact $u_n\rightarrow u$ in $W_{\mu_{N,s}}(\mathbb{R}^N)$, there exists some $C>0$ independent of $n$ such that 
		\begin{align}\label{Bound1}
			\int_{\mathbb{R}^N} e^{3\max{\{u_n,u\}} }\mu_{N,s}\, dx\leq\int_{\mathbb{R}^N} \left(e^{3u_n}+e^{3u}\right)\mu_{N,s}\, dx< C.
		\end{align}
		
		Moreover, from Lemma \ref{inclusionLq}, we deduce that the second term of the product is bounded by some quantity independent of $n$ and $v$, since $\|v\|_{W_{\mu_{N,s}}(\mathbb{R}^N)}\leq 1$,
		\begin{align}\label{Bound2}
			\int_{\mathbb{R}^N} |v|^3\mu_{N,s}\, dx\leq  e^{C_N\|v\|^a_{\mu_{N,s}}}\leq e^{C_N}.
		\end{align}
		
		Finally, using again Lemma \ref{Lemma 2.1},
		\begin{align}\label{convergence}
			\int_{\mathbb{R}^N} |u_n-u|^3\mu_{N,s}\, dx\rightarrow 0.
		\end{align}
		
		As a consequence, from \eqref{Bound1}, \eqref{Bound2} and \eqref{convergence},
		\begin{align*}
			\left|\langle E'_{N,s}(u_n)-E'_{N,s}(u), v \rangle\right|\leq  C e^{C_N} \left(\int_{\mathbb{R}^N} |u_n-u|^3\mu_{N,s}\, dx\right)^{\frac{1}{3}}\rightarrow 0,
		\end{align*}
		so Fr\'echet differentiability is obtained.
	\end{proof}
	
	\begin{proof}[Proof of Theorem \ref{Diff}]
		
		The proof is deduced from Propositions \ref{DiffHNs} and \ref{DiffExponential}, taking into account that the functional $K_{N,s} : W_{\mu_{N,s}}(\mathbb{R}^N) \to \mathbb{R}$ defined as
		$$ \dis K_{N,s}(u):= \int_{\mathbb{R}^N} u \, d\mu_{N,s} $$
		\noindent
		is of class $C^1$, so for any $u,v \in W_{\mu_{N,s}}(\mathbb{R}^N)$,
		$$ \langle K_{N,s}'(u),  v \rangle = \int_{\mathbb{R}^N} v \, d\mu_{N,s}.$$
	\end{proof}

	\noindent
	{\bf Acknowledgments.}\\
	The first, second and fourth authors are supported by INdAM-GNAMPA.
	The first and the fourth authors are supported by the INdAM-GNAMPA projects \textsl{``Metodi variazionali e topologici tra Fisica, Geometria e Scienze Applicate''} and \textsl{``Strutture Analitiche e Geometriche in PDEs: Regolarità, Fenomeni Critici e Dinamiche Complesse''}  (CUP E53C25002010001).
	The third author has been supported by Junta de Andaluc\'ia FQM-194, FQM-424 (Spain), Plan Propio de Investigaci\'on y Transferencia de la Universidad de Almer\'ia (2023), and grant PID2025-173281NB-I00, funded by MICIU/AEI/10.13039/501100011033 and ESF+.


\begin{thebibliography}{999}	
		

		
		\bibitem{SanAdi-2007} Adimurthi and  K. Sandeep, \textit{A singular Moser–Trudinger embedding and its applications}, NoDEA Nonlinear Differential Equations Appl. \textbf{13} (2007), 585--603.
		
		
		\bibitem{Bec-1993} W. Beckner, \textit{Sharp Sobolev inequalities on the sphere and the Moser–Trudinger inequality}, Ann. of Math. \textbf{138} (1993), 213--242.
		
		\bibitem{BCMsurvey} N. Borgia, S. Cingolani, and G. Mancini, \textit{New Analytical and Geometrical Aspects on Trudinger-Moser Type Inequality in 2D}, in: Singularities, Asymptotics, and Limiting Models, Springer INdAM Ser., vol. 64, Springer, Singapore, 2025, 235--254. 
		
		\bibitem{BorCinMan-2025} N. Borgia, S. Cingolani, and G. Mancini, \textit{On the equivalence between an Onofri-type inequality by Del Pino–Dolbeault and the sharp logarithmic Moser–Trudinger inequality}, Calc. Var. Partial Differential Equations \textbf{64} (2025), Paper No. 80, 22 pp.
		
		\bibitem{CALVCORR} V. Calvez and L. Corrias, \textit{The parabolic-parabolic Keller-Segel model in $\mathbb{R}^2$}, Commun. Math. Sci. \textbf{6} (2008), no. 2, 417--447.
		
		\bibitem{CarCha-1986} L. Carleson and A. Chang, \textit{On the existence of an extremal function for an inequality of J. Moser}, Bull. Sci. Math. \textbf{110} (1986), no. 2, 113--127.
		
		\bibitem{CirEspLi-2026} G. Ciraolo, P. Esposito, and X. Li, \textit{On the classification of solutions to a class of $N$-Liouville equations in $\mathbb{R}^N$}, preprint arXiv:2604.10050 (2026).
		
		\bibitem{CsaNguRoy-2021} G. Csat\'o, V. H. Nguyen, and P. Roy, \textit{Extremals for the singular Moser-Trudinger inequality via n-harmonic transplantation}, J. Differential Equations \textbf{270} (2021), 843--882.
		
		\bibitem{dPinDol-2013} M. Del Pino and J. Dolbeault, \textit{The Euclidean Onofri inequality in higher dimensions}, Int. Math. Res. Not. IMRN (2013), no. 15, 3600--3611.
		
		\bibitem{DolEstJan-2015} J. Dolbeault, M. J. Esteban, and G. Jankowiak, \textit{The Moser-Trudinger-Onofri inequality}, Chinese Ann. Math. Ser. B \textbf{36} (2015), 777--802.
		
		\bibitem{DET} J. Dolbeault, M. J. Esteban, and G. Tarantello, \textit{The role of Onofri type inequalities in the symmetry properties of extremals for Caffarelli-Kohn-Nirenberg inequalities, in two space dimensions}, Ann. Sc. Norm. Super. Pisa Cl. Sci. (5) \textbf{7} (2008), no. 2, 313--341.
		
		\bibitem{DonLamLu-2018} M. Dong, N. Lam, and G. Lu, \textit{Sharp weighted Trudinger–Moser and Caffarelli–Kohn–Nirenberg inequalities and their extremal functions}, Nonlinear Anal. \textbf{173} (2018), 75--98.
		
		\bibitem{Esp-2018} P. Esposito, \textit{A classification result for the quasi-linear Liouville equation}, Ann. Inst. H. Poincar\'e Anal. Non Lin\'eaire \textbf{35} (2018), 781--801.
		
		\bibitem{Esp-2021} P. Esposito, \textit{Isolated singularities for the n-Liouville equation}, Calc. Var. Partial Differential Equations \textbf{60} (2021), Paper No. 137, 17 pp.
		
		\bibitem{GAJZAK} H. Gajewski and K. Zacharias, \textit{Global behaviour of a reaction-diffusion system modelling chemotaxis}, Math. Nachr. \textbf{195} (1998), 77--114.

		
\bibitem{IulManRoy-2017} S. Iula,  G. Mancini, \textit{Extremal functions for singular Moser–Trudinger embeddings}, Nonlinear Analysis \textbf{156}, (2017), 215-248.
		
		\bibitem{Kes-2006} S. Kesavan, \textit{Symmetrization \& Applications}, Series in Analysis, vol. 3, World Scientific Publishing Co. Pte. Ltd., Hackensack, NJ, 2006.
		
		\bibitem{KS} D. Kinderlehrer and G. Stampacchia, \textit{An introduction to variational inequalities and their applications}, Society for Industrial and Applied Mathematics, 2000.
		
		\bibitem{LamLu-2017} N. Lam and G. Lu, \textit{Sharp constants and optimizers for a class of Caffarelli-Kohn-Nirenberg inequalities}, Adv. Nonlinear Stud. \textbf{17} (2017), no. 3, 457--480.
		
		\bibitem{LamLu-2018} N. Lam and G. Lu, \textit{Weighted Moser–Onofri–Beckner and Logarithmic Sobolev Inequalities}, J. Geom. Anal. \textbf{28} (2018), no. 2, 1687--1715.
		
		
		\bibitem{LiebLoss} E. H. Lieb and M. Loss, \textit{Analysis}, Graduate Studies in Mathematics, vol. 14, American Mathematical Society, Providence, RI, second ed., 2001.
		
		
		\bibitem{Mos-1971} J. Moser, \textit{A sharp form of an inequality by N. Trudinger}, Indiana Univ. Math. J. \textbf{20} (1970/71), no. 11, 1077--1092.
		
		\bibitem{Mos-1973} J. Moser, \textit{On a nonlinear problem in differential geometry}, in: Dynamical Systems (M. M. Peixoto, ed.), Academic Press, New York, 1973, 273--280.
		
		\bibitem{Ono-1982} E. Onofri, \textit{On the positivity of the effective action in a theory of random surfaces}, Comm. Math. Phys. \textbf{86} (1982), no. 3, 321--326.
		
		\bibitem{PraTar-2001} J. Prajapat and G. Tarantello, \textit{On a class of elliptic problems in $\mathbb{R}^2$: symmetry and uniqueness results}, Proc. Roy. Soc. Edinburgh Sect. A \textbf{131} (2001), 967--985.

		
		\bibitem{Tru} N. S. Trudinger, \textit{On imbeddings into Orlicz spaces and some applications}, J. Math. Mech. \textbf{17} (1967), no. 5, 473--483.	
		
		
		
	\end{thebibliography}
\end{document}